\numberwithin{equation}{section}
\newtheorem{theorem}{Theorem}[section]
\newtheorem{lemma}[theorem]{Lemma}
\newtheorem{proposition}[theorem]{Proposition}
\newtheorem{observation}[theorem]{Observation}
\newtheorem{notation}[theorem]{Notation}
\newtheorem{example}[theorem]{Example}
\newtheorem*{namedthm}{\namedthmname}
\newcounter{namedthm}
\newenvironment{named}[1]
  {\def\namedthmname{#1}%
   \refstepcounter{namedthm}%
   \namedthm\def\@currentlabel{#1}}
  {\endnamedthm}
\theoremstyle{definition} 
\newtheorem{definition}[theorem]{Definition}
\newtheorem{remark}[theorem]{Remark}
\newcommand{\act}{\curvearrowright}
\DeclareMathOperator{\Aff}{Aff}
\newcommand{\bP}{\mathbf{P}}
\newcommand{\al}{\alpha}
\newcommand{\cA}{\mathcal{A}}
\newcommand{\cB}{\mathcal{B}}
\newcommand{\cC}{\mathcal C}
\newcommand{\fC}{\mathfrak C}
\newcommand{\cD}{\mathcal D}
\newcommand{\cF}{\mathcal F}
\DeclareMathOperator{\Frac}{Frac}
\DeclareMathOperator{\FS}{FS}
\newcommand{\cG}{\mathcal G}
\DeclareMathOperator{\Gr}{Gr}
\newcommand{\ga}{\gamma}
\newcommand{\Ga}{\Gamma}
\DeclareMathOperator{\Homeo}{Homeo}
\DeclareMathOperator{\id}{id}
\newcommand{\into}{\hookrightarrow}
\newcommand{\la}{\langle}
\DeclareMathOperator{\Leaf}{Leaf}
\newcommand{\N}{\mathbf{N}}
\newcommand{\onto}{\twoheadrightarrow}
\newcommand{\ot}{\otimes}
\newcommand{\ov}{\overline}
\DeclareMathOperator{\PSL}{\textnormal{PSL}}
\newcommand{\PL}{\textnormal{PL}}
\newcommand{\PP}{\textnormal{PP}}
\newcommand{\Q}{\mathbf{Q}}
\newcommand{\fQ}{\mathfrak{Q}}
\newcommand{\R}{\mathbf{R}}
\newcommand{\ra}{\rangle}
\newcommand{\bS}{\mathbf{S}}
\DeclareMathOperator{\SL}{\textnormal{SL}}
\DeclareMathOperator{\supp}{supp}
\newcommand{\fT}{\mathfrak T}
\newcommand{\ti}{\tilde}
\newcommand{\Z}{\mathbf{Z}}
\newcommand{\0}{{\tt0}}
\newcommand{\1}{{\tt1}}
\newcommand{\PermutationA}{
	\begin{tikzpicture}
		\draw[thick] (1,1) -- (0,2);
		\draw[thick] (0,1) -- (2,2);
		\draw[thick] (2,1) -- (1,2);
		\draw[thick] (0,0) -- (1,1);
		\draw[thick] (1,0) -- (0,1);
		\draw[thick] (2,0) -- (2,1);
		\draw[fill=black] (0,0) circle [radius=0.06];
		\draw[fill=black] (1,0) circle [radius=0.06];
		\draw[fill=black] (2,0) circle [radius=0.06];
		\draw[fill=black] (0,1) circle [radius=0.06];
		\draw[fill=black] (1,1) circle [radius=0.06];
		\draw[fill=black] (2,1) circle [radius=0.06];
		\draw[fill=black] (0,2) circle [radius=0.06];
		\draw[fill=black] (1,2) circle [radius=0.06];
		\draw[fill=black] (2,2) circle [radius=0.06];
	\end{tikzpicture}
}
\newcommand{\PermutationB}{
	\begin{tikzpicture}
		\draw[thick] (0,0) -- (0,1);
		\draw[thick] (1,0) -- (2,1);
		\draw[thick] (2,0) -- (1,1);
		\draw[fill=black] (0,0) circle [radius=0.06];
		\draw[fill=black] (1,0) circle [radius=0.06];
		\draw[fill=black] (2,0) circle [radius=0.06];
		\draw[fill=black] (0,1) circle [radius=0.06];
		\draw[fill=black] (1,1) circle [radius=0.06];
		\draw[fill=black] (2,1) circle [radius=0.06];
	\end{tikzpicture}
}
\newcommand{\LeftVine}{
\begin{tikzpicture}
\draw[thick] (3,0) -- (2.5,0.5);
\draw[thick] (3,0) -- (3.5,0.5);
\draw[thick] (2.5,0.5) -- (2,1);
\draw[thick] (2.5,0.5) -- (3,1);
\draw[thick] (2,1) -- (1.5,1.5);
\draw[thick] (2,1) -- (2.5,1.5);
\draw[fill=black] (2.5,0.5) circle [radius=0.05];
\draw[fill=black] (3,1) circle [radius=0.05];
\draw[fill=black] (1.5,1.5) circle [radius=0.05];
\draw[fill=black] (2.5,1.5) circle [radius=0.05];
\draw[fill=black] (3,0) circle [radius=0.05];
\draw[fill=black] (3.5,0.5) circle [radius=0.05];
\draw[fill=black] (2,1) circle [radius=0.05];
\end{tikzpicture}
}
\newcommand{\RightVine}{
\begin{tikzpicture}
\draw[thick] (3,0) -- (2.5,0.5);
\draw[thick] (3,0) -- (3.5,0.5);
\draw[thick] (3.5,0.5) -- (3,1);
\draw[thick] (3.5,0.5) -- (4,1);
\draw[thick] (4,1) -- (3.5,1.5);
\draw[thick] (4,1) -- (4.5,1.5);
\draw[fill=black] (2.5,0.5) circle [radius=0.05];
\draw[fill=black] (3,1) circle [radius=0.05];
\draw[fill=black] (3.5,1.5) circle [radius=0.05];
\draw[fill=black] (4.5,1.5) circle [radius=0.05];
\draw[fill=black] (3,0) circle [radius=0.05];
\draw[fill=black] (3.5,0.5) circle [radius=0.05];
\draw[fill=black] (4,1) circle [radius=0.05];
\end{tikzpicture}
}
\newcommand{\Cleary}{
\begin{tikzpicture}
\draw[thick] (0+3,0) -- (-0.5+3,0.75);
\draw[thick] (0+3,0) -- (0.5+3,0.75);
\draw[thick] (0.5+3,0.75) -- (1+3,1.5);
\draw[thick] (0.5+3,0.75) -- (0+3,1.5);
\draw[black,fill=white,thick] (0+3,0) circle [radius=0.25] node {$a$};
\draw[black,fill=white,thick] (0.5+3,0.75) circle [radius=0.25] node {$a$};
\draw[fill=black] (1+3,1.5) circle [radius=0.06];
\draw[fill=black] (0+3,1.5) circle [radius=0.06];
\draw[fill=black] (-0.5+3,0.75) circle [radius=0.06];
\node at (1.5,0) {$\sim$};
\draw[thick] (0,0) -- (-0.5,0.75);
\draw[thick] (0,0) -- (0.5,0.75);
\draw[thick] (-0.5,0.75) -- (-1,1.5);
\draw[thick] (-0.5,0.75) -- (0,1.5);
\draw[black,fill=white,thick] (0,0) circle [radius=0.25] node {$b$};
\draw[black,fill=white,thick] (-0.5,0.75) circle [radius=0.25] node {$b$};
\draw[fill=black] (-1,1.5) circle [radius=0.06];
\draw[fill=black] (0,1.5) circle [radius=0.06];
\draw[fill=black] (0.5,0.75) circle [radius=0.06];
\end{tikzpicture}  
}
\newcommand{\ForestA}{
\begin{tikzpicture}
\draw[fill=black] (0,0) circle [radius=0.05];
\draw[thick] (0,0) -- (-0.5,0.5);
\draw[thick] (0,0) -- (0.5,0.5);
\draw[fill=black] (-0.5,0.5) circle [radius=0.05];
\draw[thick] (1,-0.2) -- (1,0.5);
\draw[fill=black] (1,-0.2) circle [radius=0.05];
\draw[fill=black] (1,0.5) circle [radius=0.05];
\draw[fill=black] (0.5,0.5) circle [radius=0.05];
\draw[black,fill=white,thick] (0,0) circle [radius=0.25] node {$a$};
\end{tikzpicture}
}
\newcommand{\ActionOne}{
\begin{tikzpicture}[scale=0.8]
		\draw[thick] (0,0) -- (0.7,0.75);
		\draw[thick] (0,0) -- (-0.7,0.75);
		\draw[black,fill=white,thick] (0,0) circle [radius=0.25] node {$b$};
		\draw[fill=black] (-0.7,0.75) circle [radius=0.06];
		\draw[fill=black] (0.7,0.75) circle [radius=0.06];
		\draw[thick] (-0.7,0.75) -- (0.7,1.75);
		\draw[thick] (0.7,0.75) -- (-0.7,1.75);
		\draw[thick] (0.7,1.75) -- (0,2.5);
		\draw[thick] (-0.7,1.75) -- (0,2.5);
        \draw[thick] (0,2.5)--(0,3.25);
		\draw[black,fill=white,thick] (0,2.5) circle [radius=0.25] node {$a$};
		\draw[fill=black] (0.7,1.75) circle [radius=0.06];
		\draw[fill=black] (-0.7,1.75) circle [radius=0.06];
        \draw[thick] (0,3.25)--(-0.7,4);
        \draw[thick] (0,3.25)--(0.7,4);
        \draw[black,fill=white,thick] (0,3.25) circle [radius=0.25] node {$a$};
        \draw[fill=black] (0.7,4) circle [radius=0.06];
		\draw[fill=black] (-0.7,4) circle [radius=0.06];
        \node at (-0.7,4.25) {$x$};
	\end{tikzpicture}
}
\newcommand{\ActionTwo}{
\begin{tikzpicture}[scale=0.8]
		\draw[thick] (0,0) -- (0.7,0.75);
		\draw[thick] (0,0) -- (-0.7,0.75);
		\draw[black,fill=white,thick] (0,0) circle [radius=0.25] node {$b$};
		\draw[fill=black] (-0.7,0.75) circle [radius=0.06];
		\draw[fill=black] (0.7,0.75) circle [radius=0.06];
		\draw[thick] (-0.7,0.75) -- (0.7,1.75);
		\draw[thick] (0.7,0.75) -- (-0.7,1.75);
        \draw[fill=black] (-0.7,1.75) circle [radius=0.06];
		\draw[fill=black] (0.7,1.75) circle [radius=0.06];
        \node at (-0.7,2) {$x$};
	\end{tikzpicture}
}
\newcommand{\ActionThree}{
\begin{tikzpicture}[scale=0.8]
		\draw[thick] (0,0) -- (0.7,0.75);
		\draw[thick] (0,0) -- (-0.7,0.75);
		\draw[black,fill=white,thick] (0,0) circle [radius=0.25] node {$b$};
		\draw[fill=black] (-0.7,0.75) circle [radius=0.06];
		\draw[fill=black] (0.7,0.75) circle [radius=0.06];
        \node at (0.7,1) {$x$};
	\end{tikzpicture}
}
\newcommand{\FigD}{
\begin{tikzpicture}
\draw[fill=black] (0,0) circle [radius=0.05];
\draw[thick] (0,0) -- (-0.5,0.5);
\draw[thick] (0,0) -- (0.5,0.5);
\draw[fill=black] (-0.5,0.5) circle [radius=0.05];
\draw[thick] (-0.5,0.5) -- (-1,1);
\draw[thick] (-0.5,0.5) -- (0,1);
\draw[fill=black] (-1,1) circle [radius=0.05];
\draw[fill=black] (0,1) circle [radius=0.05];
\draw[fill=black] (0.5,0.5) circle [radius=0.05];
\draw[fill=black] (2,0) circle [radius=0.05];
\draw[thick] (2,0) -- (1.5,0.5);
\draw[thick] (2,0) -- (2.5,0.5);
\draw[fill=black] (1.5,0.5) circle [radius=0.05];
\draw[fill=black] (2.5,0.5) circle [radius=0.05];
\draw[fill=black] (4,-0.125) circle [radius=0.05];
\draw[fill=black] (4,0.5) circle [radius=0.05];
\draw[thick] (4,-0.125) -- (4,0.5);
\draw[black,fill=white,thick] (0,0) circle [radius=0.25] node {$a$};
\draw[black,fill=white,thick] (-0.5,0.5) circle [radius=0.25] node {$b$};
\draw[black,fill=white,thick] (2,0) circle [radius=0.25] node {$b$};
\end{tikzpicture}
}
\newcommand{\Composition}{
\begin{tikzpicture}
\draw[fill=black] (0,0) circle [radius=0.05];
\draw[thick] (0,0) -- (-0.5,0.5);
\draw[thick] (0,0) -- (0.5,0.5);
\draw[fill=black] (-0.5,0.5) circle [radius=0.05];
\draw[thick] (1,-0.2) -- (1,0.5);
\draw[thick] (1.5,-0.2) -- (1.5,0.5);
\draw[fill=black] (1,-0.2) circle [radius=0.05];
\draw[fill=black] (1,0.5) circle [radius=0.05];
\draw[fill=black] (1.5,-0.2) circle [radius=0.05];
\draw[fill=black] (1.5,0.5) circle [radius=0.05];
\draw[fill=black] (0.5,0.5) circle [radius=0.05];
\draw[black,fill=white,thick] (0,0) circle [radius=0.25] node {$a$};
\node at (2,0.15) {$\circ$};
\draw[thick] (2.5,-0.2) -- (2.5,0.5);
\draw[thick] (3,-0.2) -- (3,0.5);
\draw[fill=black] (2.5,-0.2) circle [radius=0.05];
\draw[fill=black] (2.5,0.5) circle [radius=0.05];
\draw[fill=black] (3,-0.2) circle [radius=0.05];
\draw[fill=black] (3,0.5) circle [radius=0.05];
\draw[fill=black] (4,0) circle [radius=0.05];
\draw[thick] (4,0) -- (3.5,0.5);
\draw[thick] (4,0) -- (4.5,0.5);
\draw[fill=black] (3.5,0.5) circle [radius=0.05];
\draw[fill=black] (4.5,0.5) circle [radius=0.05];
\draw[black,fill=white,thick] (4,0) circle [radius=0.25] node {$b$};
\draw[thick] (5,-0.2) -- (5,0.5);
\draw[fill=black] (5,-0.2) circle [radius=0.05];
\draw[fill=black] (5,0.5) circle [radius=0.05];
\node at (5.625,0.1) {$=$};
\draw[fill=black] (6.5,0) circle [radius=0.05];
\draw[thick] (6.5,0) -- (6.25,0.5);
\draw[thick] (6.5,0) -- (6.75,0.5);
\draw[thick] (7.5,-0.2) -- (7.5,0.5);
\draw[fill=black] (7.5,-0.2) circle [radius=0.05];
\draw[fill=black] (6.25,0.5) circle [radius=0.05];
\draw[fill=black] (6.75,0.5) circle [radius=0.05];
\draw[black,fill=white,thick] (6.5,0) circle [radius=0.25] node {$a$};
\draw[thick] (7.5,0.5) -- (7.25,1);
\draw[thick] (7.5,0.5) -- (7.75,1);
\draw[thick] (6.25,0.5) -- (6.25,1);
\draw[thick] (6.75,0.5) -- (6.75,1);
\draw[fill=black] (6.25,1) circle [radius=0.05];
\draw[fill=black] (6.75,1) circle [radius=0.05];
\draw[fill=black] (7.25,1) circle [radius=0.05];
\draw[fill=black] (7.75,1) circle [radius=0.05];
\draw[black,fill=white,thick] (7.5,0.5) circle [radius=0.25] node {$b$};
\draw[thick] (8.5,-0.2) -- (8.5,0.5);
\draw[thick] (8.5,0.5) -- (8.5,1);
\draw[fill=black] (8.5,-0.2) circle [radius=0.05];
\draw[fill=black] (8.5,0.5) circle [radius=0.05];
\draw[fill=black] (8.5,1) circle [radius=0.05];
\node at (9.125,0.1) {$=$};
\draw[fill=black] (10,0) circle [radius=0.05];
\draw[thick] (10,0) -- (9.75,0.5);
\draw[thick] (10,0) -- (10.25,0.5);
\draw[fill=black] (9.75,0.5) circle [radius=0.05];
\draw[fill=black] (10.25,0.5) circle [radius=0.05];
\draw[black,fill=white,thick] (10,0) circle [radius=0.25] node {$a$};
\draw[thick] (11,0) -- (10.75,0.5);
\draw[thick] (11,0) -- (11.25,0.5);
\draw[fill=black] (10.75,0.5) circle [radius=0.05];
\draw[fill=black] (11.25,0.5) circle [radius=0.05];
\draw[black,fill=white,thick] (11,0) circle [radius=0.25] node {$b$};
\draw[thick] (11.75,-0.2) -- (11.75,0.5);
\draw[fill=black] (11.75,0.5) circle [radius=0.05];
\draw[fill=black] (11.75,-0.2) circle [radius=0.05];
\end{tikzpicture}
}
\newcommand{\TensorProd}{
\begin{tikzpicture}
\draw[fill=black] (0,0) circle [radius=0.05];
\draw[thick] (0,0) -- (-0.5,0.5);
\draw[thick] (0,0) -- (0.5,0.5);
\draw[fill=black] (-0.5,0.5) circle [radius=0.05];
\draw[thick] (1,-0.2) -- (1,0.5);
\draw[fill=black] (1,-0.2) circle [radius=0.05];
\draw[fill=black] (1,0.5) circle [radius=0.05];
\draw[fill=black] (0.5,0.5) circle [radius=0.05];
\draw[black,fill=white,thick] (0,0) circle [radius=0.25] node {$a$};
\node at (1.75,0.15) {$\ot$};
\draw[thick] (2.5,-0.2) -- (2.5,0.5);
\draw[thick] (3,-0.2) -- (3,0.5);
\draw[fill=black] (2.5,-0.2) circle [radius=0.05];
\draw[fill=black] (2.5,0.5) circle [radius=0.05];
\draw[fill=black] (3,-0.2) circle [radius=0.05];
\draw[fill=black] (3,0.5) circle [radius=0.05];
\draw[fill=black] (4,0) circle [radius=0.05];
\draw[thick] (4,0) -- (3.5,0.5);
\draw[thick] (4,0) -- (4.5,0.5);
\draw[fill=black] (3.5,0.5) circle [radius=0.05];
\draw[fill=black] (4.5,0.5) circle [radius=0.05];
\draw[black,fill=white,thick] (4,0) circle [radius=0.25] node {$b$};
\node at (5,0.1) {$=$};
\draw[thick] (6,0) -- (5.5,0.5);
\draw[thick] (6,0) -- (6.5,0.5);
\draw[fill=black] (5.5,0.5) circle [radius=0.05];
\draw[fill=black] (6.5,0.5) circle [radius=0.05];
\draw[thick] (7,-0.2) -- (7,0.5);
\draw[thick] (7.5,-0.2) -- (7.5,0.5);
\draw[thick] (8,-0.2) -- (8,0.5);
\draw[fill=black] (7,-0.2) circle [radius=0.05];
\draw[fill=black] (7,0.5) circle [radius=0.05];
\draw[fill=black] (7.5,-0.2) circle [radius=0.05];
\draw[fill=black] (7.5,0.5) circle [radius=0.05];
\draw[fill=black] (8,-0.2) circle [radius=0.05];
\draw[fill=black] (8,0.5) circle [radius=0.05];
\draw[black,fill=white,thick] (6,0) circle [radius=0.25] node {$a$};
\draw[thick] (9,0) -- (8.5,0.5);
\draw[thick] (9,0) -- (9.5,0.5);
\draw[fill=black] (8.5,0.5) circle [radius=0.05];
\draw[fill=black] (9.5,0.5) circle [radius=0.05];
\draw[black,fill=white,thick] (9,0) circle [radius=0.25] node {$b$};
\end{tikzpicture}
}
\newcommand{\SkeinRelationThree}{
\begin{tikzpicture}
\draw[fill=black] (0,0) circle [radius=0.05];
\draw[thick] (0,0) -- (-0.5,0.5);
\draw[thick] (0,0) -- (0.5,0.5);
\draw[thick] (-0.5,0.5) -- (-0.75,1);
\draw[thick] (-0.5,0.5) -- (-0.25,1);
\draw[thick] (0.5,0.5) -- (0.75,1);
\draw[thick] (0.5,0.5) -- (0.25,1);
\draw[fill=black] (-0.75,1) circle [radius=0.05];
\draw[fill=black] (-0.25,1) circle [radius=0.05];
\draw[fill=black] (0.75,1) circle [radius=0.05];
\draw[fill=black] (0.25,1) circle [radius=0.05];
\draw[black,fill=white,thick] (0,0) circle [radius=0.25] node {$a$};
\draw[black,fill=white,thick] (-0.5,0.5) circle [radius=0.25] node {$a$};
\draw[black,fill=white,thick] (0.5,0.5) circle [radius=0.25] node {$a$};
\node at (1.5,0) {$\sim$};
\draw[thick] (3,0) -- (2.5,0.5);
\draw[thick] (3,0) -- (3.5,0.5);
\draw[thick] (3.5,0.5) -- (3,1);
\draw[thick] (3.5,0.5) -- (4,1);
\draw[thick] (4,1) -- (3.5,1.5);
\draw[thick] (4,1) -- (4.5,1.5);
\draw[fill=black] (2.5,0.5) circle [radius=0.05];
\draw[fill=black] (3,1) circle [radius=0.05];
\draw[fill=black] (3.5,1.5) circle [radius=0.05];
\draw[fill=black] (4.5,1.5) circle [radius=0.05];
\draw[black,fill=white,thick] (3,0) circle [radius=0.25] node {$b$};
\draw[black,fill=white,thick] (3.5,0.5) circle [radius=0.25] node {$b$};
\draw[black,fill=white,thick] (4,1) circle [radius=0.25] node {$b$};
\end{tikzpicture}
}
\newcommand{\SkeinRelationFour}{
\begin{tikzpicture}
\draw[fill=black] (0,0) circle [radius=0.05];
\draw[thick] (0,0) -- (-0.5,0.5);
\draw[thick] (0,0) -- (0.5,0.5);
\draw[thick] (-0.5,0.5) -- (-0.875,1.125);
\draw[thick] (-0.5,0.5) -- (-0.25,1);
\draw[thick] (0.5,0.5) -- (0.75,1);
\draw[thick] (0.5,0.5) -- (0.25,1);
\draw[thick] (-0.875,1.125) -- (-1.125,1.625);
\draw[thick] (-0.875,1.125) -- (-0.625,1.625);
\draw[fill=black] (-1.125,1.625) circle [radius=0.05];
\draw[fill=black] (-0.625,1.625) circle [radius=0.05];
\draw[fill=black] (-0.25,1) circle [radius=0.05];
\draw[fill=black] (0.75,1) circle [radius=0.05];
\draw[fill=black] (0.25,1) circle [radius=0.05];
\draw[black,fill=white,thick] (0,0) circle [radius=0.25] node {$a$};
\draw[black,fill=white,thick] (-0.5,0.5) circle [radius=0.25] node {$a$};
\draw[black,fill=white,thick] (0.5,0.5) circle [radius=0.25] node {$a$};
\draw[black,fill=white,thick] (-0.875,1.125) circle [radius=0.25] node {$a$};
\node at (1.5,0) {$\sim$};
\draw[thick] (3,0) -- (2.5,0.5);
\draw[thick] (3,0) -- (3.5,0.5);
\draw[thick] (3.5,0.5) -- (3,1);
\draw[thick] (3.5,0.5) -- (4,1);
\draw[thick] (4,1) -- (3.5,1.5);
\draw[thick] (4,1) -- (4.5,1.5);
\draw[thick] (4.5,1.5) -- (4,2);
\draw[thick] (4.5,1.5) -- (5,2);
\draw[fill=black] (2.5,0.5) circle [radius=0.05];
\draw[fill=black] (3,1) circle [radius=0.05];
\draw[fill=black] (3.5,1.5) circle [radius=0.05];
\draw[fill=black] (4,2) circle [radius=0.05];
\draw[fill=black] (5,2) circle [radius=0.05];
\draw[black,fill=white,thick] (3,0) circle [radius=0.25] node {$b$};
\draw[black,fill=white,thick] (3.5,0.5) circle [radius=0.25] node {$b$};
\draw[black,fill=white,thick] (4,1) circle [radius=0.25] node {$b$};
\draw[black,fill=white,thick] (4.5,1.5) circle [radius=0.25] node {$b$};
\end{tikzpicture}
}
\newcommand{\SkeinRelationFive}{
\begin{tikzpicture}
\draw[fill=black] (0,0) circle [radius=0.05];
\draw[thick] (0,0) -- (-0.5,0.5);
\draw[thick] (0,0) -- (0.5,0.5);
\draw[thick] (-0.5,0.5) -- (-0.875,1.125);
\draw[thick] (-0.5,0.5) -- (-0.25,1);
\draw[thick] (0.5,0.5) -- (0.75,1);
\draw[thick] (0.5,0.5) -- (0.25,1);
\draw[thick] (-0.875,1.125) -- (-1.125,1.75);
\draw[thick] (-0.875,1.125) -- (-0.625,1.625);
\draw[thick] (-1.125,1.75) -- (-1.375,2.25);
\draw[thick] (-1.125,1.75) -- (-0.875,2.25);
\draw[fill=black] (-1.375,2.25) circle [radius=0.05];
\draw[fill=black] (-0.875,2.25) circle [radius=0.05];
\draw[fill=black] (-0.625,1.625) circle [radius=0.05];
\draw[fill=black] (-0.25,1) circle [radius=0.05];
\draw[fill=black] (0.75,1) circle [radius=0.05];
\draw[fill=black] (0.25,1) circle [radius=0.05];
\draw[black,fill=white,thick] (0,0) circle [radius=0.25] node {$a$};
\draw[black,fill=white,thick] (-0.5,0.5) circle [radius=0.25] node {$a$};
\draw[black,fill=white,thick] (0.5,0.5) circle [radius=0.25] node {$a$};
\draw[black,fill=white,thick] (-0.875,1.125) circle [radius=0.25] node {$a$};
\draw[black,fill=white,thick] (-1.125,1.75) circle [radius=0.25] node {$a$};
\node at (1.5,0) {$\sim$};
\draw[thick] (3,0) -- (2.5,0.5);
\draw[thick] (3,0) -- (3.5,0.5);
\draw[thick] (3.5,0.5) -- (3,1);
\draw[thick] (3.5,0.5) -- (4,1);
\draw[thick] (4,1) -- (3.5,1.5);
\draw[thick] (4,1) -- (4.5,1.5);
\draw[thick] (4.5,1.5) -- (4,2);
\draw[thick] (4.5,1.5) -- (5,2);
\draw[thick] (5,2) -- (4.5,2.5);
\draw[thick] (5,2) -- (5.5,2.5);
\draw[fill=black] (2.5,0.5) circle [radius=0.05];
\draw[fill=black] (3,1) circle [radius=0.05];
\draw[fill=black] (3.5,1.5) circle [radius=0.05];
\draw[fill=black] (4,2) circle [radius=0.05];
\draw[fill=black] (4.5,2.5) circle [radius=0.05];
\draw[fill=black] (5.5,2.5) circle [radius=0.05];
\draw[black,fill=white,thick] (3,0) circle [radius=0.25] node {$b$};
\draw[black,fill=white,thick] (3.5,0.5) circle [radius=0.25] node {$b$};
\draw[black,fill=white,thick] (4,1) circle [radius=0.25] node {$b$};
\draw[black,fill=white,thick] (4.5,1.5) circle [radius=0.25] node {$b$};
\draw[black,fill=white,thick] (5,2) circle [radius=0.25] node {$b$};
\end{tikzpicture}
}
\newcommand{\Cabelling}{
\begin{tikzpicture}
\draw[thick] (1,-1) -- (0,0);
\draw[thick] (0,-1) -- (1,0);
\draw[thick] (1.75,-1) -- (1.75,0);
\draw[fill=black] (0,-1) circle [radius=0.05];
\draw[fill=black] (1,-1) circle [radius=0.05];
\draw[fill=black] (1.75,-1) circle [radius=0.05];
\draw[fill=black] (0,0) circle [radius=0.05];
\draw[thick] (0,0) -- (-0.5,0.5);
\draw[thick] (0,0) -- (0.5,0.5);
\draw[fill=black] (-0.5,0.5) circle [radius=0.05];
\draw[thick] (1,0) -- (1,0.5);
\draw[thick] (1.75,0) -- (1.75,0.5);
\draw[fill=black] (1,0) circle [radius=0.05];
\draw[fill=black] (1,0.5) circle [radius=0.05];
\draw[fill=black] (1.75,0) circle [radius=0.05];
\draw[fill=black] (1.75,0.5) circle [radius=0.05];
\draw[fill=black] (0.5,0.5) circle [radius=0.05];
\draw[black,fill=white,thick] (0,0) circle [radius=0.25] node {$a$};
\node at (2.5,-0.75) {$=$};
\draw[thick] (3.25,-1) -- (3.25,-0.5);
\draw[thick] (4.25,-1) -- (3.75,-0.5);
\draw[thick] (4.25,-1) -- (4.75,-0.5);
\draw[thick] (5.25,-1) -- (5.25,-0.5);
\draw[fill=black] (3.25,-1) circle [radius=0.05];
\draw[fill=black] (3.25,-0.5) circle [radius=0.05];
\draw[fill=black] (3.75,-0.5) circle [radius=0.05];
\draw[fill=black] (4.75,-0.5) circle [radius=0.05];
\draw[fill=black] (5.25,-1) circle [radius=0.05];
\draw[fill=black] (5.25,-0.5) circle [radius=0.05];
\draw[thick] (3.75,-0.5) -- (3.25,0.5);
\draw[thick] (4.75,-0.5) -- (4.25,0.5);
\draw[thick] (3.25,-0.5) -- (4.75,0.5);
\draw[thick] (5.25,-0.5) -- (5.25,0.5);
\draw[fill=black] (3.25,0.5) circle [radius=0.05];
\draw[fill=black] (4.25,0.5) circle [radius=0.05];
\draw[fill=black] (4.75,0.5) circle [radius=0.05];
\draw[fill=black] (5.25,0.5) circle [radius=0.05];
\draw[black,fill=white,thick] (4.25,-1) circle [radius=0.25] node {$a$};
\end{tikzpicture}
}
\newcommand{\Cell}{
\begin{tikzpicture}
\draw[thick] (0,0) -- (-1,1);
\draw[fill=black] (0,0) circle [radius=0.04];
\draw[fill=black] (-1,1) circle [radius=0.04];
\draw[thick] (0,0) -- (1,1);
\draw[fill=black] (1,1) circle [radius=0.04];
\draw[thick] (-0.5,0.5) -- (0,1);
\draw[fill=black] (-0.5,0.5) circle [radius=0.04];
\draw[fill=black] (0,1) circle [radius=0.04];
\draw[thick] (-0.75,0.75) -- (-0.5,1);
\draw[fill=black] (-0.75,0.75) circle [radius=0.04];
\draw[fill=black] (-0.5,1) circle [radius=0.04];
\draw[thick] (0.75,0.75) -- (0.5,1);
\draw[fill=black] (0.75,0.75) circle [radius=0.04];
\draw[fill=black] (0.5,1) circle [radius=0.04];
\end{tikzpicture}
}
\newcommand{\InfiniteQuasiRightVine}{
\begin{tikzpicture}
\draw[thick] (0,0) -- (-1,1);
\draw[fill=black] (0,0) circle [radius=0.04];
\draw[fill=black] (-1,1) circle [radius=0.04];
\draw[thick] (0,0) -- (1,1);
\draw[thick] (-0.5,0.5) -- (0,1);
\draw[fill=black] (-0.5,0.5) circle [radius=0.04];
\draw[fill=black] (0,1) circle [radius=0.04];
\draw[thick] (-0.75,0.75) -- (-0.5,1);
\draw[fill=black] (-0.75,0.75) circle [radius=0.04];
\draw[fill=black] (-0.5,1) circle [radius=0.04];
\draw[thick] (0.75,0.75) -- (0.5,1);
\draw[fill=black] (0.75,0.75) circle [radius=0.04];
\draw[fill=black] (0.5,1) circle [radius=0.04];
\draw[thick] (0+1,0+1) -- (-1+1,1+1);
\draw[fill=black] (0+1,0+1) circle [radius=0.04];
\draw[fill=black] (-1+1,1+1) circle [radius=0.04];
\draw[thick] (0+1,0+1) -- (1+1,1+1);
\draw[thick] (-0.5+1,0.5+1) -- (0+1,1+1);
\draw[fill=black] (-0.5+1,0.5+1) circle [radius=0.04];
\draw[fill=black] (0+1,1+1) circle [radius=0.04];
\draw[thick] (-0.75+1,0.75+1) -- (-0.5+1,1+1);
\draw[fill=black] (-0.75+1,0.75+1) circle [radius=0.04];
\draw[fill=black] (-0.5+1,1+1) circle [radius=0.04];
\draw[thick] (0.75+1,0.75+1) -- (0.5+1,1+1);
\draw[fill=black] (0.75+1,0.75+1) circle [radius=0.04];
\draw[fill=black] (0.5+1,1+1) circle [radius=0.04];
\draw[thick] (0+2,0+2) -- (-1+2,1+2);
\draw[fill=black] (0+2,0+2) circle [radius=0.04];
\draw[fill=black] (-1+2,1+2) circle [radius=0.04];
\draw[thick] (0+2,0+2) -- (1+2,1+2);
\draw[thick] (-0.5+2,0.5+2) -- (0+2,1+2);
\draw[fill=black] (-0.5+2,0.5+2) circle [radius=0.04];
\draw[fill=black] (0+2,1+2) circle [radius=0.04];
\draw[thick] (-0.75+2,0.75+2) -- (-0.5+2,1+2);
\draw[fill=black] (-0.75+2,0.75+2) circle [radius=0.04];
\draw[fill=black] (-0.5+2,1+2) circle [radius=0.04];
\draw[thick] (0.75+2,0.75+2) -- (0.5+2,1+2);
\draw[fill=black] (0.75+2,0.75+2) circle [radius=0.04];
\draw[fill=black] (0.5+2,1+2) circle [radius=0.04];
\node[rotate=45] at (3.25,3.25) {$\cdots$};
\end{tikzpicture}
}
\newcommand{\InfinitePL}{
\begin{tikzpicture}[scale=8.75]

\draw[->] (0,0) -- (0,1);
\draw[->] (0,0) -- (1,0);

\draw[black,fill=black] (0,0) circle [radius=0.01];
\draw node[below left] at (0,0) {$0$};

\draw[line width=0.25mm,opacity=0.75] (0,0) -- (1/2,1/4);
\draw[line width=0.25mm,opacity=0.75] (1/2,1/4) -- (3/4,3/4);

\draw[black,fill=black] (1/2,1/4) circle [radius=0.01];
\draw[black,fill=black] (5/8,1/2) circle [radius=0.01];
\draw[black,fill=black] (3/4,3/4) circle [radius=0.01];

\draw[black,fill=black] (1/2,0) circle [radius=0.01];
\draw node[below] at (1/2,0) {$\frac{1}{2}$};\draw[black,fill=black] (5/8,0) circle [radius=0.01];
\draw node[below] at (5/8,0) {$\frac{5}{8}$};
\draw[black,fill=black] (3/4,0) circle [radius=0.01];
\draw node[below] at (3/4,0) {$\frac{3}{4}$};

\draw[black,fill=black] (0,1/4) circle [radius=0.01];
\draw node[anchor=east] at (0,1/4) {\scriptsize$1/4$};
\draw[black,fill=black] (0,1/2) circle [radius=0.01];
\draw node[anchor=east] at (0,1/2) {\scriptsize$1/2$};
\draw[black,fill=black] (0,3/4) circle [radius=0.01];
\draw node[anchor=east] at (0,3/4) {\scriptsize$3/4$};

\draw[line width=0.125mm,opacity=0.625] (3/4,3/4) -- (7/8,13/16);
\draw[line width=0.125mm,opacity=0.625] (7/8,13/16) -- (15/16,15/16);

\draw[black,fill=black] (7/8,13/16) circle [radius=0.005];
\draw[black,fill=black] (29/32,7/8) circle [radius=0.005];
\draw[black,fill=black] (15/16,15/16) circle [radius=0.005];

\draw[black,fill=black] (7/8,0) circle [radius=0.005];
\draw node[below] at (7/8,0) {\tiny$\frac{7}{8}$};
\draw[black,fill=black] (29/32,0) circle [radius=0.005];
\draw node[below] at (29/32,0) {\tiny$\frac{29}{32}$};
\draw[black,fill=black] (15/16,0) circle [radius=0.005];
\draw node[below] at (15/16,0) {\tiny$\frac{15}{16}$};

\draw[black,fill=black] (0,13/16) circle [radius=0.005];
\draw node[left] at (0,13/16) {\tiny$13/16$};
\draw[black,fill=black] (0,7/8) circle [radius=0.005];
\draw node[left] at (0,7/8) {\tiny$7/8$};
\draw[black,fill=black] (0,15/16) circle [radius=0.005];
\draw node[left] at (0,15/16) {\tiny$15/16$};

\draw[line width=0.0625mm,opacity=0.5] (15/16,15/16) -- (31/32,61/64);
\draw[line width=0.0625mm,opacity=0.5] (31/32,61/64) -- (63/64,63/64);

\draw[black,fill=black] (31/32,61/64) circle [radius=0.00125];
\draw[black,fill=black] (125/128,31/32) circle [radius=0.00125];
\draw[black,fill=black] (63/64,63/64) circle [radius=0.00125];

\draw[black,fill=black] (31/32,0) circle [radius=0.00025];
\draw[black,fill=black] (63/64,0) circle [radius=0.00025];

\draw[black,fill=black] (0,61/64) circle [radius=0.00025];
\draw[black,fill=black] (0,63/64) circle [radius=0.00025];

\draw[line width=0.003125mm,opacity=0.25] (63/64,63/64) -- (127/128,253/256);
\draw[line width=0.003125mm,opacity=0.25] (127/128,253/256) -- (255/256,255/256);

\draw[black,fill=black] (127/128,253/256) circle [radius=0.0000125];
\draw[black,fill=black] (255/256,255/256) circle [radius=0.0000125];

\draw[black,fill=black] (1,1) circle [radius=0.0000125];
\end{tikzpicture}
}
\newcommand{\BoundedSupport}{
\begin{tikzpicture}[scale=2]

\draw[->] (-1,-1) -- (-1,3);
\draw[->] (-1,-1) -- (3,-1);

\draw[line width=0.25mm] (-1,-1) -- (0,0);
\draw[line width=0.25mm] (1,1) -- (3,3);
\draw[black,fill=black] (0,0) circle [radius=0.03];
\draw[black,fill=black] (1,1) circle [radius=0.03];
\draw[black,fill=black] (3,3) circle [radius=0.03];
\draw[black,fill=black] (-1,-1) circle [radius=0.03];
\draw node[below left] at (-1,-1) {$0$};

\draw[line width=0.25mm,opacity=0.75] (0,0) -- (1/2,1/4);
\draw[line width=0.25mm,opacity=0.75] (1/2,1/4) -- (3/4,3/4);

\draw[black,fill=black] (1/2,1/4) circle [radius=0.02];
\draw[black,fill=black] (5/8,1/2) circle [radius=0.02];
\draw[black,fill=black] (3/4,3/4) circle [radius=0.02];

\draw node[below] at (1,-1) {$\frac{1}{2}$};\draw[black,fill=black] (1,-1) circle [radius=0.03];
\draw[black,fill=black] (0,-1) circle [radius=0.03];
\draw node[below] at (0,-1) {$\frac{1}{4}$};
\draw node[anchor=east] at (-1,1) {$\frac{1}{2}$};
\draw[black,fill=black] (-1,1) circle [radius=0.03];
\draw node[anchor=east] at (-1,0) {$\frac{1}{4}$};
\draw[black,fill=black] (-1,0) circle [radius=0.03];

\draw[line width=0.125mm,opacity=0.625] (3/4,3/4) -- (7/8,13/16);
\draw[line width=0.125mm,opacity=0.625] (7/8,13/16) -- (15/16,15/16);

\draw[black,fill=black] (7/8,13/16) circle [radius=0.01];
\draw[black,fill=black] (29/32,7/8) circle [radius=0.01];
\draw[black,fill=black] (15/16,15/16) circle [radius=0.01];

\draw[line width=0.0625mm,opacity=0.5] (15/16,15/16) -- (31/32,61/64);
\draw[line width=0.0625mm,opacity=0.5] (31/32,61/64) -- (63/64,63/64);

\draw[black,fill=black] (31/32,61/64) circle [radius=0.00125];
\draw[black,fill=black] (125/128,31/32) circle [radius=0.00125];
\draw[black,fill=black] (63/64,63/64) circle [radius=0.00125];

\draw[black,fill=black] (31/32,0) circle [radius=0.00025];
\draw[black,fill=black] (63/64,0) circle [radius=0.00025];

\draw[black,fill=black] (0,61/64) circle [radius=0.00025];
\draw[black,fill=black] (0,63/64) circle [radius=0.00025];

\draw[line width=0.003125mm,opacity=0.25] (63/64,63/64) -- (127/128,253/256);
\draw[line width=0.003125mm,opacity=0.25] (127/128,253/256) -- (255/256,255/256);

\draw[black,fill=black] (127/128,253/256) circle [radius=0.0000125];
\draw[black,fill=black] (255/256,255/256) circle [radius=0.0000125];

\draw[black,fill=black] (1,1) circle [radius=0.0000125];
\end{tikzpicture}
}
\newcommand{\LocalAction}{
\begin{tikzpicture}
\draw[fill=black] (0,0) circle [radius=0.05];
\draw[thick] (0,0) -- (-0.5,0.5);
\draw[thick] (0,0) -- (0.5,0.5);
\draw[fill=black] (-0.5,0.5) circle [radius=0.05];
\draw[thick] (-0.5,0.5) -- (-1,1);
\draw[thick] (-0.5,0.5) -- (0,1);
\draw[fill=black] (-1,1) circle [radius=0.05];
\draw[fill=black] (-0.5,1.5) circle [radius=0.05];
\draw[fill=black] (0.5,1.5) circle [radius=0.05];
\draw[thick] (0,1) -- (-0.5,1.5);
\draw[thick] (0,1) -- (0.5,1.5);
\draw[fill=black] (0.5,0.5) circle [radius=0.05];
\draw[black,fill=white,thick] (0,0) circle [radius=0.25] node {$a$};
\draw[black,fill=white,thick] (-0.5,0.5) circle [radius=0.25] node {$b$};
\draw[black,fill=white,thick] (0,1) circle [radius=0.25] node {$a$};
\end{tikzpicture}
}
\newcommand{\germ}{
\begin{tikzpicture}
\draw[fill=black] (0,0) circle [radius=0.05];
\draw[thick] (0,0) -- (-0.5,0.5);
\draw[thick] (0,0) -- (0.5,0.5);
\draw[thick] (-0.5,0.5) -- (-1,1);
\draw[thick] (-0.5,0.5) -- (0,1);
\draw[fill=black] (-1,1) circle [radius=0.05];
\draw[fill=black] (-0.5,1.5) circle [radius=0.05];
\draw[fill=black] (0.5,1.5) circle [radius=0.05];
\draw[thick] (0,1) -- (-0.5,1.5);
\draw[thick] (0,1) -- (0.5,1.5);
\draw[fill=black] (0.5,0.5) circle [radius=0.05];
\draw[black,fill=white,thick] (0,0) circle [radius=0.25] node {$a$};
\draw[black,fill=white,thick] (-0.5,0.5) circle [radius=0.25] node {$b$};
\draw[black,fill=white,thick] (0,1) circle [radius=0.25] node {$b$};
\end{tikzpicture}
}
\newcommand{\GroupElementA}{
	\begin{tikzpicture}
		\draw[thick] (0,0) -- (0.7,0.75);
		\draw[thick] (0,0) -- (-0.7,0.75);
		\draw[black,fill=white,thick] (0,0) circle [radius=0.25] node {$a$};
		\draw[fill=black] (-0.7,0.75) circle [radius=0.06];
		\draw[fill=black] (0.7,0.75) circle [radius=0.06];
		\draw[thick] (-0.7,0.75) -- (0.7,1.75);
		\draw[thick] (0.7,0.75) -- (-0.7,1.75);
		\draw[thick] (0.7,1.75) -- (0,2.5);
		\draw[thick] (-0.7,1.75) -- (0,2.5);
		\draw[black,fill=white,thick] (0,2.5) circle [radius=0.25] node {$b$};
		\draw[fill=black] (0.7,1.75) circle [radius=0.06];
		\draw[fill=black] (-0.7,1.75) circle [radius=0.06];
	\end{tikzpicture}
}
\newcommand{\GroupElementB}{
	\begin{tikzpicture}
		\draw[thick] (0,0) -- (0.6,0.65);
		\draw[thick] (0,0) -- (-1.2,1.25);
		\draw[thick] (0.6,0.65) -- (1.2,1.25);
		\draw[thick] (0.6,0.65) -- (0,1.25);
		\draw[black,fill=white,thick] (0,0) circle [radius=0.25] node {$a$};
		\draw[black,fill=white,thick] (0.6,0.65) circle [radius=0.25] node {$a$};
		\draw[fill=black] (1.2,1.25) circle [radius=0.06];
		\draw[fill=black] (0,1.25) circle [radius=0.06];
		\draw[fill=black] (-1.2,1.25) circle [radius=0.06];
		\draw[thick] (-1.2,1.25) -- (1.2,2.25);
		\draw[thick] (0,1.25) -- (-1.2,2.25);
		\draw[thick] (1.2,1.25) -- (0,2.25);
		\draw[thick] (0,3.5) -- (-0.6,2.85);
		\draw[thick] (0,3.5) -- (1.2,2.25);
		\draw[thick] (-0.6,2.85) -- (0,2.25);
		\draw[thick] (-0.6,2.85) -- (-1.2,2.25);
		\draw[black,fill=white,thick] (-0.6,2.9) circle [radius=0.25] node {$a$};
		\draw[black,fill=white,thick] (0,3.5) circle [radius=0.25] node {$b$};
		\draw[fill=black] (1.2,2.25) circle [radius=0.06];
		\draw[fill=black] (0,2.25) circle [radius=0.06];
		\draw[fill=black] (-1.2,2.25) circle [radius=0.06];
	\end{tikzpicture}
}
\newcommand{\GroupElementC}{
	\begin{tikzpicture}
		\draw[thick] (0,0) -- (1.2,1.25);
		\draw[thick] (0,0) -- (-0.6,0.65);
		\draw[thick] (-0.6,0.65) -- (-1.2,1.25);
		\draw[thick] (-0.6,0.65) -- (0,1.25);
		\draw[black,fill=white,thick] (0,0) circle [radius=0.25] node {$b$};
		\draw[black,fill=white,thick] (-0.6,0.65) circle [radius=0.25] node {$b$};
		\draw[fill=black] (1.2,1.25) circle [radius=0.06];
		\draw[fill=black] (0,1.25) circle [radius=0.06];
		\draw[fill=black] (-1.2,1.25) circle [radius=0.06];
		\draw[thick] (-1.2,1.25) -- (1.2,2.25);
		\draw[thick] (0,1.25) -- (-1.2,2.25);
		\draw[thick] (1.2,1.25) -- (0,2.25);
		\draw[thick] (0,3.5) -- (-0.6,2.85);
		\draw[thick] (0,3.5) -- (1.2,2.25);
		\draw[thick] (-0.6,2.85) -- (0,2.25);
		\draw[thick] (-0.6,2.85) -- (-1.2,2.25);
		\draw[black,fill=white,thick] (-0.6,2.85) circle [radius=0.25] node {$a$};
		\draw[black,fill=white,thick] (0,3.5) circle [radius=0.25] node {$b$};
		\draw[fill=black] (1.2,2.25) circle [radius=0.06];
		\draw[fill=black] (0,2.25) circle [radius=0.06];
		\draw[fill=black] (-1.2,2.25) circle [radius=0.06];
	\end{tikzpicture}
}
\providecommand{\keywords}[1]{\tbf{\textit{Index terms---}} #1}
\begin{document}

\title[Finitely presented simple groups with no piecewise projective actions]{Finitely presented simple groups with no piecewise projective actions}

\thanks{AB and RS are supported by the Australian Research Council Grant DP200100067.\\
RS is supported by an Australian Government Research Training Program (RTP) Scholarship.}
\author{Arnaud Brothier and Ryan Seelig}
\address{Arnaud Brothier, University of Trieste, Department of Mathematics, via Valerio 12/1, 34127, Trieste, Italy and
School of Mathematics and Statistics, University of New South Wales, Sydney NSW 2052, Australia}
\email{arnaud.brothier@gmail.com\endgraf
\url{https://sites.google.com/site/arnaudbrothier/}}
\address{Ryan Seelig, School of Mathematics and Statistics, University of New South Wales, Sydney NSW 2052, Australia}
\email{ryanseelig1997@gmail.com\endgraf
\url{https://sites.google.com/view/ryanseelig/}}

\subjclass[2020]{20E32 Simple groups}

\begin{abstract} 
We construct an explicit infinite family of pairwise non-isomorphic infinite simple groups of type $\mathrm{F}_\infty$ (in particular, they are finitely presented) that act faithfully on the circle by orientation-preserving homeomorphisms, but that admit no non-trivial piecewise affine nor piecewise projective actions on the projective line.
Our examples are certain \emph{forest-skein groups} which, informally, are a mixture of Richard Thompson's groups with Vaughan Jones' planar algebras. 
\end{abstract}

\maketitle

\keywords{{\bf Keywords:} 
forest-skein groups, McCleary--Rubin reconstruction theorems, finitely presented simple groups, Richard Thompson's groups, piecewise projective actions.}

\section*{Introduction}\label{sec:intro}

There has been much recent interest in constructing (infinite) simple groups witnessing unforeseen properties, see for instance \cite{Juschenko-Monod13,Hyde-Lodha19,Skipper-Witzel-Zaremsky19,Belk-Bleak-Matucci-Zaremsky}. Of particular interest are \emph{finitely presented} simple groups and a large source of such examples arise as ``Thompson-like'' groups acting on the circle by piecewise affine homeomorphisms having finitely many breakpoints, \cite{Brown87,Stein92,Lodha19,Burillo-Nucinkis-Reeves22}. However, among \emph{all} the examples of finitely presented simple groups acting on the circle in the literature, only one group $\Lambda$, constructed by Lodha, was known not to admit any piecewise affine action, \cite{Lodha19}. Lodha's group $\Lambda$ satisfies a slightly weaker property than acting piecewise affine: it acts by piecewise \emph{projective} homeomorphisms on the projective line. It is natural then to wonder if there are finitely presented simple groups acting on the circle, but not admitting any piecewise \emph{projective} actions.
Our main result is the following.

\begin{named}{Main Theorem}\label{theo:main}
There exist infinitely many explicit non-isomorphic simple groups having topological finiteness property $\mathrm{F}_\infty$ (in particular are finitely presented) which act faithfully on the circle by orientation-preserving homeomorphisms, but admit no non-trivial piecewise projective, hence no piecewise affine, action on the projective line.
\end{named}

We prove this in subsection \ref{sec:main-theorem}. As far as we're aware, our examples are the first groups of their kind. The word ``explicit'' is used to emphasise that our result is constructive --- our examples form a specific family (defined at the beginning of section \ref{sec:examples}) of \emph{forest-skein groups}.
Forest-skein groups were introduced in \cite{Brothier22} by the first author to strengthen deep connections uncovered by Jones between subfactor theory, conformal field theory, and Richard Thompson's groups $F,T,V$, \cite{Jones17,Brothier20}. Jones associated to any subfactor an algebraic quantum field theory with Thompson's group $T$ arising as its global symmetries. The core idea is then that a well-chosen forest-skein group will better reflect the structure of the subfactor and should thus replace $T$ as the global symmetries of the field theory, see the introduction of \cite{Brothier22} for further details. 
Forest-skein groups differ from most ``Thompson-like'' groups in that they are defined \emph{diagramatically}\footnote{Despite being defined with similar looking planar diagrams, forest-skein groups are not generically Guba--Sapir \emph{diagram} groups: the abelianisation of the former often has torsion, \cite[Sec.~3.6.3]{Brothier22}, while that of the latter is always free, \cite[Thm.~9.9]{Guba-Sapir06}.}, rather than \emph{dynamically}. This diagrammatic formalism permits to very easily construct interesting infinite discrete groups as well as giving powerful tools to study them, while it still has the merit to produce exotic dynamics like those described in the \ref{theo:main}.
Proving simplicity of the derived subgroup of a forest-skein group reduces (via classic techniques of Higman \cite{Higman54} and Epstein \cite{Epstein70}) to proving that its so-called canonical action is \emph{faithful}. 
In fact, the derived subgroup of the forest-skein group is simple \emph{if and only if} its canonical action is faithful, see section \ref{sec:simple} for details. Though, in practice, deciding faithfulness of the canonical action of a give forest-skein group is a difficult task. The examples in this article are similar to Lodha's group $\Lambda$ in that they also also satisfy a weaker property than being piecewise affine. However, ours go in a different direction: they are ``piecewise affine'' if we allow for having \emph{infinitely many} breakpoints in a ``controlled'' way, see subsections \ref{sec:graph} and \ref{sec:FGE} for  details.

{\bf Plan of the article.} This article is a shorter, self-contained version of a previous preprint \cite{Brothier-Seelig24b}. This preprint had as its goal to develop the general theory of forest-skein groups and in particular to explore their canonical dynamics. This general study lead to our actual \ref{theo:main} as an unanticipated corollary. We should also note that the general theory produces many other examples of FS groups witnessing the \ref{theo:main}.

This article is split into two sections. In section \ref{sec:prelim} we give a brief introduction to forest-skein groups and explain some key properties they enjoy. The reader familiar with the binary tree calculus of Thompson's groups $F,T,V$ (and some of their cousins) may like to skip section \ref{sec:prelim} and refer back to it when needed. An intuitive picture of forest-skein groups may then be helpful: they consist of tree-pair diagrams having coloured non-leaf vertices which are taken equivalent up to changing certain patterns, similar (in an informal sense) to the tree-pair diagrams for Brin's group \cite{Brin04,Burillo-Cleary10}, Cleary's group \cite{Cleary00,Burillo-Nucinkis-Reeves21}, the Lodha--Moore groups \cite{Lodha-Moore16}, and Stein's groups \cite{Stein92}, see also example \ref{ex:fraction-trees}. 

In the section \ref{sec:examples} we define a very specific family of forest-skein groups. We show that a powerful reconstruction theorem of McCleary and Rubin applies to our examples, which implies each of them encodes a faithful action on the circle. This action is an isomorphism invariant and this fact permits us to distinguish all of our examples. We show these examples witness the \ref{theo:main} and then finish the article with a comparison of our groups to some others.

\subsection*{Acknowledgements} We warmly thank Matt Brin, Yash Lodha, Matt Zaremsky, Christian de Nicola Larsen, and Dilshan Wijesena for their constructive and pertinent feedbacks on a previous iteration of this article. We are very grateful for the anonymous referees who drastically helped us improving the quality of this article. The second author thanks the Australian Research Council and the University of Trieste which, respectively, supported and hosted him during parts of the writing of this article.

\subsection*{Notations and conventions} Here are some common notations and conventions that we use throughout the article that the reader may like to refer back to.

{\bf General notation.}

\begin{itemize}
    \item $\R$ is the real line, $\bS$ is the circle, $\bP$ is the projective line, $\fC$ is the Cantor space.
\item $D(\Ga)$ is the derived subgroup of a group $\Ga$ and $\Ga^{ab}:=\Ga/D(\Ga)$ is the abelianisation.
\item $\Gr\la S|R\ra$ is the group defined by generators $S$ and relations $R$.
\item $\Z_n$ is the cyclic group of order $n$.
\item $\Homeo(X)$ is the homeomorphism group of a topological space $X$.
\item If $\Ga\act X$ is a \emph{faithful} group action by homeomorphisms, we may identify $\Ga$ as a subgroup of $\Homeo(X)$ and write $g(x)$ for the action of $g\in\Ga$ on $x\in X$.
\item We say a function $f:X\to X$ \emph{fixes} a subset $A\subset X$ if $f(a)=a$ for all $a\in A$.
\end{itemize}
{\bf Forest-skein notation.}
\begin{itemize}
\item FS stands for ``forest-skein''.
\item We use symbols $r,s,t$ for trees and $f,g,h$ for forests and use the notation $t(a)$ for the coloured tree with shape $t$ whose every interior vertex is coloured by $a$.
\item $\cF$ is an Ore FS category, $L,G,M$ are its $F,T,V$-type FS groups. Elements of $L$ (resp.~$G,M$) are written $[t/s]$ (resp.~$[t/\pi/s]$) with $t,s$ trees and $\pi$ a permutation.
    \item $\FS\la S|R\ra$ is the FS category defined by colours $S$ and skein relations $R$.
    We often have $S=\{a,b\}$ and $R$ being a single skein relation $x(a) = y(b).$
\item $\FS\la S\ra$ is the free FS category with colours $S$ (and no skein relations).
\item $\lambda_n$ (resp.~$\rho_n)$ is a monochromatic left- (resp.~right-) vine with $n$ carets.
\item $\tau_n:=Y\circ(\lambda_n\ot I)$ for all $n\geqslant3$, $\cF_n:=\FS\la a,b|\tau_n(a)=\rho_n(b)\ra$, and $L_n,G_n,M_n$, are the $F,T,V$-type FS groups of $\cF_n$.
\end{itemize}

\section{Preliminaries}\label{sec:prelim}

In this section we give a brief but completely self contained overview of forest-skein groups (henceforth FS groups). 
FS groups are constructed by taking fractions of morphisms in a small category that has a diagramatic calculus. The underlying category is used extensively to study the FS groups produced, however, one needs no prior category theory knowledge in order to start working with FS groups. 
After defining FS categories and groups we discuss some properties they enjoy that are needed in this article. We invite the interested reader to consult \cite{Brothier22} for further details on the general theory.

\subsection{Forest-skein categories}{\bf Words.} Given a non-empty set $A$ the set $A^*$ of all \emph{words over $A$} is consists of all ordered tuples having entries in $A$, that is, $A^*=\cup_{n\geqslant0} A^n$. We write $a_1a_2\cdots a_n$ instead of $(a_1,a_2,\dots,a_n)$ and denote the unique element of $A^0$ by $\varnothing$, which is called the \emph{empty word}. 
The set of words over $A$ forms a monoid under concatenation: $(a_1\cdots a_n)\cdot (b_1\cdots b_m):=a_1\cdots a_nb_1\cdots b_m$, which has neutral element $\varnothing$. Sometimes we may supress the $\cdot$ and write $u\cdot v=uv$. For any $w\in A^*$ denote by $w^n$ the word that is $w$ concatenated with itself $n$ times. We say $u\in A^*$ is a \emph{subword} of $w\in A^*$ if $w=w'\cdot u\cdot w''$ for some (possibly empty) words $w',w''\in A^*$.

{\bf Sequences.} Let $A^\omega$ be the set of all (infinite) sequences having entries in $A$. We may write $a_1a_2a_3\cdots$ instead of $(a_1,a_2,a_3,\dots)$. We often treat $A^\omega$ as a left $A^*$-set with action $a_1\cdots a_n\cdot b_1b_2\dots:=a_1,\cdots a_nb_1b_2\cdots$.  Given $w\in A^*$ we write $w\cdot A^\omega$ for the set $\{w\cdot x:x\in A^\omega\}$ of all sequences having \emph{prefix} $w$ and we also denote by $\ov w$ the infinite sequence $w\cdot w\cdot w\cdots$, that is, $w$ concatenated with itself infinitely many times.

{\bf Trees.} The \emph{infinite binary tree} $\fT$ is the graph having vertex set $\{\0,\1\}^*$ and directed edges $(w,w\cdot\0)$ and $(w,w\cdot\1)$ for each $w\in\{\0,\1\}^*$. The vertices $w\cdot\0$ and $w\cdot\1$ are the \emph{left} and \emph{right children} of $w$, respectively, and the empty word $\varnothing$ is called the \emph{root} of $\fT$. In this article a \emph{tree} $t$ is a finite connected subgraph of $\fT$ containing the root $\varnothing$ such that for each vertex in $t$ either both children are also contained in $t$ or neither of them are (i.e.,~these are the finite full rooted subtrees of $\fT$). Let $t$ be a tree. If $\nu$ is a vertex of $t$ such that $\nu\cdot\0$ and $\nu\cdot\1$ are also vertices of $t$, then $\nu$ is an \emph{interior vertex of $t$}. Otherwise, $\nu$ is a \emph{leaf of $t$}, and we write $\Leaf(t)$ for the set of all leaves of $t$. The leaves of $t$ are totally ordered using the lexicographical order on $\{\0,\1\}^*$ with $\0<\1$. Hence, we often identify the leaf set of a tree $t$ having $n$ leaves with $\{1,2,\dots,n\}$ with its usual order. We may then say the word in $\{\0,\1\}^*$ associated to the $i$th leaf of $t$ is the \emph{address of the $i$th leaf.}

{\bf Paths.} Let $u$ and $v$ be (not necessarily interior) vertices in a tree $t$. A \emph{path from $u$ to $v$} in $t$ is a finite sequence of vertices $(\nu_1,\nu_2\dots,\nu_l)$ in $t$ such that $\nu_1=u$, $\nu_l=v$, and $ (\nu_i,\nu_{i+1})$ is a directed edge in $t$ for all $1\leqslant i<l$. Note that in any tree $t$ there is a \emph{unique} path from its root to any one of its leaves.

{\bf Special trees.} The unique tree having one leaf (and no interior vertices) is called the \emph{trivial tree} and is denoted $I$, while the unique tree having two leaves (and one interior vertex) is called the \emph{caret} and is denoted $Y$. 

{\bf Subtrees.} Let $t$ be a tree and $w$ an interior vertex of $t$. A tree $s$ is called a \emph{subtree of $t$ rooted at $w$} if for every interior vertex $\nu$ of $s$, we have that $w\cdot\nu$ is an interior vertex of $t$. If $w=\varnothing$, then we say $s$ is a \emph{rooted subtree of $t$}. In general a tree $s$ will not be a subtree of a tree $t$ in a unique way. Indeed, for any tree $t$, the caret $Y$ is a subtree rooted at each interior vertex of $t$ and we call such a subtree a \emph{caret of $t$}. Note that carets of $t$ are in one-to-one correspondence with interior vertices of $t$.

{\bf Forests.} A \emph{forest} $f$ is an ordered tuple of trees $(f_1,f_2,\dots,f_n)$. We call $n$ the \emph{number of roots of $f$} and we identify a trees as forests with exactly $1$ root. A forest is \emph{trivial} if all of its trees are trivial, so there is exactly one trivial forest for each $n\geqslant1$. 

{\bf Coloured forests.} Let $S$ be a non-empty set. A \emph{tree coloured over $S$} is a tree $t$ having each interior vertex (i.e.,~a non-leaf vertex) decorated by an element of $S$ and a \emph{forest coloured over $S$} is an ordered tuple of coloured trees over $S$. If $S$ is clear from context we may drop ``coloured over $S$'' and refer them as forests. We emphasise that forests with no colouring are \emph{monochromatic}. For $a\in S$, and a tree $t$ coloured over $S$, we may call an $a$-coloured vertex in a tree an \emph{$a$-vertex}. Given a monochromatic tree $t$ and a colour $a$ we write $t(a)$ for the coloured tree having only $a$-vertices. The \emph{shape} of $t(a)$ is $t$ and generally we use the word shape to express the underlying monochromatic tree of a coloured tree.

{\bf Coloured carets.} As previously mentioned a caret is referred to as either a tree or a subtree with shape $Y$. If $a\in S$ is a colour, then we may call an \emph{$a$-caret} or \emph{caret coloured by $a$} the tree or subtree with shape $Y$ and whose single interior vertex is coloured by $a$. Since we deal only with binary trees, each tree having $n$ leaves has exactly $n-1$ carets.
 
{\bf Diagrams for coloured forests.} We can identify trees with isotopy classes of their geometric realisations inside the strip $\R\times[0,1]$ which have their leaves contained in order in $\R\times\{1\}$ and all interior vertices contained in $\R\times(0,1)$. Technically, we should also connect the root of a tree to a point in $\R\times\{0\}$ by a line segment. We suppress drawing this for non-trivial trees, but draw it for trivial trees. We call such an isotopy class a \emph{tree diagram.} If the tree is coloured, then we decorate each interior vertex in its tree diagram with its respective colour. Similarly, we identify a forest $f=(f_1,f_2,\dots,f_n)$ with its \emph{forest diagram} in $\R\times[0,1]$ which consists of the union of the tree diagrams of its trees drawn so the root and all leaves of the $i$th tree $f_i$ to the left of the root and all leaves of the $j$th tree $f_j$ for all $i<j$. If the forest is coloured we decorate all interior vertices of its trees with the respective colourings. In practice we draw leaves anywhere above the interior vertices in the right order (so not necessarily having height $1$). For example, a forest coloured over $\{a,b\}$ having $3$ roots and $6$ leaves is:
\begin{figure}[H]
\centering
\FigD
\end{figure}
The third tree in this forest is the trivial tree $I$.
We now introduce two natural binary operations on forests coloured over a fixed set $S$.

{\bf Composition.} Let $f$ be a forest with $l$ roots and $m$ leaves and let $g$ be a forest with $m$ roots and $n$ leaves. We may form their \emph{composition} $f\circ g$ as follows. 
Stack the diagram for $g$ on top of the diagram for $f$ and identify the $i$th leaf of $f$ with the $i$th root of $g$ for all $1\leqslant i\leqslant m$. After scaling $\R\times[0,2]$ affinely to $\R\times[0,1]$ this yields a new forest-diagram. We equip this forest-diagram with the natural colouring inherited from $f$ and $g$ (this is well-defined as leaves are not coloured). For example:
\begin{align*}
\Composition
\end{align*}
Composition is associative (where defined) and has trivial forests as neutral elements. We will often write $fg$ instead of $f\circ g$ usually without qualifying the number of roots and leaves of $f$ and $g$, leaving implicit that the number of leaves $f$ must equal to the number of roots of $g$. We may also refer to the process of going from $f$ to $fg$ as \emph{growing $f$ by $g$}.

{\bf Tensor product.} The \emph{tensor product} of any two forests as $(f_1,\dots,f_n)\ot(g_1,\dots,g_m):=(f_1,\dots,f_n,g_1,\dots,g_m)$. Diagrammatically this is horizontal concatenation, for example:

\begin{figure}[H]
\centering
\TensorProd
\end{figure}

{\bf Notations for some special forests.} The monochromatic \emph{left-vine} and \emph{right-vine} with $n$ carets are defined by
\begin{align*}
\lambda_n:=Y(Y\ot I)\cdots(Y\ot I^{\ot n-1})\quad\textnormal{and}\quad\rho_n:=Y(I\ot Y)\cdots(I^{\ot n-1}\ot Y),
\end{align*}
respectively. For example
\begin{align*}
\lambda_3=\LeftVine\quad\textnormal{and}\quad\rho_3=\RightVine.
\end{align*}
We may refer to a coloured tree whose shape is a left or right-vine as a left or right-vine, respectively. As previously mentioned, given a monochromatic tree $t$ and a colour $a$ we write $t(a)$ for the tree $t$ in which every interior vertex is coloured by $a$. Such a tree may be called \emph{an $a$-tree} and a forest made of $a$-trees is an \emph{$a$-forest}.

{\bf Free forest-skein category.} Let $S$ be a non-empty set and write $\FS\la S\ra$ for the set of all forests coloured over $S$. If we set the source and target of a forest to be its number of leaves and roots respectively, then $\FS\la S\ra$ is the morphism set of a small category whose objects are the natural numbers $\N$ and whose composition is composition of forests. By abuse of notation we denote this category by $\FS\la S\ra$ and we call it the \emph{free forest-skein category over $S$}. We also adopt this convention for all other categories appearing in this article (that is, viewing it as a set with a partially defined associative binary operation, as is commonly done for groupoids). Up to adding a formal ``empty forest'' having $0$ roots and $0$ leaves to $\FS\la S\ra$, the tensor product turns $\FS\la S\ra$ into a (strict) monoidal category in the sense of \cite[Chapter VII]{Maclane78}, that is, $\ot:\FS\la S\ra\times\FS\la S\ra\to\FS\la S\ra$ is an associative bifunctor having empty forest as neutral element, defined on objects as $n\ot m:=n+m$, and morphisms as the tensor product of forests.

{\bf Skein relations and forest-skein categories.} A \emph{skein relation over $S$} is a pair of trees $(u,v)$ (often written $u=v$ or $u\sim v$) coloured over $S$ and having the same number of leaves, or equivalently, the same number of carets. For example:
\begin{align}\label{fig:skein-example}
\SkeinRelationThree    
\end{align}
is a skein relation over $\{a,b\}$. Given a set $R$ of skein relations over $S$ we write $\ov{R}$ for the smallest equivalence relation containing $R$ on $\FS\la S\ra$ satisfying for all $(x,y)\in\ov R$ we have
\begin{align*}
(f\circ x\circ g,f\circ y\circ g)\in\ov R\quad\textnormal{and}\quad(p\ot x\ot q,p\ot y\ot q)\in\ov R
\end{align*}
for all $f,g,p,q\in\FS\la S\ra$. These conditions ensure that the quotient set $\FS\la S\ra/\ov{R}$ induces a monoidal category structure with objects $\N$, composition $[f]\circ[g]:=[f\circ g]$, and tensor product $[p]\ot[q]:=[p\ot q]$. Note that if trees in skein relations did not have the same number of leaves, then $\circ$ would not pass to the quotient. 

\begin{definition}
We call $\FS\la S|R\ra=\FS\la S\ra/\ov R$ the \emph{forest-skein (FS) category with skein presentation $\la S|R\ra$}.
\end{definition}

We will always denote the class $[f]$ by $f$ and to avoid confusions we distinguish forests in $\FS\la S\ra$ from forests in $\FS\la S|R\ra$ by insisting the former are \emph{free forests} (meaning no skein relations involved). We typically denote an arbitrary FS category by $\cF$ leaving implicit an underlying skein presentation.

{\bf Diagrams for permutations.} We also want to allow permutations of the leaves of our forests. Fix $n\geqslant1$ and a permutation $\pi:\{1,2,\dots,n\}\to\{1,2,\dots,n\}$. We identify $\pi$ with the diagram in $\R\times[0,1]$ consisting of the union of line segments going from $(1,i)$ to $(0,\pi(i))$ for each $1\leqslant i\leqslant n$. Each such line segment is called a \emph{strand of $\pi$}. For $\sigma$ and $\tau$ both having $n$ strands we form $\sigma\circ\tau$ by taking the union of all line segments from $(1,i)$ to $(0,\sigma(\tau(i)))$ for $1\leqslant i\leqslant n$. This operation can be interpreted diagrammatically as vertical stacking. For example
\begin{align*}
(12)\circ(123)\quad=\quad\PermutationA\quad=\quad\PermutationB\quad=(23).
\end{align*}
With this operation, the set all such diagrams with a fixed number $n$ many strands forms a group $\Sigma_n$ which is isomorphic to the symmetric group on $n$ letters. 
The identity is the diagram consisting of vertical strands and the inverse of $\pi$ is the reflection of $\pi$ about the line $y=1/2$. A permutation with $n$ strands is \emph{cyclic} if it is of the form $i\mapsto i+j~\textnormal{mod}~n$ for some fixed $j$. In cycle notation a permutation is cyclic if and only if it is in the subgroup generated by $(12\cdots n)$ which is isomorphic to the cyclic group $\Z_n$, justifying the terminology. However, we warn the reader a cycle with our definition may have more than one orbit, so is not cyclic in that sense, e.g.,~$(1234)\circ(1234)=(13)(24)$.
One can define $\Sigma_n$ using \emph{classes} of diagrams similar to Artin braid groups though allowing only one type of crossing rather than two, see \cite{Artin47}.

{\bf Symmetric forests.} Now let $\cF$ be an FS category. We may consider forests in $\cF$ having a permutation with as many strands as there are leaves of said forest stacked on top. We may write such a diagram formally as $f\circ\pi$, where $f\in\cF$ has $n$ leaves and $\pi$ is a permutation on $n$ letters. We may call such a diagram a \emph{symmetric forest}. We would like to extend the category structure of $\cF$ to symmetric forests. To compose two symmetric forests $f\circ\pi$ and $g\circ\sigma$ we want a way of writing $\pi\circ g$ as $g^\pi\circ\pi^g$ for some forest $g^\pi\in\cF$ and permutation $\pi^g$. This will enable us to extend composition to symmetric forests as
\begin{align*}
(f\circ\pi)\circ(g\circ\sigma)=fg^\pi\circ\pi^g\sigma.
\end{align*}
We define for each forest $g$ having $m$ roots and $n$ leaves and each permutation $\pi$ having $m$ strands
\begin{itemize}
    \item the forest $g^\pi$ whose $i$th tree is the $\pi(i)$th tree of $g$; and
    \item a permutation $\pi^g$ having $n$ strands formed by splitting the $i$th strand of $\pi$ into $|\Leaf(g_i)|$ many parallel strands for all $1\leqslant i\leqslant m$.
\end{itemize}
Note, this is a Brin-Zappa-Sz\'ep product (or bicrossed product), see \cite{Brin05}.
These definitions become transparent once we draw our forests.
The equality $\pi\circ g=g^\pi\circ \pi^g$ can be visualised by ``dragging'' the trees of $g$ down to the bottom of the diagram via the strands of $\pi$. For example:
\begin{align*}
\pi\circ g=\vcenter{\hbox{\Cabelling}}=g^\pi\circ\pi^g.
\end{align*}
Here we have
\begin{align*}
\pi=(12)\in\Sigma_3,~~g=Y_a\ot I\ot I,~~g^\pi=I\ot Y_a\ot I,~~\textnormal{and}~~\pi^g=(123)\in\Sigma_4.
\end{align*}
We identify the trivial permutation with $n$ strands with the trivial forest $I^{\ot n}$ with $n$ roots. 
By definition it follows that $(I^{\ot n})^g=I^{\ot m}$ if $g$ is a forest with $n$ roots and $m$ leaves. As a diagram the cyclic permutation $(12\cdots n)^m$, $1\leqslant m<n$, consists of $n-m$ strands going down to the right followed by $m$ strands going down to the left. Hence, if $\pi$ is cyclic, then $\pi^g$ is also cyclic for any forest $g$.
Once again one could as well define this algebraic structure using directly classes of diagrams (similarly than Artin braid groups and variants of it).

{\bf Extended FS categories.} Permutation diagrams permit us to extend FS categories into the associated \emph{$T$-type FS category} $\cF^T$ and the associated \emph{$V$-type FS category} $\cF^V$ (having the same objects as $\cF$) by adding a new isomorphism $\pi$ of the object $n$ to $\cF$ for every $n$-strand cyclic permutation or arbitrary permutation, respectively. That the above composition is associative where defined and unital (hence defining a category structure) is a straightforward exercise in unwrapping the definitions of $g^\pi$ and $\pi^g$. Hence, morphisms in $\cF^V$ are symmetric forests. Note the monoidal structure of $\cF$ extends to $\cF^V$ but not to $\cF^T$ as the tensor product (horizontal concatenation) of two cyclic permutations is no longer cyclic in general. We have containments as (wide) subcategories $\cF\subset\cF^T\subset\cF^V$. 

\subsection{Forest-skein groups} 
We now define the central objects of this article: FS groups.  We refer the reader to \cite[Section 3]{Brothier22} for further details. A (small) category $\cC$
\begin{itemize}
    \item is \emph{left-cancellative} if whenever there is an equality of morphisms $f\circ p=f\circ q$ of $\cC$, we have $p=q$; and
    \item has \emph{right-common-multiples} if for any morphisms $g,h$ of $\cC$ with the same target there are morphisms $k,l$ of $\cC$ such that $g\circ k=h\circ l$.
\end{itemize}
A category $\cC$ with these properties is an \emph{Ore category}. Note an FS category $\cF$ is Ore if and only if $\cF^T$ is Ore or if $\cF^V$ is Ore. This comes from the fact that the three categories $\cF,
\cF^T,\cF^V$ are equal up to their sets of isomorphisms. 
Given an Ore FS category $\cF^V$ we may construct its \emph{groupoid of fractions}. Let $\cD^V_{\cF}$ be the set of all triples $(f/\pi/g)$, where $f,g\in\cF$ are forests with the same number of leaves and $\pi$ is a permutation of their leaves. We think of $(f/\pi/g)$ as a diagram $f\circ\pi\circ g^{-1}$, where $g^{-1}$ denotes the diagram of $g$ reflected about $y=1/2$: hence a diagram with $f$ below, $\pi$ stacked in the middle, and the diagram of $g$ drawn upside down and stacked on top. 
A diagram is \emph{free} if $f$ and $g$ are free forests. Consider the equivalence $\sim$ on $\cD^V_\cF$ generated by ``growing relations''
\begin{align}\label{eq:grow}
(f/\pi/g)\sim(fp^\pi/\pi^p/gp)  
\end{align}
for all $(f/\pi/g)\in\cD^V_\cF$ and $p\in\cF$. This is coming from the equality of formal diagrams $f\pi g^{-1}=f\pi pp^{-1}g^{-1}=(fp^\pi)\pi^p(gp)^{-1}$. The class $[f/\pi/g]$ of $(f/\pi/g)$ modulo $\sim$ is called a \emph{fraction of forests} or a \emph{forest-pair diagram}.

\begin{example}\label{ex:fraction-trees}
Taking colour set $\{a,b\}$ and the skein relation 
\begin{align*}
Y_b(Y_b\ot I)=\Cleary=Y_a(I\ot Y_a)
\end{align*}
an example of a fraction of \emph{trees} (or tree-pair diagram) is given by the diagrams:
\begin{align}\label{fig:tree-diagram}
\GroupElementA
\quad\sim_{grow}\quad
\GroupElementB
\quad\sim_{skein}\quad
\GroupElementC
\end{align}
Here we have grown the triple $(Y_a/(12)/Y_b)$ into $(Y_a(I\ot Y_a)/(123)/Y_b(Y_a\ot I))$ using the relation \ref{eq:grow} where
\begin{align*}
p=Y_a\ot I=\ForestA~.
\end{align*}    
\end{example}

If $\pi$ is the identity permutation, we suppress the permutation and write $(f/g)$ and $[f/g]$ instead. Having left-cancellation and right-common-multiples in $\cF^V$ implies the formulae
\begin{align*}
[f/\pi/g]\cdot[g/\sigma/h]:=[f/\pi\sigma/g]\quad\textnormal{and}\quad[f/\pi/g]^{-1}:=[g/\pi^{-1}/f]
\end{align*}
determine a groupoid structure on the quotient $\Frac(\cF^V):=\cD^V_\cF/\sim$ that we call the \emph{fraction groupoid of $\cF^V$}. The source and target of $[f/\pi/g]$ are the number of roots of $g$ and the number of roots of $f$, respectively. Similarly, one constructs the fraction groupoids of $\cF$ and $\cF^T$ by considering fractions with no permutations or cyclic permutations, respectively. These groupoids naturally embed into $\Frac(\cF^V)$. Recall the \emph{isotropy group} of a groupoid $\cG$ at an object $x\in\cG$ is the group of all morphisms in $\cG$ having source and target equal to $x$.

\begin{definition}
Let $\cF$ be an Ore FS category. 
\begin{enumerate}
    \item The isotropy group of the fraction groupoid $\Frac(\cF^V)$ at the object $1$ is called the \emph{$V$-type FS group of $\cF$}. We denote this group $G^V_\cF$. 
    \item The isotropy group of the fraction groupoid $\Frac(\cF^T)$ at the object $1$ is called the \emph{$T$-type FS group of $\cF$}. We denote this group $G^T_\cF$.
    \item The isotropy group of the fraction groupoid $\Frac(\cF)$ at the object $1$ is called the \emph{$F$-type FS group of $\cF$}. We denote this group $G^F_\cF$. 
\end{enumerate}
Collectively, these are the \emph{FS groups of $\cF$}. 
\end{definition}

Unwrapping the definition of isotropy group at $1$, it follows that FS groups consist of fractions of \emph{trees} with a permutation between their leaves like in diagram \ref{fig:tree-diagram}. We always specify a family of FS groups with the single Ore FS category $\cF$ and leave implicit the construction of $\cF^T$ and $\cF^V$ strictly needed to build the $T$ and $V$-type FS groups.
There is also a braided version for FS groups similar to the braided Thompson group, \cite{Brin07,Dehornoy06}. We don't consider these in this article, see \cite[Sec.~1.6]{Brothier22} for details.

\subsection{Initial examples} 
Deciding if a given skein presentation produces an Ore FS category is a challenging problem. Fortunately, the first author has produced an extremely vast class of explicit examples, see \cite[Thm.~8.3]{Brothier22}.

\begin{theorem}\label{theo:Ore-categories}
Let $x$ and $y$ be any monochromatic trees with the same number of leaves and consider the FS category $\cF=\FS\la a,b|x(a)=y(b)\ra$ (Hence, there are two colours: $a$ and $b$, and one skein relation $x(a)=y(b)$. This means that $x(a)$ has shape $x$ with all interior vertices coloured by $a$ and within any forest the pattern $x(a)$ can be interchanged with $y(b)$, and vice versa).
Then, $\cF$ is left-cancellative. Moreover, any forest $f\in\cF$ can be grown (composed with some forest in $\cF$) into an $a$-coloured forest implying that $\cF$ has right-common-multiples. All in all, $\cF$ is an Ore FS category.
\end{theorem}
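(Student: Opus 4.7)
The plan is to prove the three conclusions in the order: (A) every forest in $\cF$ can be grown into an $a$-coloured forest; (B) right-common-multiples; (C) left-cancellativity. Assertion (B) follows easily from (A), while (A) and (C) require genuine combinatorial work.

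For (B), assume (A). Given $g, h \in \cF$ with the same number of roots, use (A) to find $p, q \in \cF$ such that $g \circ p \sim G$ and $h \circ q \sim H$, with $G, H$ both $a$-coloured forests having the same root counts as $g, h$ respectively. Since $G$ and $H$ contain no $b$-vertices, their underlying shapes are binary forests in the classical sense and admit a common right multiple: there exist $a$-forests $P, Q$ with $G \circ P = H \circ Q$ as coloured forests in $\FS\la a, b \ra$. This equality persists in $\cF$, so $g \circ (p \circ P) \sim G \circ P = H \circ Q \sim h \circ (q \circ Q)$, witnessing the desired right-common-multiple.

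For (A), I would induct on the number of $b$-coloured interior vertices of $f$. The base case is trivial. For the inductive step, select a \emph{topmost} $b$-vertex $v$, meaning the subtree of $f$ rooted at $v$ contains no $b$-vertex besides $v$ itself. The goal is to grow $f$ above the leaves descending from $v$ so that the subtree rooted at $v$ in the grown forest realises $y(b)$; the skein relation then converts this pattern to $x(a)$, strictly decreasing the $b$-count. The obstruction is that in general $v$'s subtree already contains $a$-coloured interior vertices that block direct formation of an all-$b$ pattern of shape $y$. The technical crux is to resolve this by a pre-processing step: grow $f$ aggressively above $v$'s subtree and use reverse applications of the skein $x(a) \to y(b)$ to upgrade suitably placed $a$-patches into $b$-patches, until the $b$-region enclosing $v$ contains a $y$-shape; firing the forward skein at $v$ then nets an overall decrease. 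Careful bookkeeping of the $b$-count through this recursive process closes the induction.

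For (C), orient the skein relation as $y(b) \to x(a)$. This yields a terminating rewriting system on $\FS\la a, b \ra$: since $x$ and $y$ have the same number of leaves (hence the same number of interior vertices), each rewrite removes all the $b$-vertices of one $y$-pattern and replaces them by $a$-vertices, strictly decreasing the total $b$-count. I would then verify local confluence by a critical-pair analysis: two overlapping occurrences of $y(b)$ must share $b$-vertices, and the finitely many overlap configurations, controlled by the shape of $y$, can each be shown to reduce to a common forest. Newman's lemma yields confluence, so each equivalence class has a unique normal form. Left-cancellativity then follows: if $f \circ p \sim f \circ q$ in $\cF$, both composites share the same normal form, and an analysis of the rewriting traces (using that rewrites occurring away from the interface between $f$ and $p, q$ act on disjoint regions and can be commuted into the upper factors) forces $p$ and $q$ to reduce to the same form, whence $p \sim q$. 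The principal obstacle is the pre-processing argument in (A), which requires a careful invariant to guarantee the net decrease of the inductive quantity; a secondary subtlety is the critical-pair analysis in (C), whose combinatorics depend on the specific tree $y$.
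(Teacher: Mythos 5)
The paper itself does not supply a proof of this theorem; it cites \cite[Thm.~8.3]{Brothier22}, so there is no in-text argument to compare your proposal against. Evaluating your proposal on its own terms, parts (A) and (C) contain genuine gaps, and the gap in (C) is fatal.

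\textbf{The rewriting system in (C) is not confluent.} Take the Cleary-type skein $x(a) = \lambda_2(a) = Y(a)(Y(a)\ot I)$ and $y(b) = \rho_2(b) = Y(b)(I\ot Y(b))$, and apply your orientation $y(b)\to x(a)$ to the right-vine $\rho_3(b) = Y(b)(I\ot Y(b)(I\ot Y(b)))$. There are two overlapping occurrences of $\rho_2(b)$, rooted at $\varnothing$ and at $\1$. Rewriting at $\varnothing$ yields $Y(a)\bigl(Y(a)\ot Y(b)\bigr)$; rewriting at $\1$ yields $Y(b)\bigl(I\ot Y(a)(Y(a)\ot I)\bigr)$. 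Each of these trees has exactly one $b$-caret, so neither contains a $\rho_2(b)$ pattern: both are irreducible. They have different shapes (balanced versus a right caret followed by a left vine), so they are distinct free coloured forests. Thus the critical pair does \emph{not} resolve, local confluence fails, and Newman's lemma does not apply. (The two irreducible trees are of course $\sim$-equivalent in $\cF$, so this does not contradict left-cancellativity; it only shows your proposed normal-form argument cannot establish it.) Left-cancellativity of $\cF$ must therefore be proved by a different mechanism than unique normal forms for this rewriting system.

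\textbf{The induction in (A) does not close as stated.} You induct on the number of $b$-vertices and claim that handling one topmost $b$-vertex nets a strict decrease, but this is false in general. With the same skein, start from $Y(b)(I\ot Y(a))$, which has $b$-count $1$ and whose unique (topmost) $b$-vertex is the root. To create a $\rho_2(b)$ pattern rooted there you must first make the vertex $\1$ a $b$-vertex, which forces a reverse skein $x(a)\to y(b)$ at $\1$ (after one grow), giving $\rho_3(b)$ with $b$-count $3$; firing the forward skein at $\varnothing$ then returns the $b$-count to $1$. The selected $b$-vertex has been removed but a new one has appeared, and the inductive quantity has not decreased. The process does terminate in this example after a further iteration, but the termination is not captured by the $b$-count; you would need a genuinely different (and unspecified) well-founded invariant. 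As written, the ``careful bookkeeping'' is the entire content of the claim and it is left unargued.

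Your reduction of (B) to (A) is correct: once both $g$ and $h$ are grown to $a$-coloured forests, a common multiple exists because monochromatic forests have right-common-multiples, and this equality of free $a$-forests persists in $\cF$. But since (A) and (C) are both incomplete, the proposal does not prove the theorem.
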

These FS categories and their groups are the focus for the remainder of this article. Let us briefly discuss some examples to get a sense of this class. Taking $x$ and $y$ both equal to a caret yields Thompson's groups $F,T,V$, \cite[Ex.~3.5]{Brothier22}. Taking $x$ to be the left-vine with $2$ carets and $y$ to be the right-vine with $2$ carets yields Cleary's irrational slope Thompson groups $F_\tau,T_\tau,V_\tau$, \cite[Sec.~3.6.3]{Brothier22}. These groups admit faithful action by piecewise affine \emph{bijections} on $[0,1]$ ($F_\tau$ acts by homeomorphisms but elements of $T_\tau$ and $V_\tau$ may have finitely many discontinuity points when acting on $[0,1]$). Moreover, $T_\tau$ and $V_\tau$ are virtually simple (having simple derived subgroup with finite index), \cite{Burillo-Nucinkis-Reeves22}. Taking $x$ and $y$ both left-vines yields FS groups that decompose as twisted permutational wreath products that are extensions of the Thompson groups,  \cite[Thm.~3.4]{Brothier23}. In contrast, these do not have a simple derived subgroup nor any obvious (non-Thompson group) simple subgroup. 

\subsection{Abelianisation} 

We now explain how ``counting colours'' gives a description of the abelianisation of $T$-type FS groups. Given a free tree $t$ over $a,b$, let $\chi(t)$ be the number of $b$-vertices in $t$.

\begin{theorem}\label{theo:abelianisation}
Consider the $T$-type FS group $G$ of $\FS\la a,b|x(a)=y(b)\ra$, where $x$ and $y$ are trees both having $n\geqslant1$ carets. Then
\begin{align*}
\ov\chi:G\to\Z_n,~[t/\pi/s]\mapsto\chi(t)-\chi(s)~\textnormal{mod}~n
\end{align*}
is a well-defined epimorphism and which induces an isomorphism on $G^{ab}=G/D(G)$.
\end{theorem}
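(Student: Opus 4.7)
The map $\ov\chi$ descends to a well-defined function on $G$ because the two generating equivalences on fraction classes preserve $\chi(t) - \chi(s) \bmod n$. Under growing $(t/\pi/s) \sim (tp^\pi/\pi^p/sp)$, both $\chi(t)$ and $\chi(s)$ gain the same number of $b$-vertices (the total $b$-count of the forest $p$, which is invariant under the reordering $p \mapsto p^\pi$). Under the skein move $x(a) \leftrightarrow y(b)$, applied inside either the top or the bottom tree, the relevant $\chi$ changes by $\pm n \equiv 0 \bmod n$. The homomorphism property follows by growing two fractions to a common denominator so that the telescoping $\chi(t) - \chi(s) + \chi(s) - \chi(s')$ reduces to $\chi(t) - \chi(s')$. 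Surjectivity is immediate from $\ov\chi([Y(b)/Y(a)]) = 1$, since the trivial permutation on two leaves is cyclic and the element lies in $G$.

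For the non-trivial inclusion $\ker \ov\chi \subseteq D(G)$, my plan hinges on identifying a large subgroup of $G$ lying in $D(G)$ and reducing to it. The natural candidate is the image $G_a \subseteq G$ of the canonical homomorphism $T \to G$ sending $[t/\pi/s] \mapsto [t(a)/\pi/s(a)]$, where $T$ is Thompson's group. This image is well defined (any equivalence of $a$-only fractions in $T$ comes from growing alone, which lifts to $\cF$) and sits inside $\ker \ov\chi$. Since $T$ is simple and non-abelian, its quotient $G_a$ satisfies $G_a = D(G_a) \subseteq D(G)$. It would then suffice to show that every element $g = [t/\pi/s] \in \ker \ov\chi$ admits a representative of the form $(u(a)/\sigma/v(a))$ with both trees monochromatic $a$. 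I envisage this as a two-step reduction: (i) writing $\chi(t)-\chi(s) = kn$, grow and skein to equalise $\chi(\ti t) = \chi(\ti s)$ on the nose; (ii) further grow and skein to eliminate every remaining $b$-vertex from both trees simultaneously.

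The main obstacle is step (ii). The skein relation $x(a) \leftrightarrow y(b)$ is a local rewriting rule: one can only delete a $b$-vertex by erasing a full $y(b)$-subpattern, and so a given tree must first be grown around each $b$-vertex to expose such a pattern. The resulting growings on numerator and denominator must be compatible so that after rewriting the two trees still have the same leaf count and the induced permutation is well defined. This is feasible thanks to the right-common-multiples property of the Ore FS category $\cF$ (Theorem \ref{theo:Ore-categories}), which guarantees that any two trees admit a common grown form; but converting this abstract existence into a concrete simultaneous $a$-reduction demands combinatorial bookkeeping. An alternative route --- setting up a finite presentation of $G$ in tailored generators and reading off $G^{ab}$ directly from the relations --- would bypass these diagrammatic manipulations at the cost of a substantial preliminary presentation theorem.
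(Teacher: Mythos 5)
The first half of your argument---well-definedness, the homomorphism property, and surjectivity---is correct and matches the paper. The problem is in your reduction for injectivity.

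You propose to show $\ker\ov\chi\subseteq D(G)$ by proving that every element of $\ker\ov\chi$ admits an entirely $a$-coloured representative $(u(a)/\sigma/v(a))$, i.e.\ lies in the Thompson copy $G_a\simeq T$ inside $G$. This claim is false for the paper's main examples, and no amount of ``combinatorial bookkeeping'' in your step (ii) can fix it. Observe that $G_a\subseteq\ker\ov\chi$ and $D(G)\subseteq\ker\ov\chi$ always hold; if your claim were correct one would get $\ker\ov\chi=G_a=D(G)$. But for the categories $\cF_n$ with $n\geqslant3$, the germ of $D(G)$ at $\ov\1$ is all of $\Ga^+=\Gr\la a,b\,|\,a^2=b^n\ra$ (by proposition~\ref{prop:canonical-germ}, items (1) and (4)), a group that is nonabelian and contains nonabelian free subgroups, whereas the germ of $G_a\simeq T$ at $\ov\1$ is the cyclic subgroup $\la a\ra\subset\Ga^+$. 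Hence $D(G)\not\subseteq G_a$, so $\ker\ov\chi\not\subseteq G_a$, and your reduction step cannot succeed. (Put differently, $\ker\ov\chi$ has finite index $n$ in $G$ while $G_a$ has infinite index, so the containment is impossible on index grounds alone.) The same obstruction already arises in the Cleary case $x=\lambda_2$, $y=\rho_2$, where $a$ generates a proper subgroup of the germ at $\ov\1$.

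The paper's route is more economical and avoids this issue: one observes that $G$ is generated by the perfect subgroup $G_a\simeq T$ together with the single element $[Y_b/Y_a]$, so $G^{ab}$ is cyclic generated by the image of $[Y_b/Y_a]$, and the surjection $G^{ab}\onto\Z_n$ is then an isomorphism once one verifies the single relation $[Y_b/Y_a]^n\in D(G)$. This is done by writing down explicit relations in $G$ (see [BS24a, Thm.~5.1]), rather than trying to represent arbitrary kernel elements with $a$-coloured diagrams. If you want to repair your argument, replace the false global claim with this one targeted computation.
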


Here is a quick sketch of the proof. Changing $(t/\pi/s)$ by a skein relation increases or decreases its number of $b$-vertices by $n$, so there is no effect modulo $n$. For any forest $f$ with the same number of roots as there are leaves of $t$ and $s$ we have that $f$ and $f^\pi$ have the same number of $b$-vertices. Hence, growing the triple $(t/\pi/s)$ into $(tf^\pi/\pi^f/sf)$ does not change $\chi(t)-\chi(s)$. It follows that $\ov\chi$ is a well-defined group morphism that is surjective as $\ov\chi([Y_b/Y_a])=1$ generates $\Z_n$.
Hence, $\ov\chi$ factors through an epimorphism $G^{ab}\onto\Z_n$. We show $G^{ab}\onto\Z_n$ is injective by showing $[Y_b/Y_a]^n\in D(G)$ using some relations that hold in $G$.
We refer the reader to \cite[Thm.~5.1]{Brothier-Seelig24a} for further details.  

\subsection{Finiteness properties}

Recall a group $\Gamma$ is of \emph{type $\mathrm{F}_\infty$} if it admits an Eilenberg-MacLane $K(\Gamma,1)$ complex with finite $n$-skeleton for all $n$. All $F$ and $V$-type FS groups can be interpreted as \emph{operad groups} of Thumann, see \cite[Sec.~3]{Thumann17}. Via a deep analysis, Thumann proved a very general topological finiteness result for operad groups, see \cite[Thm.~4.3]{Thumann17}. Hence, under a mild hypothesis on an Ore FS category $\cF$, Thumann's theorem applies and shows $G^F_\cF$ and $G^V_\cF$ are of type $\mathrm{F}_\infty$. One may then deduce the $T$-type group of such a category $\cF$ is type $\mathrm{F}_\infty$ by employing an argument of Brown and Geoghegan, see \cite[Thm.~6.4]{Brothier22} for a detailed explanation. In particular:

\begin{theorem}\cite[Cor.~7.5]{Brothier22} \label{theo:finiteness-properties}
For any monochromatic trees $x,y$ with the same number of leaves, the $F$, $T$, and $V$-type FS groups of $\FS\la a,b|x(a)=y(b)\ra$ are of type $\mathrm{F}_\infty$.
\end{theorem}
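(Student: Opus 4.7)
The plan is to apply the powerful machinery developed by Thumann for \emph{operad groups}, which already gives finiteness of the $F$- and $V$-type FS groups, and then bootstrap to the $T$-type case via a classical Brown--Geoghegan style argument. The underlying FS category $\cF = \FS\la a,b\mid x(a)=y(b)\ra$ is known to be Ore by Theorem \ref{theo:Ore-categories}, so the three FS groups are well-defined, and the fractions-of-trees description is available throughout.

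First I would translate the $F$- and $V$-type FS groups into Thumann's framework. Thumann associates to a suitable coloured operad $\cO$ a sequence of groups (the planar and symmetric operad groups of $\cO$), and these are constructed as isotropy groups of a fraction groupoid in precisely the same fashion as our FS groups. The skein presentation $\la a,b\mid x(a)=y(b)\ra$ gives a coloured operad with two colours and one relation, whose planar operad group recovers $G^F_\cF$ and whose symmetric operad group recovers $G^V_\cF$. The identifications are essentially by definition, since Thumann's fractions are again pairs of forests modulo growing and a relation set, with (or without) leaf permutations.

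Next I would verify the hypotheses of Thumann's main finiteness theorem \cite[Thm.~4.3]{Thumann17}. The condition needed (``colour-tame'' plus a local finiteness assumption on the generating operations) amounts, in our setting, to saying that there are finitely many colours, finitely many generating morphisms of each arity, and that the set of ways of growing any forest into a forest with a fixed number of leaves is controlled. For $\FS\la a,b\mid x(a)=y(b)\ra$ these are immediate: the colour set $\{a,b\}$ is finite, the generating operations are the two carets $Y_a,Y_b$, and the single skein relation suffices. Applying Thumann's theorem then yields that $G^F_\cF$ and $G^V_\cF$ are of type $\mathrm{F}_\infty$. This step is where I expect the main obstacle: translating the FS category literally into an operad of Thumann's type and checking his colour-tame and finiteness hypotheses carefully, though once the dictionary is set up, the verification is essentially mechanical for our single-relation presentation.

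Finally I would pass from the $F$- and $V$-type to the $T$-type via the Brown--Geoghegan trick (as in \cite[Thm.~6.4]{Brothier22}). The group $G^T_\cF$ sits naturally between $G^F_\cF$ and $G^V_\cF$, and one exhibits a contractible (or highly connected) complex on which $G^T_\cF$ acts with stabilisers and quotient controlled by the already-known $\mathrm{F}_\infty$ groups. More concretely, one uses the Stein--Farley-type complex of forest-pair diagrams to show that $G^T_\cF$ acts cocompactly through $n$-skeleta with stabilisers of type $\mathrm{F}_\infty$, and then invokes Brown's criterion \cite{Brown87} to conclude that $G^T_\cF$ itself is $\mathrm{F}_\infty$. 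This completes all three cases.
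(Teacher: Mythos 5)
Your proposal takes essentially the same route as the paper: the $F$- and $V$-type groups are handled by interpreting them as Thumann operad groups and invoking his finiteness theorem \cite[Thm.~4.3]{Thumann17}, and the $T$-type case is then deduced via the Brown--Geoghegan-style argument explained in \cite[Thm.~6.4]{Brothier22}. The paper simply cites the resulting statement as \cite[Cor.~7.5]{Brothier22} rather than re-deriving it, but the underlying argument matches yours step for step.
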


\subsection{Simplicity}\label{sec:simple}

The category $\cF$ underlying an $X$-type FS group, $X=F,T,V$, affords it an action $\al^X:G_\cF^X\act\fC_\cF$ on a profinite space (it is a Cantor space if $\cF$ is countable). This is the \emph{canonical action of $G_\cF^X$} and we will outline one basic construction of it for our main examples in subsection \ref{sec:dynamics}. One of the core results of \cite{Brothier-Seelig24a} is that this action controls the simplicity properties of $G^X_\cF$. In this article we need only the result for the $T$-type groups (and for a certain subgroup, see proposition \ref{prop:D(K)-simple}).

\begin{theorem}\cite[Thm.~3.7, Thm.~3.10]{Brothier-Seelig24a}\label{theo:simple}
Let $G$ be a $T$-type FS group and $\al:G\act\fC$ its canonical action. Then $D(G)$ is simple if and only if $\al$ is faithful.
\end{theorem}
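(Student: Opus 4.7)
The plan is to follow a classical Higman--Epstein style argument, splitting the biconditional into two complementary parts, and using the abelianisation computation of Theorem \ref{theo:abelianisation} together with dynamical properties of the canonical action $\alpha\colon G\curvearrowright\fC$.

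For the easier direction ($D(G)$ simple $\Rightarrow$ $\alpha$ faithful), I would argue by contrapositive. Suppose $K:=\ker(\alpha)\neq\{1\}$. Since $K$ is normal in $G$, the intersection $K\cap D(G)$ is normal in $D(G)$. By simplicity, it is either all of $D(G)$ or trivial. The first case would mean $D(G)\subseteq K$, so the whole derived subgroup acts trivially on $\fC$; I would rule this out by exhibiting an explicit non-trivial commutator of tree-pair diagrams in $G$ whose action on a cylinder subset of $\fC$ is clearly non-trivial (such elements exist abundantly in any $T$-type FS group since $D(G)$ contains, for example, conjugates of Thompson-like building blocks). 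In the second case, $K$ injects into $G^{ab}\cong\Z_n$ by Theorem \ref{theo:abelianisation}, so $K$ is a finite normal subgroup of $G$. A short argument would show that these groups admit no non-trivial finite normal subgroups (for instance, because they contain copies of torsion-free Thompson-like subgroups whose centralisers are trivial), forcing $K=\{1\}$.

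The harder direction ($\alpha$ faithful $\Rightarrow$ $D(G)$ simple) is where most of the work lies. Here I would invoke the Epstein-style commutator method. First, I would observe that $\alpha$ faithful on $G$ implies $\alpha$ faithful on $D(G)$, so any non-trivial element $n$ of a normal subgroup $N\trianglelefteq D(G)$ moves some point of $\fC$. Since $\fC$ is totally disconnected, I can find a clopen set $U\subset\fC$ with $n(U)\cap U=\emptyset$. The next step is to establish the key dynamical ingredients for the $T$-type action: (i) the action is \emph{minimal} (orbits of cylinders cover arbitrary clopens), and (ii) for each non-empty clopen $V\subset\fC$ the rigid stabiliser $\RStab_G(V)\cap D(G)$ is non-trivial and acts transitively on pairs of disjoint clopens inside $V$. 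These properties are natural consequences of the forest-growing dynamics: by growing a forest-pair, we can engineer an element supported in any prescribed cylinder. Given these, the commutator trick $[n,h]=nhn^{-1}h^{-1}$ with $h\in\RStab_G(U)\cap D(G)$ shows $N$ contains an element of $D(G)$ supported in $U\cup n(U)$, and a second application produces elements supported in any prescribed cylinder. Minimality plus a standard covering argument then yields $N=D(G)$.

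The main obstacle will be step (ii) above: carefully verifying that for a generic Ore FS category the $T$-type canonical action has rigid stabilisers rich enough for Epstein's commutator machinery to kick in. In particular, one needs a forest-diagrammatic recipe that, for any cylinder $U\subset\fC$, produces elements of $D(G)$ acting non-trivially on $U$ and fixing its complement pointwise. The cyclic permutations that distinguish $G^T$ from $G^F$ are what make this possible --- they allow one to realise partial cyclic permutations of leaves within a clopen patch --- but translating this intuition into a clean proof that works uniformly across all $T$-type FS groups requires a delicate analysis of the interaction between skein relations and leaf permutations. Once that lemma is in hand, the Higman--Epstein machinery closes out both implications, and the whole statement follows.
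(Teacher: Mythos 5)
Your split into two implications matches the logical structure, and your treatment of the Higman--Epstein direction ($\alpha$ faithful $\Rightarrow$ $D(G)$ simple) is essentially the route the paper indicates: this is the classical commutator argument, and your outline (minimality, rich rigid stabilisers in $D(G)$, two applications of the commutator trick) is the right skeleton. The paper itself only sketches this as ``well known'' and defers to \cite{Brothier-Seelig24a}, so I cannot compare details, but your plan is sound.

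The other direction is where there is a genuine gap. You label $D(G)$ simple $\Rightarrow$ $\alpha$ faithful as the ``easier direction''; the paper says the opposite, remarking that it ``makes heavy use of the diagrammatic calculus afforded by the underlying FS category.'' Your proposed shortcut does not substitute for that work, for two reasons. First, you invoke Theorem \ref{theo:abelianisation} to conclude $G^{\mathrm{ab}}\simeq\Z_n$, but that theorem is stated only for the special FS categories $\FS\la a,b\,|\,x(a)=y(b)\ra$; Theorem \ref{theo:simple} concerns an arbitrary $T$-type FS group, where $G^{\mathrm{ab}}$ need not be finite, so the step ``$K$ injects into a finite group'' is not available in general. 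Second, even in the special case, the reduction does not close: once you know $K\cap D(G)=\{1\}$, you get for free that every $k\in K$ satisfies $[g,k]\in K\cap D(G)=\{1\}$ for all $g\in G$, i.e.\ $K\subset Z(G)$. The remaining task is therefore precisely to show that $Z(G)\cap\ker(\alpha)=\{1\}$, which is not a routine ``no finite normal subgroups'' fact --- one cannot deduce it from the faithful action of the Thompson subgroup $T$, since central kernel elements act trivially and escape any dynamical centraliser argument. This is exactly the point at which the paper signals that a careful diagrammatic analysis is required, and your proposal waves past it with ``a short argument would show\dots''. Without that analysis the direction is unproved.

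In short: your treatment of the Higman--Epstein implication is acceptable in outline; your treatment of the converse implication relies on a theorem outside its stated scope and leaves the genuinely hard step (ruling out a non-trivial central kernel) unaddressed, and you have also inverted the paper's assessment of which direction is the difficult one.
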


The argument for the ``if'' direction is well known throughout the Thompson group literature and is based on classic ideas of Higman and Epstein, \cite{Higman54,Epstein70}. 
The ``only if'' direction makes heavy use of the diagrammatic calculus afforded by the underlying FS category. There are many examples of FS groups having non-faithful canonical action, see \cite[Ex.~4.4]{Brothier-Seelig24a}. Conversely, proving faithfulness of the canonical action of a given FS group is quite involved, see \cite[Sec.~6]{Brothier-Seelig24a} and subsection \ref{sec:faithfulness}.

\subsection{Two quotients of $F$-type FS groups}\label{sec:quotients}
Every $F$-type FS group admits two quotients analogous to the derivative maps for $F\act[0,1]$ at $0$ and $1$, see \cite[Sec.~3.2]{Brothier-Seelig24a}. 
We now briefly recall this construction. Fix a skein presentation $\la S|R\ra$ (recall $S$ is a set of colours and $R$ is a set of pairs of free trees over $S$) whose associated FS category $\cF=\FS\la S|R\ra$ is an Ore category and write $L$ for its $F$-type FS group. 
Given a \emph{free} forest $f$ over $S$ (i.e.,~an element of the free FS category $\FS\la S\ra$) we let $c^-(f)$ (resp.~$c^+(f)$) be the word over $S$ defined by reading colours along the path in the first (resp.~last) tree of $f$ from its root to its first leaf (resp.~last). For example, if
\begin{align*}
t=\germ,
\end{align*}
then $c^-(t)=ab$ and $c^+(t)=a$. By definition we have $c^\pm(f\circ g)=c^\pm(f)\cdot c^\pm(g)$. Let
\begin{align*}
\Ga^\pm:=\Gr\la S|c^\pm(R)\ra,
\end{align*}
where $c^\pm(R):=\{c^\pm(u)\cdot c^\pm(v)^{-1}:(u,v)\in R\}$. Of course $\Ga^\pm$ depends on $\la S|R\ra$, but we suppress this dependancy for lighter notations. Set
\begin{align*}
\ov c^\pm:L\to\Ga^\pm,~[t/s]\mapsto c^\pm(t)\cdot c^\pm(s)^{-1}.
\end{align*} 
Since $c^\pm$ is multiplicative, $\ov c^\pm$ doesn't change under growing $[t/s]$ into $[tf/sf]$. Since changing $[t/s]$ by a skein relation changes $\ov c^\pm([t/s])$ by an element of $c^\pm(R)$, it follows that $\ov c^\pm$ is well-defined group morphism. 
Finally, $\ov c^\pm$ is surjective as 
\begin{align*}
\ov c^-([\lambda_2(s)/\rho_2(s)])=ss\cdot s^{-1}=s\quad\textnormal{and}\quad \ov c^+([\rho_2(s)/\lambda_2(s)])=ss\cdot s^{-1}=s
\end{align*}
for any $s\in S$ and these elements generate $\Ga^\pm$. Hence, $\ov c^\pm:L\onto\Ga^\pm$ is a well-defined epimorphism. 

\begin{example}
Take $\cF$ with skein presentation being $S=\{a,b\}$ and $R$ the single relation 
\begin{align*}
\SkeinRelationThree    
\end{align*}
We obtain $\Ga^-=\Gr\la a,b| aa=b\ra\simeq \Z$ and $\Ga^+=\Gr\la a,b|aa=bbb\ra\onto\Z_2*\Z_3$.
We will later take advantage of $\Ga^+$ having such a large quotient to obstruct piecewise projective actions. 
Moreover, the groups $\Ga^-,\Ga^+$ will induce invariants for distinguishing our groups.
\end{example}

\section{The examples}\label{sec:examples}
The remainder of the article is focused on proving the \ref{theo:main}. We first present the examples that will witness the result. Recall that $\lambda_n$ and $\rho_n$ are the left- and right-vines with $n$ carets, respectively, and for any monochromatic tree $t$ and colour $a$ that $t(a)$ denotes the tree $t$ having each interior vertex coloured by $a$. For each $n\geqslant3$ let
\begin{align}\label{eqn:tree}
\tau_n:=Y\circ(\lambda_{n-2}\ot Y)
\end{align}
and define the FS category $\cF_n:=\FS\la a,b|\tau_n(a)=\rho_n(b)\ra$. For example the skein relation defining $\cF_3$ is given by the diagram \ref{fig:skein-example} and for $\cF_4$ and $\cF_5$ the skein relations are 
\begin{align*}
\SkeinRelationFour\quad\textnormal{and}\quad\SkeinRelationFive,
\end{align*}
respectively. By theorem \ref{theo:Ore-categories} each FS category $\cF_n$ is left-cancellative and has right-common-multiples. Hence, we may define the FS groups of $\cF_n$. 

\begin{notation}\label{notation}
We denote the $F,T,V$-type FS groups of $\cF_n$ by $L_n,G_n,M_n$, respectively, though our focus is on $G_n$. We may suppress the index $n$ if it is clear from context.
\end{notation}

\subsection{Description of the dynamics}\label{sec:dynamics}

In this section we study the dynamical aspects of the above groups. Recall $\fC:=\{\0,\1\}^\omega$ is the \emph{Cantor space}, which is totally ordered (lexiocographical order with $\0<\1$) and the order topology makes $\fC$ a compact, metrisable, totally disconnected space having no isolated points. It has a base of clopen intervals given by \emph{cones}: $w\cdot\fC=\{x\in\fC:w\cdot\ov\0\leqslant x\leqslant w\cdot\ov\1\}$, with $w\in\{\0,\1\}^*$. We will begin the section by giving a basic construction of the \emph{canonical action} $G_n\act\fC$ mentioned in subsection \ref{sec:simple} and for this we need certain \emph{infinite} partitions of $\fC$. Any FS group has a canonical action, though its construction is more involved in general, see \cite[Sec.~2]{Brothier-Seelig24a}. 
We then prove $G_n\act\fC$ is faithful. From the canonical action we then deduce a faithful action $G_n\act\bS$ by orientation-preserving homeomorphisms. We show $G_n\act\bS$ is the so-called \emph{McCleary--Rubin rigid action} for the group $G_n$ and calculate the groups of germs at every point in $\bS$. This calculation along with the McCleary--Rubin rigidity theorem allows us to distinguish every member of the family $(G_n)_{n\geqslant3}$.
We finish the section by describing the graph of one element of $G_3$.

\subsubsection{An infinite partition from an infinite tree}\label{sec:partition}

Fix $n\geqslant3$ and let $\fT_n$ be the infinite tree that is defined inductively by starting with the tree $\tau_n$ (defined in equation \ref{eqn:tree}) and gluing a copy of $\tau_n$ to the last leaf of the previous tree. For instance for $n=4$ we have
\begin{align}\label{fig:infinite-tree}
\tau_4=\Cell\quad\textnormal{and so}\quad \fT_4=\InfiniteQuasiRightVine.
\end{align}
The leaves of $\fT_n$ are totally ordered and are in bijection with $\N$. Recall that $\tau_n$ has $n+1$ leaves. If $\ell_j$ denotes the address of the $j$th leaf of $\tau_n$, then the address of the $i$th leaf of $\fT_n$ is given by $\mu_i=\1^{2m}\cdot\ell_j$,
where $i=mn+j$, $m\geqslant0$, $1\leqslant j\leqslant n$, and where $\1^{2m}$ means the word of length $2m$ with only $\1$'s. We don't express the dependence on $n$ in $\ell_j$ or $\mu_j$ as it will always be clear from context. It follows by definition that
\begin{align*}
\mu_1:=\ell_1=\0^{n-1}\quad\textnormal{and}\quad
\mu_{i+n}=\1\1\cdot\mu_i~\textnormal{for all}~i\geqslant 1.    
\end{align*}
The cones $(\mu_i\cdot\fC)_{i\geqslant1}$ furnish an infinite partition of $\fC\setminus\{\ov\1\}$. Precisely, $\cup_{i\geqslant 1} \mu_i\cdot \fC=\fC\setminus\{\ov\1\}$, where $\ov\1=\1\cdot\1\cdot\1\cdots$ is the maximum of $\fC$, and the family $(\mu_i\cdot\fC)_{i\geqslant1}$ is pairwise disjoint.
\subsubsection{Local actions}\label{sec:local-action} We now define \emph{local actions} for trees in $\cF_n$ using $\fT_n$ and its associated partition of $\fC\setminus\{\ov\1\}$. We start by defining some continuous order-preserving maps from $\fC$ to $\fC$ that are injective but not surjective. For all $x\in\fC$ let
\begin{align*}
A_\0(x):=\0\cdot x,\quad A_\1(x):=\1\cdot x,\quad B_\0(x):=\0^{n-1}\cdot x,
\end{align*}
and for all $i\geqslant1$ we define
\begin{align*}
B_\1(\mu_i\cdot x):=\mu_{i+1}\cdot x,\quad B_\1(\ov\1):=\ov\1,
\end{align*}
where recall that $\mu_i$ is the address of the $i$th leaf of $\fT_n$. Hence, the three first maps are rather basic while the $B_\1$ is more complex and acts by ``jumping'' from the $i$th cone to the $(i+1)$th cone in the partition given by the leaves of $\fT_n$.

{\bf Local action for free trees.} Let $t$ be a free tree and $i$ one of its leaves. Let $\eta_i$ be the unique path in $t$ from its root to the $i$th leaf, so $\eta_i=(\nu_1,\nu_2, \dots,\nu_{l+1})$, where $\nu_1=\varnothing$, $\nu_j$ is an interior vertex of $t$ for $1\leqslant j\leqslant l$, and $\nu_{l+1}$ is the address of the $i$th leaf of $t$. For each $1\leqslant j\leqslant l$ we have that $\nu_{j+1}=\nu_{j}\cdot \varepsilon_j$ for some $\varepsilon_j\in\{\0,\1\}$, i.e.,~the path $\eta_i$ either takes a left or right turn at each vertex. For each $1\leqslant j\leqslant l$ write $c_j\in\{a,b\}$ for the colour of the interior vertex $\nu_j$ in $t$.
To this vertex we associate the map $C^j_{\varepsilon_j}\in\{A_\0,A_\1,B_\0,B_\1\}$, where $C^j$ stands for the letter $A$ (resp.~$B$) if $c_j$ is $a$ (resp.~$b$). 
The \emph{local action of $t$ at $i$} is then
\begin{align}\label{eq:local-action}
\beta(t,i):=C^1_{\varepsilon_1}\circ C^2_{\varepsilon_2}\circ\cdots\circ C^l_{\varepsilon_l}:\fC\to\fC.
\end{align}
We freely identify such maps as words over $\{A_\0,A_\1,B_\0,B_\1\}$. For example, if
\begin{align*}
t=\LocalAction,
\end{align*}
then $\beta(t,1)=A_\0B_\0$, $\beta(t,2)=A_\0B_\1A_\0$, $\beta(t,3)=A_{\0}B_{\1}A_{\1}$, and $\beta(t,4)=A_\1$.

{\bf Local action for trees.} 
Recall $\lambda_n$ and $\rho_n$ are the monochromatic left and right vines with $n$ interior vertices, respectively, and that $\tau_n=Y\circ(\lambda_{n-2}\otimes Y).$ Moreover, recall we write $\tau_n(a)$ for the tree $\tau_n$ whose every interior vertex has been coloured by $a$. Equation \ref{eq:local-action} applied to $\tau_n(a)$ gives
\begin{align*}
\beta(\tau_n(a),1)=A_\0^{n-1},~\beta(\tau_n(a),i)=A_\0^{n-i}A_\1,~\beta(\tau_n(a),n)=A_\1A_\0,~\beta(\tau_n(a),n+1)=A_\1^2
\end{align*}
for all $1<i<n$ and for $\rho_n(b)$ we have
\begin{align*}
\beta(\rho_n(b),1)=B_\0,~\beta(\rho_n(b),j)=B_\1^{j-1}B_\0,~\beta(\rho_n(b),n+1)=B_\1^n
\end{align*}
for all $1<j\leqslant n$. The way we have defined the maps $A_\0,A_\1,B_\0,B_\1$ implies we have equality of the local actions $\beta(\tau_n(a),i)=\beta(\rho_n(b),i)$ for all $1\leqslant i\leqslant n+1$. We show this at the last leaf $i=n+1$. 
A similar argument applies to the other leaves. Since $\beta(\tau_n(a),n+1)=A_\1^2$ and $\beta(\rho_n(b),n+1)=B_\1^{n}$ for each $i\geqslant1$ and $x\in\fC$ we have that 
\begin{align*}
B_\1^{n}(\mu_i\cdot x)=\mu_{i+n}\cdot x
=\1\1\cdot \mu_i\cdot x
=A_\1^2(\mu_i\cdot x).
\end{align*}
The equalities follow similarly for the local actions at the rest of the leaves. This implies that local actions are well-defined for trees and not just for \emph{free} trees. Indeed, if $t$ is a free tree and $i$ is one of its leaves such that the path $\eta$ in $t$ from the root $\varnothing$ to the $i$th leaf (whose address is $\nu_i$) intersects a $\tau_n(a)$ or $\rho_n(b)$ subtree, then $\eta$ must pass through a unique leaf of $\tau_n(a)$ or $\rho_n(b)$, respectively. Suppose this is the $j$th leaf. Applying the skein relation to this $\tau_n(a)$ or $\rho_n(b)$ subtree changes a $\beta(\tau_n(a),j)$ or $\beta(\rho_n(b),j)$ subword in the local action $\beta(t,i)$ to $\beta(\rho_n(b),j)$ or $\beta(\tau_n(a),j)$, respectively. As these maps are equal local actions do not change under skein relations applied to $t$.

\subsubsection{Canonical action of an FS group}\label{sec:canonical-action} In this section we construct the canonical action for the $V$-type FS group $M$ of $\cF$ which restricts into actions on the other types. Firstly, for any tree $t$ with $l$ leaves the set of images of local actions $\{\beta(t,i)(\fC):1\leqslant i\leqslant l\}$ partitions $\fC$. This can be established via an induction on the number of carets of $t$ using the definitions of local actions. Now, given $g=[t/\pi/s]\in M$ we define $\al(g):\fC\to\fC$ on each image $\beta(s,i)(\fC)$ by the formula
\begin{align*}
\beta(s,i)(x)\mapsto\beta(t,\pi(i))(x),~x\in\fC.
\end{align*}
Since $A_\0,A_\1,B_\0,B_\1$ are homeomorphisms onto their images and each tree $t$ defines a partition of $\fC$, we deduce that $\al(g)$ is a well-defined homeomorphism and by construction this defines a group action $\al:M\act\fC$. 
We call it the \emph{canonical group action of $M$} and the restrictions of $\al$ to $G$ and $L$ are the \emph{canonical group actions of $G$ and $L$}, respectively. 

{\bf Diagrammatic calculus.} We can think of the canonical action as acting by ``local prefix replacement'' just like the standard action of Thompson's group $V$ on $\fC$, though our set of prefixes is richer. In terms of diagrams we think of $\beta(s,i)(x)$ as gluing the sequence $x\in\fC$ to the $i$th leaf of the tree $s$. The action of $[t/\pi/s]\in M$ on $\beta(s,i)(x)$ then amounts to gluing the diagram for $\beta(s,i)(x)$ on top of the diagram for $[t/\pi/s]=t\circ\pi\circ s^{-1}$. Cancelling the $s^{-1}\circ s$ we are left with $t\circ\pi$ with $x$ glued to the top of the $i$th strand of $\pi$. We then pull $x$ down to the leaves of $t$ via the $i$th strand of $\pi$. This results in $x$ glued to the $\pi(i)$th leaf of $t$, which is equal to the diagram for $\beta(t,\pi(i))(x)$. 

\begin{example}
For example, let $x\in\fC$, take the trees $s=Y_a$ and $t=Y_b$ (the unique carets coloured by $a$ and $b$, respectively), the first leaf $i=1$, the permutation $\pi=(12)$, and consider the group element $g=[Y_b/\pi/Y_a]$.
We obtain the following diagrammatic computation:
\begin{align*}
\al(g)(\beta(Y_a,1)(x))\quad=\quad\ActionOne\quad=\quad\ActionTwo\quad=\quad\ActionThree\quad=\quad\beta(Y_b,2)(x).
\end{align*}   
\end{example}

{\bf Dynamical characterisation of types of FS groups.} Consider the canonical action $\al:M\act\fC$ as constructed above. We have that $L$ (resp.~$G$) is the subgroup of $g\in M$ so that $\al(g)$ preserves the total order on $\fC$ (resp.~up to cyclic permutations), see \cite[Prop.~6.1]{Brothier22}. Moreover, $L$ is the subgroup of $g\in G$ 
such that $\al(g)$ fixes the endpoint $\ov\0\in\fC$, see the proof of item (3) of \cite[Prop.~6.2]{Brothier22}. 

{\bf Equivariant embedding of Thompson's groups.} The subgroup of all $a$-coloured diagrams inside $M$ is isomorphic to Thompson's group $V$, see \cite[Cor.~3.9]{Brothier22} for a proof in the $F$-case that easily adapts to the $V$-case. By the definition of $\al:M\act\fC$ we deduce that the restriction to this copy of $V$ is conjugate to the usual action of $V$ on Cantor space. We henceforth identify Thompson's groups $F,T,V$ inside $L,G,M$ by colouring interior vertices by $a$. In particular, we have the important observation:
\begin{observation}\label{obs:faithful}
Since the standard action of Thompson's groups on Cantor space is faithful, the canonical action $\al:M\act\fC$ restricted to $F\subseteq M$ is faithful.
\end{observation}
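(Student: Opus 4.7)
The observation is essentially immediate from the equivariant embedding paragraph stated just above it, so the plan is to simply unpack that remark. First I would recall the identification of Thompson's group $V$ as the subgroup of $M$ consisting of fractions $[t/\pi/s]$ where $t$ and $s$ are purely $a$-coloured trees, as established (in the $F$-case) in \cite[Cor.~3.9]{Brothier22} and as stated in the paragraph preceding the observation. Inside this copy of $V$ sits the usual copy of $F$, namely the elements where the permutation $\pi$ is trivial.

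Next I would verify that under the canonical action $\al:M\act\fC$ constructed in subsection \ref{sec:canonical-action}, the restriction to this copy of $V$ is conjugate to the standard prefix-replacement action of Thompson's $V$ on $\fC$. This is a direct check from the definitions of the local actions: for a purely $a$-coloured tree $t$ and a leaf $i$ with address $\nu_i\in\{\0,\1\}^*$, the local map $\beta(t,i)$ built from the generators $A_\0$ and $A_\1$ simply reads off the prefix $\nu_i$, so $\beta(t,i)(x)=\nu_i\cdot x$. Hence $\al([t/\pi/s])$ acts by the usual rule sending the cone $\nu_i\cdot\fC$ homeomorphically onto $\nu_{\pi(i)}\cdot\fC$, which is exactly the classical action of $V$ on $\fC$.

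Since the standard action of Thompson's $V$ on $\fC$ is known to be faithful, and conjugation preserves faithfulness, we conclude that $\al$ is faithful on the copy of $V$ inside $M$. The restriction to the subgroup $F\subseteq V$ is then a fortiori faithful, which is what we wanted.

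There is no real obstacle here: the content is entirely packaged in the equivariant embedding statement plus the well-known faithfulness of $V\act\fC$. The only step requiring a moment of care is checking that $\beta(t,i)$ for a purely $a$-coloured tree reduces to the prefix map $x\mapsto\nu_i\cdot x$, which is immediate from the definitions $A_\0(x)=\0\cdot x$ and $A_\1(x)=\1\cdot x$.
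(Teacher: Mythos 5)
Your proof is correct and follows exactly the same route as the paper: identify the copy of Thompson's $V$ (or $F$) as the $a$-coloured tree-pair diagrams, observe that on these the local maps $\beta(t,i)$ built from $A_\0,A_\1$ reduce to the prefix maps $x\mapsto\nu_i\cdot x$, so the canonical action restricts to the standard (faithful) action of $V$ on $\fC$. The only difference is that you spell out the computation of $\beta(t,i)$ that the paper leaves as a remark; this is a welcome but minor elaboration.
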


{\bf Dyadic rationals and rationals in Cantor space.} 
Let 
\begin{align*}
\fQ^-:=\{w\cdot\ov\0:w\in\{\0,\1\}^*\}\quad\textnormal{and}\quad\fQ^+:=\{w\cdot\ov\1:w\in\{\0,\1\}^*\}   
\end{align*}
which we call the set of \emph{left-} and \emph{right-dyadic rationals}, respectively. 
These correspond to the set of all left- and right-endpoints of all cones in $\fC$, respectively. It is well-known that these are the orbits of $\ov\0$ and $\ov\1$ under the usual action of Thompson's group $T$ as well as $V$. 
Now, by the definition of the canonical action, $M$ acts by locally order-preserving homeomorphisms of the Cantor space $\fC$. 
Hence, any element of $M$ will send left-endpoints to left-endpoints of cones (and similarly to right-endpoints of cones).
We deduce that $\fQ^\pm$ are globally preserved by $M$ and thus by $G$.
Since $T$ is already well-known to act transitively on $\fQ^\pm$ we deduce that $G$ acts transitively on $\fQ^\pm.$
We write the induced actions $\al^-:M\act\fQ^-$ and $\al^+:M\act\fQ^+$. We say $x\in\fC$ is \emph{rational} if $x=w\cdot\ov u$ for some words $w,u\in\{\0,\1\}^*$ with $u$ non-empty. Let $\fQ$ be the set of all rationals in $\fC$. In particular 
$\fQ^\pm\subset\fQ\subset\fC.$ 
Elements of $\fC\setminus\fQ$ are called \emph{irrational}.

\subsubsection{Faithfulness of canonical action}\label{sec:faithfulness}

We now prove the canonical group action $\al_n:M_n\act\fC$ is faithful for all $n\geqslant3$. 
Note that in general the canonical action of an FS group may \emph{not} be faithful --- a consequence of the diagrammatic calculus, see \cite[Ex.~4.4]{Brothier-Seelig24a}. 
The first part of the proof involves representing each $g\in M_n$ by a ``good'' tree-pair diagram. This was done previously in \cite[Sec.~6]{Brothier-Seelig24a} but we give a detailed proof here for convenience. The second part involves an analysis of the local actions.

{\bf Right-vine decomposition.} Any free tree $t$ can be uniquely decomposed as
\begin{align}\label{eq:decomposition}
t=t_1\circ\cdots\circ t_l,
\end{align}
where each forest $t_p$ is of the form $I^{\ot i_p}\ot\rho_{j_p}\ot I^{\ot k_p}$, where $\rho_{j_p}$ is a right-vine of length $j_p$ with some colouring, and $i_p<i_q$ if $1<q<p$, i.e.,~carets further left are drawn higher up. Note that $t_1=\rho_{j_1}$ as $t$ is a tree. This is a normal form for free trees and it comes from a complete rewriting system similar to the one for Thompson's monoid $F^+$, see for instance \cite[Sec.~2.2]{Dehornoy-Tesson19}. We call \ref{eq:decomposition} the \emph{right-vine decomposition of $t$}, $\rho_{j_p}$ the \emph{$p$th right-vine of $t$}, and the number of forests $l$ appearing in \ref{eq:decomposition} the \emph{right-vine number of $t$}.

{\bf Good trees.} Recall that $c^+(t)$ is the word over $\{a,b\}$ formed by reading the colouring along the unique path in $t$ from its root to its last leaf. A word over $\{a,b\}$ is \emph{good} if it is of the form $a^i\cdot w$, where $i\geqslant0$, and $w$ is either empty or a word starting with $b$ which does not contain $a^2$ or $b^n$ as a subword.
A free right-vine $\rho$ is \emph{good} if $c^+(\rho)$ is good and a free tree $t$ is \emph{good} if all of its right vines are good. In particular, any $a$-tree is good. In what follows we will say two free forests $f$ and $g$ are \emph{equivalent} if they are equal in $\cF$.

\begin{lemma}\label{lem:right-vine}
Up to growing by an $a$-forest (i.e.,~all interior vertices coloured $a$) any free right-vine is equivalent to a good tree all of whose vines have $a$-coloured roots. 
\end{lemma}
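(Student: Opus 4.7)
I would proceed by strong induction on the length $m$ of the colour word $w = c_1 c_2 \cdots c_m$ of the free right-vine $\rho$, controlling the right-vine decomposition of the grown tree at every step so that each right-vine acquires a good colouring beginning with $a$.

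The base case $m = 0$ is the trivial tree, which is vacuously good. For $m = 1$: if $\rho = Y_a$ there is nothing to do, while if $\rho = Y_b$, I would grow $\rho$ by $I \ot \tau_n(a)$ attached to its right leaf, apply the skein $\tau_n(a) = \rho_n(b)$ to the attached subtree to produce the monochromatic right-vine $\rho_{n+1}(b)$, and then apply the skein $\rho_n(b) = \tau_n(a)$ to the $\rho_n(b)$ subtree at the root, arriving at $\tau_n(a) \circ (I^{\ot n} \ot Y_b)$. A direct inspection of its right-vine decomposition exhibits a first vine $\rho_3$ with the colour word $aab$ (good since $b$ contains neither $a^2$ nor $b^n$ as a subword for $n \geq 3$) together with the $n-2$ single $a$-carets arising from the $\lambda_{n-2}(a)$ part of $\tau_n(a)$, so every vine is good and $a$-rooted.

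For the inductive step with $m \geq 2$, I split on the colour $c_1$ of the root caret. When $c_1 = a$, I decompose $\rho = Y_a \circ (I \ot \rho')$ with $\rho'$ the right-vine of length $m-1$ with colour word $c_2 \cdots c_m$. By the inductive hypothesis there is an $a$-forest $f$ such that $\rho' \circ f$ is equivalent to a good tree $T'$ with $a$-coloured roots, and growing $\rho$ by $I \ot f$ yields $\rho \circ (I \ot f) \sim Y_a \circ (I \ot T')$. Its right-vine decomposition agrees with that of $T'$ except that the root $a$-caret is prepended to the first vine of $T'$: if that vine has good colour word $v = a^i w'$ with $i \geq 1$, then the new first vine has word $a^{i+1} w'$, which is again good and starts with $a$; all remaining vines are inherited unchanged from $T'$.

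The main obstacle is the case $c_1 = b$, since growing by an $a$-forest alone cannot alter the colour of the root caret. Here my plan is to generalise the base-case construction for $Y_b$: grow $\rho$ by attaching $a$-subtrees carefully to selected leaves so that repeated applications of $\tau_n(a) \to \rho_n(b)$ extend the rightmost chain from the root with $b$-coloured carets, until the bottom $n$ carets of that chain are all $b$-coloured. One then applies the skein $\rho_n(b) \to \tau_n(a)$ at the root, replacing the offending root $b$-caret by $\tau_n(a)$, after which the tree begins with an $a$-coloured $\tau_n(a)$ and the remaining subtrees (hanging off the $\lambda_{n-2}(a)$ factor and the last leaf of $\tau_n(a)$) are handled by the case $c_1 = a$ or by further application of the inductive hypothesis to strictly shorter right-vines. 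The technical heart of this last step is checking that the grown $a$-forest and the sequence of skein moves can always be chosen so that the final right-vine decomposition has a good colouring on every vine; I expect this to proceed by a case analysis on the maximal initial $b$-block of $w$, separating $w = b^j$ with $j < n$, $w$ beginning with $b^n$, and $w$ beginning with $b^j a \cdots$ for $1 \leq j < n$.
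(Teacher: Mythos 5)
Your overall framing as an induction on the length of the colour word is a reasonable first instinct, and the cases $c_1=a$ and $w=b^j$ ($j<n$ or $j\geq n$) are handled correctly. But the crucial case $c_1=b$ with the colour word \emph{not} of the form $b^n\cdots$ is precisely where the real content of the lemma lies, and your sketch there has a genuine gap rather than a mere omission of detail.

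The difficulty is that the only skein move that converts $a$'s to $b$'s, namely $\tau_n(a)\to\rho_n(b)$, requires \emph{two consecutive} $a$-carets on the right path (together with a grown $\lambda_{n-2}(a)$). So if the word begins $bab\cdots$, the isolated $a$ at position $2$ cannot be converted directly; to create a second $a$ at position $3$ you would first need $b^n$ occupying positions $3,\dots,n+2$, which need not be the case, and trying to manufacture it pushes the problem further down the vine. Your plan ``grow and apply $\tau_n(a)\to\rho_n(b)$ until the bottom $n$ carets are $b$'' has no mechanism for clearing an isolated $a$ that sits among the first $n$ positions. A second, related problem is structural: after you perform a $\rho_n(b)\to\tau_n(a)$ conversion at the root, the object hanging off the last leaf of $\tau_n(a)$ is in general \emph{not} a right-vine (the intermediate grows and $\rho\to\tau$ conversions introduce left branches), so the inductive hypothesis --- which is stated only for right-vines --- cannot be invoked on it. The statement for general trees is Proposition~\ref{prop:semi-normal}, which is deduced \emph{from} the present lemma, so appealing to it here would be circular.

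The paper's proof avoids both obstacles by \emph{not} inducting on word length. It is a two-phase normalisation: first a top-down sweep that grows $\lambda_{n-2}(a)$ at every $a$-vertex and converts every $\tau_n(a)$ encountered into $\rho_n(b)$, which leaves a word of the form $b^{k_1}ab^{k_2}a\cdots ab^{k_p}$ in which every $a$ is isolated; then, after forcing $k_p\geq n$, a migration phase that uses the net identity $ab^n\leadsto b^na$ (implemented as one $\rho\to\tau$ followed by one grow-and-$\tau\to\rho$) to collect all full $b^n$-blocks at the root, where a final sequence of $\rho_n(b)\to\tau_n(a)$ conversions turns them into $a^{2m}$, yielding a good word with $a$-coloured root. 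It is the initial sweep --- absent from your sketch --- that makes the isolated-$a$ configuration tractable and the migration well-defined. If you want to pursue an inductive argument, you would at minimum need to first establish the sweep as a preliminary reduction (so that your induction only ever meets words with isolated $a$'s), and you would need to either strengthen the inductive statement to apply to more general trees than right-vines or verify carefully that the subtrees you recurse on really are right-vines; as written, neither is done.
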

\begin{proof}
For lighter notations in this proof we denote $c:=c^+$, $\lambda:=\lambda_{n-2}(a)$, $\tau:=\tau_n(a)$, and $\rho:=\rho_n(b)$. Note that if $t_0$ is a free tree which represents a tree $t$ of $\cF$ and that we change a subtree $\tau$ of $t_0$ into $\rho$ (or the other way around), then we obtain a new free tree $t_1$ which still represents the same tree $t$ of $\cF$. This comes from having the skein relation $(\tau,\rho)$ in $\cF$.
We will use this fact many times throughout. Let $r$ be a free right-vine. We are going to inductively define a finite family of free trees so that each tree is equivalent up to growing by an $a$-forest to the previous one. Before doing this, we need to prepare $r$. For each $a$-vertex $\nu$ of $r$ attach to its left-child $\nu\cdot\0$ (which is a leaf of $r$) the free left-$a$-vine $\lambda$. Call the resulting free tree $r'$. Hence, whenever $\nu$ and $\nu\cdot\1$ are $a$-vertices in $r'$, we have a $\tau$ subtree of $r'$ rooted at $\nu$. Starting at the root of $r'$ and working towards its last leaf, we inductively change every $\tau$ subtree we come across into a $\rho$ subtree. This process terminates in a free tree $r''$ equivalent to $r'$ whose colouring along its right side is
\begin{align*}
c(r'')=b^{k_1}\cdot a\cdot b^{k_2}\cdot a\cdots b^{k_{p-1}}\cdot a\cdot b^{k_{p}},
\end{align*}
where $p\geqslant1$ and each $k_i$ is strictly positive except we may have $k_1=0$ or $k_{p}=0$. Up to growing the last leaf of $r''$ by the $a$-tree $\tau$ and then changing this into $\rho$, we may assume $k_p\geqslant n$. Now divide each $k_i$ by $n$ so that $k_i=nl_i+\epsilon_i$ for some $l_i\geqslant0$ and $0\leqslant\epsilon_i<n$. Since $k_p\geqslant n$ we have $l_p\geqslant1$. Let $w_{1}:=b^{k_1}\cdot a\cdots a\cdot b^{k_{p-1}}$ and consider the subword $a\cdot b^{k_p}=a\cdot b^{nl_p}\cdot b^{\epsilon_p}$ at the end of $w$, i.e.,~$c(r'')=w_{1}\cdot a\cdot b^{k_p}$. Let $\nu$ be the interior vertex of $r''$ associated to the $a$ appearing in this subword. Since $l_p\geqslant1$ there is a $\rho$ subtree of $r''$ rooted at $\nu\cdot\1$. Let $r'''$ be the result of changing this $\rho$ subtree of $r''$ into $\tau$. We have
\begin{align*}
c(r''')=w_{1}\cdot a\cdot a^2\cdot b^{n(l_p-1)}\cdot b^{\epsilon_p}=w_{1}\cdot a^2\cdot a\cdot b^{n(l_p-1)}\cdot b^{\epsilon_p}.   
\end{align*}
We have now ``prepared'' our right-vine $r$. Note that the vertex $\nu\cdot\0$ is now a leaf of $r'''$ (it was not a leaf of $r''$). If we grow this leaf by $\lambda$ we create a $\tau$ subtree rooted at $\nu$. Let $r_{1,1}$ be the result of growing $\nu\cdot\0$ in $r'''$ by $\lambda$ and then changing the subsequent $\tau$ rooted at $\nu$ into $\rho$. It follows that
\begin{align*}
c(r_{1,1})=w_1\cdot b^n\cdot a\cdot b^{n(l_p-1)}\cdot b^{\epsilon_p}.
\end{align*}
Let $w_2:=b^{k_1}\cdot a\cdots a\cdot b^{k_{p-2}}$ and continue inductively this process of changing $\rho$ subtrees and growing by $\lambda$'s. This eventually yields a free tree $r_{1,l_p}$ satisfying
\begin{align*}
c(r_{1,l_p})=w_2\cdot a\cdot b^{k_{p-1}+nl_p}\cdot a\cdot b^{\epsilon_p}=w_2\cdot a\cdot b^{n(l_{p-1}+l_p)+\epsilon_{p-1}}\cdot a\cdot b^{\epsilon_p}.
\end{align*}
Note that $\epsilon_p$ may be $0$, which is okay at this stage. We now isolate the part of the tree $r_{1,l_p}$ associated with the subword $a\cdot b^{k_{p-1}+nl_p}$ of $c(r_{1,l_p})$ and perform inductively growing and changing operations similar to that above. Eventually, this yields a free tree $r_{2,l_{p-1}+l_p}$ satisfying
\begin{align*}
c(r_{2,l_{p-1}+l_p})=w_2\cdot b^{n(l_{p-1}+l_p)}\cdot a\cdot b^{\epsilon_{p-1}}\cdot a\cdot b^{\epsilon_p}.
\end{align*}
Now if $\epsilon_{p-1}>0$, then we continue as before but now on the subword $a\cdot b^{n(l_{p-2}+l_{p-1}+l_p)}\cdot b^{\epsilon_{p-2}}$. However, if $\epsilon_{p-1}=0$, then there is an intermediate step we need to make. In this case 
\begin{align*}
c(r_{2,l_{p-1}+l_p})=w_2\cdot b^{n(l_{p-1}+l_p)}\cdot a^2\cdot b^{\epsilon_p}
\end{align*}
and we have now have a $\tau$ subtree in $r_{2,l_{p-1}+l_p}$ whose right-side corresponds to the $a^2$ above. Changing this subtree into $\rho$ gives a free tree whose right-side colouring is
\begin{align*}
w_2\cdot b^{n(l_{p-1}+l_p)}\cdot b^n\cdot b^{\epsilon_p}=w_2\cdot b^{n(l_{p-1}+l_p+1)}\cdot b^{\epsilon_p}.
\end{align*}
This concludes the intermediate step, and we continue as before, but now on this free tree. Continuing inductively, we eventually arrive at a free tree $r_{p}$ satisfying
\begin{align*}
c(r_{p})=b^{nm}\cdot b^{\ti\epsilon_1}\cdot a\cdot b^{\ti\epsilon_2}\cdot a\cdots a\cdot b^{\ti\epsilon_q},
\end{align*}
where $m=\sum_{i=1}^p l_i+1-\epsilon_i\geqslant1$ since $l_p\geqslant1$. Now $r_{p}$ has a rooted subtree given by $\rho_{nm}(b)$ which can be changed into an $a$-tree. Call the resulting free tree $t$. It follows by construction that $t$ has root colour is $a$, satisfies
\begin{align*}
c(t)=a^{2m}\cdot b^{\ti\epsilon_1}\cdot a\cdot b^{\ti\epsilon_2}\cdot a\cdots a\cdot b^{\ti\epsilon_q},
\end{align*}
with each $0<\ti\epsilon_i<n$, except we may have $\ti\epsilon_q=0$, $q\leqslant p$, and all the right-vines of $t$ (in its right-vine decomposition) except for its first right-vine are $a$-coloured, in particular, all right-vines of $t$ have $a$-coloured roots. Hence, $t$ is of the form we desire and by construction, it is equivalent to our initial right-vine $r$ up to growing by an $a$-forest.
\end{proof}

\begin{proposition}\label{prop:semi-normal}
Up to growing by an $a$-forest any free tree is equivalent to a good tree all of whose right-vines have their root coloured $a$.
\end{proposition}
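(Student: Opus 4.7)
The plan is to proceed by induction on the right-vine number $l$ of the free tree $t$. The base case $l=1$ is immediate from Lemma 2.2 since $t$ is itself a right-vine.

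For the inductive step with $l\geqslant2$, I would decompose $t=\rho_{j_1}\circ(h_1\otimes\cdots\otimes h_{j_1+1})$, where $\rho_{j_1}$ is the first right-vine (at the root of $t$) and each $h_i$ is the subtree attached to the $i$th leaf of $\rho_{j_1}$. Since the right-vines of $h=h_1\otimes\cdots\otimes h_{j_1+1}$ partition the $l-1$ right-vines of $t$ other than $\rho_{j_1}$, each $h_i$ has strictly fewer than $l$ right-vines. By the induction hypothesis, for each non-trivial $h_i$ there are an $a$-forest $g_i$ and a good tree $r_i$ with $a$-coloured roots such that $h_i\circ g_i=r_i$ in $\cF$ (set $g_i=I=r_i$ for trivial $h_i$). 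Concatenating, the forest $g:=g_1\otimes\cdots\otimes g_{j_1+1}$ is an $a$-forest and $t\circ g=\rho_{j_1}\circ(r_1\otimes\cdots\otimes r_{j_1+1})=:t^{(1)}$ in $\cF$.

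The next step is to apply Lemma 2.2 to the first right-vine $\rho^\ast$ of $t^{(1)}$, which may properly extend $\rho_{j_1}$ through $r_{j_1+1}$ since the latter has $a$-coloured root. Lemma 2.2 furnishes an $a$-forest $f$ and good tree $r^\ast$ (with $a$-coloured roots) so that $\rho^\ast\circ f=r^\ast$ in $\cF$. To propagate $f$ through the rest of $t^{(1)}$, I would write $t^{(1)}=\rho^\ast\circ h^\ast$ and use the fact that every tree of $h^\ast$ is either a previously processed $r_i$ or a subtree of $r_{j_1+1}$ hanging off $\rho^\ast$; in both cases, these trees are good with $a$-coloured roots. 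One then invokes right-common-multiples of $\cF$ restricted to the $a$-coloured subcategory (Thompson's forest category) to further grow $h^\ast$ by an $a$-forest $u$ so that $h^\ast\circ u=f\circ v$ in $\cF$, yielding $t\circ g\circ u=r^\ast\circ v$.

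The hardest part will be ensuring two things simultaneously: first, that the multiplier $u$ can genuinely be taken to be an $a$-forest (which is where the $a$-coloured roots of the $r_i$'s are essential, so the growing only adds $a$-carets on top of $a$-rooted material), and second, that $r^\ast\circ v$ remains good with $a$-coloured roots. For the latter, the key observation is that growing a good tree with $a$-coloured roots by an $a$-forest only extends existing right-vines by $a$-carets at their bottoms (at most one per right-vine before interrupting the right-vine pattern), and goodness of a colouring $a^i\cdot w$ with $w$ empty or beginning with $b$ is preserved under such $a$-extensions provided we never append $a$ to a right-vine whose last letter is already $a$ — a condition that can be arranged by first growing the appropriate leaves using the same $\lambda_{n-2}(a)$-then-skein trick that proved Lemma 2.2.
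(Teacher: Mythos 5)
Your inductive structure and first step match the paper exactly: both proofs induct on the right-vine number, and both begin by applying the inductive hypothesis to the forest of subtrees hanging off the first right-vine $\rho_{j_1}$, producing $t^{(1)}=\rho_{j_1}\circ(r_1\otimes\cdots\otimes r_{j_1+1})$ with each $r_i$ good and having $a$-coloured right-vine roots. Where you diverge is in how you then fix up the first right-vine, and that step as written has a genuine gap. You propose to apply Lemma~\ref{lem:right-vine} to $\rho^\ast$ \emph{in isolation}, obtaining an $a$-forest $f$ with $\rho^\ast f$ good, and then to merge $f$ with the hanging forest $h^\ast$ by finding $u,v$ with $h^\ast u=f v$. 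But ``right-common-multiples of $\cF$ restricted to the $a$-coloured subcategory'' does not apply, because $h^\ast$ is \emph{not} $a$-coloured --- its trees are only good with $a$-coloured right-vine roots, and they will typically contain $b$-vertices (indeed, $c^+$ of such a tree is of the form $a^i\cdot w$ with $w$ starting with $b$). Since a good tree with a $b$-vertex on its right side has no $\rho_n(b)$ subword in $c^+$, growing such a tree by an $a$-forest can never remove that $b$-vertex, so one cannot in general force $h^\ast u$ to agree with $f v$ while $u$ ranges over $a$-forests. Nothing in the Ore-category axioms gives you the luxury of choosing the left multiplier from a prescribed subcategory. And even granting that $u$ and some $v$ exist, you would then need $v$ to be an $a$-forest (otherwise $r^\ast\circ v$ need not be good with $a$-coloured roots), which you assume in your final paragraph but never argue.

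The paper sidesteps all of this by never computing $f$ abstractly: instead it re-runs the algorithm of Lemma~\ref{lem:right-vine} \emph{inside} $t_1\circ g$. Each step of that algorithm attaches a left $a$-vine $\lambda_{n-2}(a)$ at a left-child of an $a$-vertex on the spine to manufacture a $\tau_n(a)$ pattern; when that left-child already carries a subtree from $g$, the fact that its right-vine roots are $a$-coloured means its entire left spine is $a$-coloured, so growing by further $a$-carets can extend that spine into the required $\lambda_{n-2}(a)$. Every move is thus either an $a$-caret growth or a skein move \emph{within} $t_1\circ g$, so the total multiplier is automatically an $a$-forest and there is never any $v$ to control. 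If you want to salvage your route, you would need to prove directly (by an induction on shape) that a good tree with $a$-coloured right-vine roots and an $a$-tree with the same number of leaves admit a common right multiple with both multipliers $a$-forests; given that $h^\ast$ contains $b$-vertices in fixed positions that cannot be dissolved by $a$-growth alone, I do not see how to make that true, and I would recommend adopting the in-place argument instead.
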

\begin{proof}
We prove this by induction on right-vine number. We have shown the assertion is true for trees of right-vine number $1$ (i.e.,~right-vines) in lemma \ref{lem:right-vine}. Assume the assertion is true for all trees having right-vine number no greater than $k$ and let $t$ have right-vine number $k+1$. Let $t=t_1\circ t_2\circ\cdots\circ t_l$ be the right-vine decomposition of $t$ and denote $f:=t_2\circ\cdots\circ t_l$. It follows that all trees in the forest $f$ have no more than $k$ right-vines. Hence, by the inductive hypothesis, we may pick a $a$-forest $f'$ such that $ff'$ is equivalent to a forest $g$ all of whose trees are good and whose right-vines all have root colour $a$. 
We now want to grow $t_1\circ g$ by $a$-forests so as to change $c^+(t)$ into a good word. We would like do this in the same way as in the proof of lemma \ref{lem:right-vine}, however, we don't have the freedom to grow \emph{any} leaves of the right-vine $t_1$ owing to the possibility of a non-trivial tree from $g$ already being attached to said leaf. However, since we have arranged for all the right-vines of all the trees of $g$ to have root colour $a$, up to growing by more $a$-carets, we can guarantee to have $\tau_n(a)$ subtrees inside $t_1\circ g$ where we need them as in the proof of lemma \ref{lem:right-vine}. Hence, we are able to grow $t$ by an $a$-tree into a good tree all of whose vines have root colour $a$, as required.
\end{proof}

{\bf Seminormal form.} A free diagram $(t/s)$ is a \emph{seminormal form} if $s$ is an $a$-tree and $t$ is a good tree. A consequence of proposition \ref{prop:semi-normal} is that every element $g\in L$ is represented by a seminormal form. 
Indeed, if $(t/s)$ represents $g$, then by theorem \ref{theo:Ore-categories} we may pick some forest $f\in\cF$ such that $sf$ is equivalent to an $a$-tree. Now, by proposition \ref{prop:semi-normal} we may pick an $a$-forest $f'$ such that $tff'$ is a good tree. Since $sff'$ is still equivalent to an $a$-tree, we have that $(tff'/sff')$ is a seminormal form that represents $g$.

\begin{theorem}\label{theo:faithful}
The action $\al:M\act\fC$ is faithful. 
\end{theorem}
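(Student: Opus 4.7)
The plan is to show that if $g \in M$ satisfies $\al(g) = \id$, then $g = 1$. Since $\al(g)$ trivially preserves the order on $\fC$ and fixes $\ov\0$, the dynamical characterisation of $L \subset M$ from subsection \ref{sec:canonical-action} gives $g \in L$. By the seminormal form discussion following Proposition \ref{prop:semi-normal}, I would represent $g = [t/s]$ with $s$ an $a$-tree and $t$ a good tree. Unwinding the definition of $\al$, the hypothesis $\al(g) = \id$ becomes the equality of local actions $\beta(t, i) = \beta(s, i)$ as self-maps of $\fC$ for every leaf $i$. Since $s$ is an $a$-tree, each $\beta(s, i)$ is the prepend map $x \mapsto \nu^s_i \cdot x$, where $\nu^s_i \in \{\0,\1\}^*$ is the address of the $i$th leaf of $s$.

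The heart of the proof is the following combinatorial claim: \emph{if $t$ is a good tree such that every $\beta(t, i)$ is a prepend map, then $t$ admits an $a$-tree representative in $\cF_n$.} Granting this, since $s$ is already an $a$-tree whose local actions agree with those of $t$, the two $a$-trees have the same set of leaf addresses; hence $t = s$ in $\cF_n$ and $g = 1$. Equivalently, the claim exhibits $g$ as an element of the $a$-coloured copy of Thompson's $F$ sitting in $L$, and Observation \ref{obs:faithful} then immediately yields $g = 1$.

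Proving the claim is the main obstacle. I would argue by contrapositive: assuming $t$ is good with at least one $b$-vertex, exhibit a leaf $i$ such that $\beta(t, i)$ is not a prepend. The guiding observation is that among $A_\0, A_\1, B_\0, B_\1$, only $B_\1$ is not a prepend in isolation (we have $B_\0 = A_\0^{n-1}$ trivially so), and the skein relation $\tau_n(a) = \rho_n(b)$ is precisely the assertion that the words $\beta(\rho_n(b), j)$ in $B_\0, B_\1$ equal the prepend words $\beta(\tau_n(a), j)$. In other words, $B_\1$'s can be ``absorbed'' into a prepend only when they sit inside a complete $\rho_n(b)$ subtree of $t$. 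Since $t$ is good, no right-vine of $t$ contains $b^n$ along its right side, so no such $\rho_n(b)$ subtree is present. A leaf whose path descends through a $b$-vertex at a strategic position should therefore yield a $\beta(t, i)$ with an un-absorbable $B_\1$.

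Making this rigorous likely requires an induction on the right-vine decomposition $t = t_1 \circ \cdots \circ t_l$, choosing a leaf passing through the lowest bad $b$-vertex, and tracking the image of $\beta(t, i)$ against the partition $(\mu_j \cdot \fC)_{j \geq 1}$ of $\fC \setminus \{\ov\1\}$; the point is that whenever a $B_\1$ is applied without being followed by the full $B_\1^{j-1} B_\0$-pattern dictated by the skein relation, the resulting map fails to send all of $\fC$ into a single cone with uniform prefix, ruling out a prepend description.
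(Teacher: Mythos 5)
Your opening reduction is exactly the paper's: $\ker(\al)\subset L$ by the order-preserving characterisation, then pass to a seminormal form $(t/s)$ with $s$ an $a$-tree and $t$ good, so that the kernel condition becomes $\beta(t,i)=\beta(s,i)$ for every leaf $i$. Isolating the remaining difficulty as a combinatorial claim about a \emph{single} good tree --- if every $\beta(t,i)$ is a prepend then $t$ has no $b$-vertex, hence is itself an $a$-tree --- is a genuinely different packaging from the paper: there the reduction runs through minimality of the \emph{pair} $(t/s)$, where the splitting $t=Y_a(t'\ot t'')$, $s=Y_a(s'\ot s'')$ uses the kernel condition to guarantee that $t'/s'$ and $t''/s''$ are well-defined smaller kernel elements. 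Your formulation bypasses the pair entirely and is strictly cleaner in that respect, and your ``Granting this'' step correctly yields $t=s$ in $\cF_n$.

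The gap is in the proof of the claim, which you leave at the level of a sketch. Two pieces need to be nailed down. First, ``choosing a leaf through the lowest bad $b$-vertex'' and ``induction on the right-vine decomposition'' are not yet a well-defined reduction; what actually works is an induction on the number of carets of $t$, splitting $t=Y_a(t'\ot t'')$ whenever the root is $a$-coloured (goodness is inherited by $t'$ and by $t''$, and prepending $A_\0$ or $A_\1$ preserves non-prependness), which reduces to the case where the root of $t$ is $b$-coloured. At that point the last leaf suffices, its local action $W$ being a word in $\{A_\1,B_\1\}$ starting with $B_\1$ and containing no $A_\1^2$ or $B_\1^n$ by goodness, and one observes that any prepend built only from $A_\1,B_\1$ must equal some $A_\1^k$ since it fixes $\ov\1$. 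Second, the assertion that such a $W$ is not $A_\1^k$ is precisely where the paper does real work: it evaluates $W$ on explicit inputs $\0\1\cdot x$ and $\1\0\1\cdot x$ and compares the number of $\0$'s produced with what $A_\1^k$ can yield. Your phrase ``the resulting map fails to send all of $\fC$ into a single cone with uniform prefix'' names the conclusion but does not substitute for this calculation, which is essentially unavoidable and identical in your route and the paper's. Your heuristic that $B_\1$'s absorb only inside a full $\rho_n(b)$ pattern is the right intuition, and goodness (no $b^n$ subword along right-vines) is exactly what forbids such patterns, but turning that intuition into a proof is the explicit word computation you have omitted.
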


\begin{proof}
Recall $L,G,M$ are the $F,T,V$-type FS groups of $\cF$. First we observe that $\ker(\al)\subset L$ as any element of $\ker(\al)$ preserves the total order on $\fC$, which characterises $L$ inside $M$, see \cite[Prop.~6.1]{Brothier22}. Let $\cB$ be the set of all seminormal forms representing elements of $\ker(\al)$ that contain at least one $b$-vertex. Suppose for a contradiction that $\cB$ is non-empty and let $(t/s)\in\cB$ have minimal number of carets among elements of $\cB$. Let $t=t_1\circ\cdots\circ t_l$ be the right-vine decomposition for $t$. Let us first make the observation that the local action of $t$ at its last leaf is the same as the local action of $t_1$ at its last leaf. Now, by definition $c^+(t_1)=a^i\cdot w$, where $w$ is either empty or starts with $b$ and contains no $a^2$ or $b^n$ subword. If $i>0$, then we can construct an element of $\cB$ with fewer carets. Indeed, if $i>0$, then we may write $t=Y_a(t'\ot t'')$ and $s=Y_a(s'\ot s'')$. Since $(t/s)$ represents a kernel element, we must have that $t'$ and $s'$ as well as $t''$ and $s''$ have the same number of leaves. Hence $(t'/s')$ and $(t''/s'')$ are well-defined diagrams and they must represent elements of the kernel since $(t/s)$ does. Moreover, both diagrams have fewer carets than $(t/s)$, at least one of those diagrams contains a $b$-vertex, and both are in seminormal form. This contradicts minimality of $(t/s)$, so we must have $i=0$. In particular, $w$ cannot be empty (as then $t_1$ would have no interior vertices). Now $w$ is a word that starts with $b$ and contains no $a^2$ or $b^n$ subword and $c^+(t_1)=w$. Either $w$ does not contain an $a$ in which case $w=b^{i}$ for some $0<i<n$ or it does contain an $a$ and it is of the form $w=b^{i_1}(ab^{i_2})(ab^{i_3})\cdots(ab^{i_k})a^{\varepsilon}$, where each $0<i_j<n$, and $\varepsilon\in\{0,1\}$. Let $\ell$ be the number of leaves of $t_1$. We claim the local action of $t_1$ at $\ell$ never acts like the local action of an $a$-tree at its last leaf, i.e.,~like $A_\1^k$ for some $k>0$. To see this we will use the definition of the local actions outlined in subsection \ref{sec:local-action}. By definition, the local action $\beta(t_1,\ell)$ is the word $W$ over $\{A_\1,B_\1\}$ formed by substituting $A_\1$ for $a$ and $B_\1$ for $b$ into the word $w$. In the first case $W=B_\1^{i}$ for $0<i<n$ in which case
\begin{align*}
B_\1^{i}(\1\0\cdot x)=
\begin{cases}
\1\1\0^{n-1}\cdot x&\textnormal{if $i=1$,}\\
\1\1\0^{n-i}\1\cdot x&\textnormal{otherwise,}
\end{cases}
\end{align*}
for any $x\in\fC$. Since $n\geqslant3$ and $0<i<n$ we have that $B_\1^i(\1\0\cdot x)$ always contains a $\0\0$ whereas $A_\1^k(\1\0\cdot x)$ may not. Hence, $B_\1^i\not=A_\1^k$ for any $k>0$. The other case is
\begin{align*}
W=B_\1^{i_1}(A_\1B_\1^{i_2})(A_\1B_\1^{i_3})\cdots(A_\1B_\1^{i_k})A_\1^{\varepsilon},    
\end{align*}
where each $0<i_j<n$, and $\varepsilon\in\{0,1\}$. Suppose $\varepsilon=0$ and write $\nu_p$ for the $p$th leaf of the left-vine of length $n-2$. Since $A_\1^2=B_\1^n$ commutes with $A_\1$ and $B_\1$ we have that $Z(\1\1\cdot x)=\1\1\cdot Z(x)$ for any $Z\in\{A_\1,B_\1\}^*$ and $x\in\fC$. Now, we deduce
\begin{align*}
W(\0\1\cdot x)&=B_{\1}^{i_1}(A_\1B_\1^{i_2})\cdots(A_\1B_\1^{i_k})(\1\1\1\0\cdot\nu_{i_k}\cdot x)\\
&=\1\1\cdot B_{\1}^{i_1}(A_\1B_\1^{i_2})\cdots(A_\1B_\1^{i_{k-1}})(\1\0\cdot\nu_{i_{k}}\cdot x)\\
&\hspace{2.25mm}\vdots\\
&=\1^{2k}\0\cdot\nu_{i_1}\nu_{i_2}\cdots\nu_{i_k}\cdot x,
\end{align*}
for all $x\in\fC$. Similar to as above, $x$ can be chosen so that the sequence $W(\0\1\cdot x)$ can have more $\0$'s than the sequence $A_\1^k(\0\1\cdot x)$ for any $k>0$. Hence, in this case we have $W\not=A_\1^k$ for any $k>0$. If $\varepsilon=1$, then we input a sequence of the form $\1\0\1\cdot x$ and are able to conclude similarly as the previous case. All in all we have shown that the local action of $t_1$ at its last leaf (which is equal to the local action of $t$ at its last leaf) is never equal that of an $a$-tree, hence $(t/s)$ cannot be in $\cB$, which is a contradiction.

As such, $\cB=\varnothing$. The seminormal forms that remain to represent elements of $\ker(\al)$ are those diagrams containing only $a$-vertices. However, by observation \ref{obs:faithful} no non-trivial element made from $a$-carets can be in $\ker(\al)$. Hence, the only diagrams representing elements of $\ker(\al)$ are of the form $(t/t)$ and all such represent the identity element. This shows $\ker(\al)=\{e\}$, so the canonical action $\al_n:M_n\act\fC$ is faithful for all $n\geqslant3$.
\end{proof}

\subsubsection{McCleary--Rubin rigid actions}\label{sec:rubin-FS} McCleary and Rubin outline mild conditions for a group to admit a canonical action which can be constructed only from the algebraic data of the group itself, see \cite{McCleary78,Rubin89,McCleary-Rubin05,Belk-Elliott-Matucci25}. Such a group then enjoys powerful dynamical invariants such as groups of germs and orbits. These results have been applied to distinguish many Thompson-like groups, see for instance \cite{Bleak-Lanoue10}. We use a variant of the Rubin--McCleary theorem adapted actions of groups on the circle by orientation-preserving homeomorphisms (henceforth OPH action).

A group action on the circle $\Ga\act\bS$ is \emph{McCleary--Rubin rigid} or \emph{rigid} for short if it is faithful, OPH, and satisfies the following two properties:
\begin{itemize}
    \item There exists a dense subset $D\subset\bS$ such that for any positively oriented triples $(x,y,z),(x',y',z')\in D^3$ there exists $g\in\Ga$ such that $(g(x),g(y),g(z))=(x',y',z')$.
    \item There exists a non-trivial $g\in\Ga$ that fixes a non-empty open set.
\end{itemize}

The first property is called ``being 3-o-transitive on $D$'' and the second ``having an element of bounded support''. One can deduce the following from \cite[Thm.~7.20]{McCleary-Rubin05}.
\begin{theorem}\label{theo:C-rigid}
Let $\ga_1:\Ga_1\act\bS$ and $\ga_2:\Ga_2\act\bS$ be two rigid actions. If $\phi:\Ga_1\to\Ga_2$ is an isomorphism, then there exists a unique homeomorphism $\Phi:\bS\to\bS$ such that
\begin{align*}
\ga_2(\phi(g))=\Phi\circ\ga_1(g)\circ\Phi^{-1} \text{ for all } g\in\Ga_1.
\end{align*}
\end{theorem}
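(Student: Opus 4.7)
The plan is to deduce this directly from the McCleary-Rubin reconstruction theorem [McCleary-Rubin05, Thm.~7.20], as the excerpt indicates. That theorem asserts that, under a ``locally moving'' assumption together with sufficient transitivity of the action, every isomorphism of acting groups is spatially implemented by a unique conjugating homeomorphism of the underlying spaces. The two rigidity hypotheses in the statement are precisely tailored to reproduce these conditions in the $1$-dimensional, orientation-preserving setting, so the argument is essentially a translation exercise.

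The main step I would carry out is verifying the locally-moving condition: for each $i \in \{1,2\}$ and every non-empty open $U \subset \bS$, there exists a non-trivial element of $\Ga_i$ fixing $\bS \setminus U$. Starting from the element $h \in \Ga_i$ of bounded support guaranteed by rigidity, its fixed set $\Fix(h)$ contains a non-empty open arc, so the support of $h$ is contained in some proper closed arc $V \subsetneq \bS$. I would then pick $a, c \in D$ in the open arc complementary to $V$ so that $V \subset (a, c)$ in the positive orientation, a point $b \in D$ strictly between them, and three points $a', b', c' \in D \cap U$ tightly packed in positive order. By $3$-o-transitivity of $\ga_i$ on $D$, there is $\sigma \in \Ga_i$ carrying $(a,b,c)$ to $(a',b',c')$; since $\sigma$ is orientation-preserving, it maps the arc $(a,c)$ into $(a',c') \subset U$, and hence $\sigma h \sigma^{-1}$ is a non-trivial element of $\Ga_i$ supported inside $U$. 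The remaining ``sufficient transitivity'' hypothesis required by McCleary-Rubin (for instance, density of orbits) is automatic from $3$-o-transitivity on the dense set $D$.

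With both hypotheses verified, McCleary-Rubin applied to the isomorphism $\phi : \Ga_1 \to \Ga_2$ supplies a unique homeomorphism $\Phi : \bS \to \bS$ satisfying $\ga_2(\phi(g)) = \Phi \circ \ga_1(g) \circ \Phi^{-1}$ for all $g \in \Ga_1$, which is exactly the conclusion. The main obstacle, if any, is purely bookkeeping: confirming that the formulation of [McCleary-Rubin05, Thm.~7.20] applies verbatim in our $1$-dimensional, orientation-preserving framework rather than requiring a small adaptation (e.g.~replacing ``order-preserving'' by ``cyclic-order-preserving''); once this is in hand, both existence and uniqueness of $\Phi$ are immediate from the quoted theorem.
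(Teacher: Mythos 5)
Your proposal is correct and matches the paper's approach: the paper itself gives no proof and simply cites \cite[Thm.~7.20]{McCleary-Rubin05}, so the expected content is exactly the verification of its hypotheses (locally moving and sufficient transitivity) that you supply. Your conjugation argument using $3$-o-transitivity on $D$ to move the bounded-support element into an arbitrary open arc is the standard way to check the locally-moving condition, and the rest is bookkeeping as you say.
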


As a rigid action $\Ga\act X$ is unique up to conjugation, we may call $\Ga\act X$ ``the'' rigid action of $\Ga$.

{\bf The rigid action of a $T$-type FS group.} We consider the circle $\bS$ as $\R/\Z$. The map $\Sigma:\fC\to \bS$ defined on $\fC\setminus\{\ov\1\}$ by the formula
\begin{align}\label{eqn:Cantor-to-circle}
\Sigma(x_1x_2\cdots):=\sum_{k\geqslant1}\frac{x_k}{2^k},
\end{align}
and otherwise by $\Sigma(\ov\1)=0$, is continuous, cyclic order-preserving, and surjective. Setting $\Z[\frac{1}{2}]:=\{
m 2^{-k}:m,k\in\Z\}$ we call $\Q_2:=\Z[\frac{1}{2}]/\Z$ the set of all \emph{dyadic rationals in $\bS$}. The preimage of a non-zero $p\in\Q_2$ under $\Sigma$ is of the form
\begin{align*}
\{w\cdot\0\ov\1,w\cdot\1\ov\0\}
\end{align*}
for some $w\in\{\0,\1\}^*$. Otherwise $\Sigma^{-1}(0)=\{\ov\0,\ov\1\}$ and if $p\not\in\Q_2$ its preimage is a singleton. By definition $\fQ^-$ is the set of infinite binary strings eventually constant equal to $\0$.
Hence, $\Sigma(\fQ^-)$ is the set of all finite sums as above and thus is equal to all dyadic rationals of the circle.
Similarly, if $w\in \fQ^+$, we can decompose it as $w=v\cdot \overline \1$ and thus $$\Sigma(w)=\Sigma(v) + \frac{\Sigma(\ov\1)}{2^l}=\Sigma(v)+2^{-l}$$ where $l$ is the length of $v.$
Hence, we similarly deduce that $\Sigma(\fQ^+)=\Q_2.$
We obtain that
$$\Q_2=\Sigma(\fQ^-)=\Sigma(\fQ^+).$$ For each $g\in G$ we define a bijection $\ga(g):\bS\to\bS$ by
\begin{align*}
\ga(g)(\Sigma(x)):=\Sigma(\al(g)(x)),~x\in\fC.
\end{align*}
Since the permutations defining elements of $G$ are cyclic, it follows that $\ga(g)$ is a homeomorphism of $\bS$, and so this formula furnishes an OPH action $\ga:G\act\bS$. Since $\fQ^-$ is stable under $\al:G\act\fC$ the same is true for $\Q_2$ under $\ga:G\act\bS$ as $\Q_2=\Sigma(\fQ^-)$. Hence, there is an induced action $\ga:G\act\Q_2$ which restricts to the standard action of Thompson's group $T$ on dyadic rationals. Moreover, if $\al$ is faithful, then so is $\ga$.
Indeed, suppose for any $g\in G$ there exists $x\in\fC$ such that $\al(g)(x)\not=x$. Since $\fC$ is Hausdorff and $\al(g)$ is continuous we may pick an open interval $I$ containing $x$ such that $\al(g)(I)\cap I=\varnothing$. Now, $J:=\Sigma(I)$ is an open set in $\bS$ containing $\Sigma(x)$ and satisfies $\ga(g)(J)\cap J=\varnothing$. Hence, $\ga$ is faithful. Moreover, $L$ is the stabiliser of $0\in\bS$ inside $G$.

\begin{theorem}\label{theo:rigid}
The actions $\ga:G\act\bS$ and $\ga:D(G)\act\bS$ are McCleary--Rubin rigid.
\end{theorem}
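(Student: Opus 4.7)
The plan is to verify the three defining properties of McCleary--Rubin rigidity (faithful OPH, $3$-o-transitivity on a dense subset, and an element of bounded support) for both actions, using the equivariant embedding of Thompson's groups $F,T,V$ inside $L,G,M$ that was highlighted just before Observation \ref{obs:faithful}. The whole argument reduces the statement to classical facts about the Thompson groups $F$ and $T$ acting on $\bS$.

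First I would handle $\gamma : G \act \bS$. That the action is OPH is built into the definition (only cyclic permutations appear in $G$, and the maps $A_\0, A_\1, B_\0, B_\1$ are order-preserving on their images), and faithfulness is precisely the paragraph immediately following the definition of $\gamma$, which reduces faithfulness of $\gamma$ to Theorem \ref{theo:faithful}. For $3$-o-transitivity I take $D := \Q_2$, which is dense in $\bS$. The copy of Thompson's group $T$ inside $G$ (obtained by taking $a$-coloured tree-pair diagrams with a cyclic permutation between the leaves) is sent by $\gamma$ to the classical piecewise-dyadic action of $T$ on $\bS$, since $\Sigma$ intertwines the usual prefix-replacement action of $V$ on $\fC$ with the standard action on $\bS$, and this preserves $\Q_2 = \Sigma(\fQ^-)$. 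It is a well-known and elementary fact that $T$ acts $3$-o-transitively on the dyadic rationals of the circle, so the same holds a fortiori for $G$. For bounded support, any standard element of $F \subseteq L \subseteq G$ that fixes neighbourhoods of both endpoints of $[0,1]$ (for example the classical Thompson element $x_1$) gives, via $\gamma$, a non-trivial element of $G$ fixing a non-empty open subset of $\bS$.

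For $\gamma : D(G) \act \bS$, faithfulness and OPH are automatically inherited from $G$. For the remaining two properties I use that Thompson's group $T$ is simple, hence perfect, so the embedded copy satisfies $T = D(T) \subseteq D(G)$; thus $D(G)$ already contains a subgroup acting $3$-o-transitively on $\Q_2$. For the bounded-support requirement, I invoke the classical description $D(F) = \{g \in F \colon g \text{ fixes a neighbourhood of } 0 \text{ and of } 1\}$; any such element sits in $D(F) \subseteq D(L) \subseteq D(G)$, and under $\gamma$ it fixes a non-empty open subset of $\bS$ (in fact a neighbourhood of $0 \in \bS$). Alternatively, since $T$ itself is simple, any non-trivial element of $T$ fixing an open arc (e.g.\ again $x_1 \in F \subseteq T$) already lies in $D(G)$.

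The only point that requires care is to confirm that the embedding of $F,T,V$ inside $L,G,M$ really intertwines the canonical action with the classical actions of the Thompson groups on $\fC$ and $\bS$; this is exactly the content of Observation \ref{obs:faithful} together with the definition \eqref{eqn:Cantor-to-circle} of $\Sigma$, and so no new work is needed. I do not expect any genuine obstacle here: once the classical Thompson theory is invoked through the equivariant embedding, both rigidity statements follow immediately, and the main delicacy is simply keeping track of which standard fact (3-transitivity of $T$ on $\Q_2$, simplicity of $T$, or the characterisation of $D(F)$) is used at each step.
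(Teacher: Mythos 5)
Your proposal takes essentially the same route as the paper: reduce both rigidity claims to the classical facts that Thompson's group $T$ is $3$-o-transitive on $\Q_2$ and has elements of bounded support, using the colour-$a$ equivariant embedding $T\hookrightarrow G$ and the perfectness of $T$ to descend to $D(G)$. One small inaccuracy: the generator $x_1\in F$ fixes a neighbourhood of $0$ but not of $1$ (its slope at $1$ is $2$), though it does fix the open arc $(0,1/2)$, which is all that is actually needed for the bounded-support condition.
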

\begin{proof}
That $\ga:G\act\bS$ is faithful OPH follows from theorem \ref{theo:faithful} and the above discussion. Hence, the restricted action $\ga:D(G)\act\bS$ is also faithful OPH. Colouring monochromatic tree-diagrams by the colour $a$ furnishes an embedding $T\into G$ that respects the actions on $\Q_2\subset\bS$. Since $T$ is perfect (as it is simple) we deduce an equivariant embedding $T\into D(G)$. It is known that $T\act\Q_2$ is $3$-o-transitive and has elements of bounded support, see \cite[Lem.~1.2.3]{Brin-Guzman98}. Hence, these properties are also satisfied by $D(G)\act\Q_2$ and $G\act\Q_2$. Since $\Q_2\subset\bS$ is dense $\ga:D(G)\act\bS$ and $\ga:G\act\bS$ are rigid.
\end{proof}

\subsubsection{Groups of germs} 
Given a faithful group action $\Ga\act X$ and $x\in X$ the \emph{germ of $\ga$ at $x$} is the set $[\ga]_x$ of all $\eta\in\Ga$ that act the same as $\ga$ on a neighbourhood of $x$. We set
\begin{align*}
[\Ga;X]_x:=\{[\ga]_x:\ga\in\Ga,\ga(x)=x\},
\end{align*}
which is a group under $[\ga]_x[\eta]_x:=[\ga\eta]_x$ called the \emph{group of germs of $\Ga\act X$ at $x\in X$}. We may write $[\Ga]_x$ instead of $[\Ga;X]_x$ if the action is clear from context. 

{\bf Basic properties of groups of germs.} The group of germs at a point only depends on the orbit of that point. Indeed, if $\ga\in\Ga$ satisfies $\ga(x)=y$, then
\begin{align*}
[\Ga]_x\to[\Ga]_y,~[\eta]_x\mapsto[\ga\eta\ga^{-1}]_y
\end{align*}
is an isomorphism. For any $x\in X$, a subgroup $\Delta\subset\Ga$ induces a subgroup $[\Delta]_x\subset[\Ga]_x$.

{\bf Group of germs for Thompson's group $T$.} Recall the canonical action of Thompson's group $T$ on the Cantor space $\fC$ is by local finite prefix replacement which preserves the cyclic order. Given a word $w\in\{\0,\1\}^*$ with $m\geqslant1$ letters, an element $g\in T$ fixes $\ov w:=w\cdot w\cdot w\cdots$ ($w$ concatenated with itself infinitely many times) if and only if there exist $i,j\geqslant1$ such that $g(w^j\cdot x)=w^i\cdot x$ for all $x\in\fC$. Consequently, the map
\begin{align*}
[T]_{\ov w}\to m\Z,~[g]_{\ov w}\mapsto m(i-j)
\end{align*}
is an isomorphism. Since the group of germs of $x$ only depends on the orbit of $x$ we deduce that $[T]_{x}\simeq\Z$ for all rational points $x$ (including dyadics). Since elements of $T$ can only change finite prefixes of elements in $\fC$ it follows that $g\in T$ can only fix an irrational point if it fixes a neighbourhood of that point. Thus, $[T]_x\simeq\{e\}$ if $x$ is irrational. We refer the reader to \cite[Sec.~3]{Brin96} for details, particularly the discussion after lemma 3.3.

{\bf Group of germs for the canonical action of an FS group.} Recall that $L,G$ are the $F$-type and $T$-type FS groups of our fixed FS category $\cF$, respectively. Recall the quotients $\ov c^\pm:L\to\Ga^\pm$ from section \ref{sec:quotients} and let $K^-,K^+\subset L$ consist of those elements fixing a neighbourhood of $\ov\0$ and $\ov\1$ under the canonical action $\al:L\act\fC$, respectively. 

\begin{lemma}\label{lem:germ}
Since the canonical action is faithful, we have $\ker(\ov c^\pm)=K^\pm$.    
\end{lemma}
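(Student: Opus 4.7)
The plan is to establish each inclusion by exploiting that the local action at the first (respectively last) leaf of a tree takes a particularly simple form which factors through $\Ga^\pm$. Concretely, following the definition of $\beta$ in subsection \ref{sec:local-action}, one has
\[
\beta(t,1)=\phi^-(c^-(t))\quad\textnormal{and}\quad\beta(t,|\Leaf(t)|)=\phi^+(c^+(t))
\]
for every free tree $t$, where $\phi^-$ (resp.~$\phi^+$) extends $a\mapsto A_\0$, $b\mapsto B_\0$ (resp.~$a\mapsto A_\1$, $b\mapsto B_\1$) to a monoid homomorphism from $\{a,b\}^*$ to the semigroup of open embeddings of $\fC$ under composition, because the paths from the root to the first (respectively last) leaf take only left (respectively right) turns. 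The identities $A_\0^{n-1}=B_\0$ and $A_\1^2=B_\1^n$ established in subsection \ref{sec:local-action} imply that $\phi^\pm$ descends to a homomorphism on $\Ga^\pm$.

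For the inclusion $\ker(\ov c^\pm)\subseteq K^\pm$: if $g=[t/s]\in\ker(\ov c^\pm)$ then $c^\pm(t)=c^\pm(s)$ in $\Ga^\pm$, so applying $\phi^\pm$ yields $\beta(t,\cdot)=\beta(s,\cdot)$ as maps $\fC\to\fC$. The canonical action then fixes $\beta(s,\cdot)(\fC)$ pointwise, and this image is a clopen neighbourhood of $\ov\0$ (resp.~$\ov\1$); hence $g\in K^\pm$. This direction does not actually require faithfulness.

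For the reverse inclusion $K^\pm\subseteq\ker(\ov c^\pm)$: take $g\in K^\pm$ and fix an open neighbourhood $U$ of $\ov\0$ (resp.~$\ov\1$) on which $\al(g)$ is the identity. By theorem \ref{theo:Ore-categories}, represent $g$ as $[t/s]$ with both $t$ and $s$ being $a$-trees. Grow $(t,s)$ by an $a$-vine attached to the relevant leaf, namely $\lambda_k(a)$ at the first leaf in the $-$ case or $\rho_k(a)$ at the last leaf in the $+$ case, with $k$ large enough that the first/last cone of the grown $s$ lies inside $U$. Since the grown pair still consists of $a$-trees, the words $c^\pm(t)=a^d$ and $c^\pm(s)=a^e$ are pure powers of $a$, so $\beta(t,\cdot)$ and $\beta(s,\cdot)$ amount respectively to prepending $\0^d$ and $\0^e$ in the $-$ case (resp.~$\1^d$ and $\1^e$ in the $+$ case). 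Because $g$ fixes this cone, the two maps must agree, which immediately forces $d=e$ and hence $\ov c^\pm(g)=a^{d-e}$ is trivial in $\Ga^\pm$. Here we use that $a$ has infinite order in $\Ga^\pm$, which follows from $\Ga^-\simeq\Z$ and from $\phi^+(a^m)=A_\1^m$ being non-trivial for every $m\neq0$.

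The main obstacle lies in the second inclusion: it is the reduction to an $a$-tree pair combined with the growing step that bring the relevant cone inside $U$, after which everything else is routine. Faithfulness of the canonical action enters implicitly here; without it, $K^\pm$ could contain non-trivial elements of $\ker(\al)$ whose values under the combinatorial invariant $\ov c^\pm$ need not be controlled, which would break the claimed equality.
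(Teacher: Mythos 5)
Your forward inclusion $\ker(\ov c^\pm)\subseteq K^\pm$ is correct and is essentially the paper's argument: the identities $A_\0^{n-1}=B_\0$ and $A_\1^2=B_\1^n$ transport equality of $c^\pm(t)$ and $c^\pm(s)$ in $\Ga^\pm$ to equality of the local actions at the first (resp.~last) leaf, so $[t/s]$ fixes a clopen neighbourhood of $\ov\0$ (resp.~$\ov\1$), and as you observe this direction needs no faithfulness.

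The reverse inclusion has a genuine gap. You invoke Theorem~\ref{theo:Ore-categories} to ``represent $g\in K^\pm$ as $[t/s]$ with both $t$ and $s$ being $a$-trees,'' but that theorem only says a \emph{single} forest can be grown into an $a$-forest; it does not produce one common growth $f$ making $tf$ and $sf$ simultaneously $a$-trees, and in general no such representative exists. Indeed an $a$-tree-pair representative would put $g$ in the Thompson-$F$ copy $G(a)\subset L$, so $\al(g)$ would be a finite prefix replacement. Take $g=[Y_a(I\ot Y_b)/Y_a(I\ot Y_a)]$: here $c^-(t)=c^-(s)=a$, so $g\in\ker(\ov c^-)\subseteq K^-$ by the forward direction, but $\beta(t,3)=A_\1 B_\1$ acts on $\1\1\cdot\fC$ by shifting the infinite partition $(\mu_i\cdot\fC)_{i}$ of $\fC\setminus\{\ov\1\}$: one computes $\al(g)(\1^{2k+1}\0\cdot y)=\1^{2k+1}\0^{n-1}\cdot y$ for every $k\geqslant1$, which is incompatible with any finite cone decomposition. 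So $g$ admits no $a$-tree-pair representative and your whole reverse argument rests on a reduction that fails. A related symptom is that, were the reduction valid, your argument would never actually invoke faithfulness, yet the lemma is explicitly conditional on it. The paper avoids this reduction: grow $(t/s)$ so that $[t/s]$ fixes the range of $\beta(s,l)$; pick a right common multiple $tp=sq$ so $[t/s]=[sq/sp]$; note the last trees $q_l,p_l$ of $q,p$ have the same number of leaves and that $[q_l/p_l]\in\ker(\al)$; use faithfulness to get $q_l=p_l$ in $\cF$; and then read $c^+$ off the right edges of $sq$ and $sp$ to conclude $\ov c^+([t/s])=e$. Faithfulness enters exactly at the collapse $[q_l/p_l]=e$, the step your shortcut skips.
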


\begin{proof}
We prove the assertion only in the ``$+$'' case as the ``$-$'' case follows from a similar argument. Let $[t/s]\in\ker(\ov c^+)$ and let $l$ be the number of leaves of $t$ and $s$. By definition of $\ov c^+$ we have that $c^+(t)=c^+(s)$ in the group $\Ga^+$. This implies there is a finite sequence of applications of the relations $a^2=b^n$, $xx^{-1}=e$, $x^{-1}x=e$, $x\in\{a,b\}$, taking the word $c^+(t)$ to the word $c^+(s)$. Since $A_\1^2=B_\1^n$, we deduce a finite sequence of relations taking the local action $\beta(t,l)$ to $\beta(s,l)$. It follows that $[t/s]$ fixes $\beta(s,l)(\fC)$, which contains a neighbourhood of $\ov\1$, so $[t/s]\in K^+$. This establishes the inclusion $\ker(\ov c^+)\subset K^+$, which we note holds \emph{regardless} of faithfulness of the canonical action.

Conversely, suppose $[t/s]\in K^+$ and let $l$ be the number of leaves of $t$ and $s$. By growing the representative $(t/s)$ we may assume that $[t/s]$ fixes the range of $\beta(s,l)$. Since $\cF$ has right-common-multiples, we pick $p,q\in\cF$ such that $tp=sq$. Now $[t/s]=[sq/sp]$ and we must have that the last trees of $p$ and $q$ have the same number of leaves as otherwise $[sq/sp]$ would not fix $\beta(s,l)(\fC)$. Hence, $[q_l/p_l]\in L$ is well-defined. Now we use the assumption that $\al:L\act\fC$ is faithful. Since $[sq/sp]$ fixes $\beta(s,l)(\fC)$ we have $[q_l/p_l]\in\ker(\al)$ so $[q_l/p_l]=e$. Now reading the colours off the right side of $sq$ and $sp$ yields the same word, so $[t/s]=[sq/sp]\in\ker(c^+)$, which gives $K^+\subset\ker(\ov c^+)$ and thus $\ker(\ov c^+)=K^+$. 
\end{proof}

Recall the right-dyadics $\fQ^+=\{w\cdot \ov\1: w\in\{\0,\1\}^*\}$ from Section \ref{sec:canonical-action} and the notation for the germ $[g]_x$ of a group element $g\in G$ at a point $x\in\fC$. Recall also $\fT=\fT_n$ from subsection \ref{sec:partition} is an infinite tree whose leaves $(\mu_j)_{j\geqslant1}$ give an infinite partition of $\fC\setminus\{\ov\1\}$.

\begin{lemma}\label{lem:a-germ}
For all $x\in\fC\setminus\fQ^+$ and $g\in G$ there exist $a$-trees $t(a),s(a)$, and a cyclic permutation $\pi$ of their leaves giving a group element $h:=[t(a)/\pi/s(a)]\in G$ satisfying $[g]_x=[h]_x$. Hence, at any non-right-dyadic point of $\fC$ any element of $G$ acts locally as an element of Thompson's group $T$.
\end{lemma}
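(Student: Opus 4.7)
My plan is to reduce the germ of $g$ at $x$ to a local prefix-replacement on a cone around $x$, and then realise this prefix-replacement by an element of Thompson's group $T$, identified with the subgroup of $a$-coloured diagrams inside $G$.

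The dynamical observation underpinning the approach is the following: \emph{for any tree $r$ in $\cF_n$ and leaf $i$, the map $\beta(r,i):\fC\to\fC$ is piecewise prefix-replacement, and the partition of $\fC$ into pieces on which it restricts to a single prefix-replacement has accumulation points only in $\fQ^+$.} Indeed, $A_\0,A_\1,B_\0$ are global prefix-additions while $B_\1$ restricts to the prefix-replacement $\mu_j\to\mu_{j+1}$ on each $\mu_j\cdot\fC$, and these cones accumulate only at $\ov\1\in\fQ^+$ as $j\to\infty$. Each of the four elementary maps both preserves $\fQ^+$ and has preimage of $\fQ^+$ contained in $\fQ^+$, so this property propagates through compositions by induction on the number of occurrences of $B_\1$ in $\beta(r,i)$.

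Now write $g=[t/\pi/s]$, let $i$ be the leaf of $s$ with $x\in\beta(s,i)(\fC)$, and set $y:=\beta(s,i)^{-1}(x)$. Since $\beta(s,i)$ maps $\fQ^+$ into $\fQ^+$, we have $y\in\fC\setminus\fQ^+$. Applying the key fact to both $\beta(s,i)$ and $\beta(t,\pi(i))$ yields cones around $y$ on which each is a single prefix-replacement; as two cones sharing a point are nested, their intersection is a cone $u\cdot\fC\ni y$. Because $y$ has infinitely many $\0$'s, we refine $u$ to a longer prefix $u^*$ of $y$ containing a $\0$. On the smaller cone $u^*\cdot\fC$, $\beta(s,i)$ is the prefix-replacement $u^*\to\alpha$ and $\beta(t,\pi(i))$ is $u^*\to\gamma$ for some words $\alpha,\gamma$. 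Consequently, for all $z\in\fC$,
\[g(\alpha\cdot z)=\beta(t,\pi(i))\bigl(\beta(s,i)^{-1}(\alpha\cdot z)\bigr)=\beta(t,\pi(i))(u^*\cdot z)=\gamma\cdot z,\]
so $g$ coincides with the prefix-replacement $\alpha\to\gamma$ on the cone $\alpha\cdot\fC$, which is a neighbourhood of $x$.

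It remains to realise this local prefix-replacement by an element of $T$. I would pick $a$-trees $s(a)$ having a leaf at address $\alpha$ and $t(a)$ having a leaf at address $\gamma$, and grow each of them by further $a$-forests so that they have the same number of leaves and so that the $a$-sub-trees sitting below $\alpha$ in $s(a)$ and below $\gamma$ in $t(a)$ are identical; then a suitable cyclic permutation $\pi'$ (whose existence uses only that cyclic shifts act transitively on leaf indices) maps the block of leaves of $s(a)$ sitting below $\alpha$ bijectively and address-preservingly onto the block of leaves of $t(a)$ sitting below $\gamma$. The element $h:=[t(a)/\pi'/s(a)]\in T\subset G$ then acts by the prefix-replacement $\alpha\to\gamma$ on $\alpha\cdot\fC$, so agrees with $g$ on a neighbourhood of $x$, giving $[g]_x=[h]_x$. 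The main technical obstacle in the plan is the key dynamical fact in the second paragraph, whose rigorous proof requires careful bookkeeping of how the nested partitions arising from successive $B_\1$'s interact under composition.
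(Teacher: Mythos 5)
Your proposal is correct and follows essentially the same strategy as the paper: show that $\al(g)$ coincides with a single prefix-replacement on a cone around $x$, then realise that prefix-replacement by an $a$-coloured tree-pair diagram. The two places where you diverge are worth flagging. First, your ``key dynamical fact'' is a global statement (the whole partition on which $\beta(r,i)$ is piecewise prefix-replacement accumulates only in $\fQ^+$), whereas the paper proves exactly the local version needed --- for a fixed $x\in\fC\setminus\fQ^+$ there is \emph{some} cone around $x$ on which $\beta(t,i)$ is a single prefix-replacement --- by a clean induction on the length of $\beta(t,i)$ as a word in $\{A_\0,A_\1,B_\0,B_\1\}$, using precisely the observation you make that $\fQ^+$ is preserved and co-preserved by the four elementary maps. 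The local statement avoids the bookkeeping you correctly identify as the technical obstacle, since you never have to reason about accumulation points of a refined partition, only about one chain of nested prefixes of $x$. Second, the paper first grows the representative $(t/\pi/s)$ so that $s$ is $a$-coloured (possible by Theorem~\ref{theo:Ore-categories}); then $\beta(s,i)$ is a bare prefix-addition and the claim only needs to be applied once, to $\beta(t,\pi(i))$, rather than to both local actions followed by an intersection of cones. Your construction of the Thompson element $h$ also works, though it is more elaborate than needed: once you have $a$-trees $s(a)$, $t(a)$ with $\alpha$, $\gamma$ as leaf addresses and equal leaf counts, the formula for the canonical action already gives $h(\alpha\cdot z)=\gamma\cdot z$, with no need to match subtrees below $\alpha$ and $\gamma$.
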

\begin{proof}
Fix $x\in\fC\setminus\fQ^+$. 

{\bf Claim: For any tree $t$ and any leaf $i$ there exist words $u,v\in\{\0,\1\}^*$ such that $x\in u\cdot\fC$ and 
$\beta(t,i)(u\cdot y)=v\cdot y$ for all $y\in\fC$.}

We induct on the length of the local action $\beta(t,i)$ as a word in $\{A_\0,A_\1,B_\0,B_\1\}$. The base case is clear for $A_\0$, $A_\1$, and $B_\0$ (take $u=\varnothing$), we just need to check for $B_\1$. Since $x\in\fC\setminus\fQ^+$ it follows that $x$ ``passes through'' a leaf of the infinite tree $\fT$, that is, $x=\mu_j\cdot x'$ for some $j\geqslant1$ and $x'\in\fC$. Taking $u=\mu_j$ and $v=\mu_{j+1}$ gives the claim for $B_\1$. Indeed, $x\in\mu_j\cdot\fC$ and $B_\1(\mu_j\cdot y)=\mu_{j+1}\cdot y$ for any $y\in\fC$. Suppose the claim is true for those local actions having length strictly less than $m>1$ and let $\beta(t,i)$ have length $m$. We decompose $\beta(t,i)=Z\circ W$, where $Z\in\{A_\0,A_\1,B_\0,B_\1\}$ and $W$ has length $m-1$. By the inductive hypothesis we may choose $u,v\in\{\0,\1\}^*$ such that $x\in u\cdot\fC$ and $W(u\cdot y)=v\cdot y$ for all $y\in\fC$. If $Z\in\{A_\0,A_\1,B_\0\}$ then we are done. By assumption $x=u\cdot x'$ for some $x'\in\fC\setminus\fQ^+$. Since $x'$ is not eventually $\ov\1$ we may pick a prefix $v'$ of $x'$ such that $u\cdot v'=\mu_l\cdot v''$ for some $l\geqslant1$ and $v''\in\{\0,\1\}^*$.
Now $x\in uv'\cdot\fC$ and by definition of $W$
\begin{align*}
\beta(t,i)(uv'\cdot y)=B_\1(W(u\cdot v'\cdot y))=B_\1(v\cdot v'\cdot y)=B_\1(\mu_l\cdot v''\cdot y)=\mu_{l+1}v''\cdot y
\end{align*}
for any $y\in\fC$. This completes the proof of the claim. 

We can now prove the lemma. Fix $x\in\fC\setminus\fQ^+$ and $g\in G$. By theorem \ref{theo:Ore-categories} any tree can be grown into an $a$-tree and so we may assume that $g$ is represented by a diagram $(t/\pi/s)$ where $s$ is $a$-coloured. We may write $x=u\cdot x'$ where $u$ is the address of the $i$th leaf of $s$ (so that $\beta(s,i)(y)=u\cdot y$ for all $y\in\fC$) and $x'\in\fC\setminus\fQ^+$. By the claim we pick $u',v'\in\{\0,\1\}^*$ such that $x'\in u'\cdot\fC$ and $\beta(t,\pi(i))(u'\cdot y)=v'\cdot y$ for all $y\in\fC$. Now
\begin{align*}
\al(g)(uu'\cdot y)=\beta(t,\pi(i))\circ\beta(s,i)^{-1}(u\cdot u'\cdot y)=\beta(t,\pi(i))(u'\cdot y)=v'\cdot y
\end{align*}
for all $y\in\fC$. We now construct a $a$-coloured tree-diagram $h$ sending $uu'\cdot\fC$ affinely to $v'\cdot\fC$. Let $t$ be a tree containing $v'$ as a leaf a tree and $s$ another tree containing $uu'$ as a leaf and having the same number of leaves as $t$. Let $\pi$ be the cyclic permutation taking the leaf $v'$ in $t$ to $uu'$ in $s$. Now $h:=[t(a)/\pi/s(a)]$ sends $uu'\cdot\fC$ affinely to $v'\cdot\fC$. Since $x\in uu'\cdot\fC$ we have $[g]_x=[h]_x$.
\end{proof}

We now calculate all groups of germs for the canonical action of $G$ as well as of $D(G)$.

\begin{proposition}\label{prop:canonical-germ} For the canonical action $\al:G\act\fC$ the following are true.
\begin{enumerate}
    \item For all $x^\pm\in\fQ^\pm$ we have $[G;\fC]_{x^\pm}\simeq\Ga^\pm$. 
    \item For all $x\in\fQ\setminus(\fQ^-\cup\fQ^+)$ we have $[G;\fC]_x\simeq\Z$.
    \item For all $x\in\fC\setminus\fQ$ we have $[G;\fC]_x\simeq\{e\}$.
    \item For all $x\in\fC$ we have $[D(G);\fC]_x\simeq[G;\fC]_x$.
\end{enumerate}
\end{proposition}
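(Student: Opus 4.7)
The plan is to handle the four parts in sequence, exploiting transitivity of the canonical action on orbits plus the algebraic machinery from lemmas \ref{lem:germ} and \ref{lem:a-germ}, and the simplicity of Thompson's $T$ inside $G$.

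For part (1), I would first note that $T\subset G$ acts transitively on each of $\fQ^\pm$, so $G$ does too, and therefore it suffices to compute $[G;\fC]_{\ov\1}$ and $[G;\fC]_{\ov\0}$. I would then identify the stabiliser of $\ov\1$ in $G$: if $g=[t/\pi/s]$ fixes $\ov\1$, then since $A_\1$ and $B_\1$ both fix $\ov\1$, the local action $\beta(s,l)$ at the last leaf fixes $\ov\1$, and similarly $\beta(t,\pi(l))$ must fix $\ov\1$, forcing $\pi(l)=l$; cyclicity of $\pi$ then forces $\pi=\id$, so $g\in L$. Hence $\Stab_G(\ov\1)=L$, so
\begin{align*}
[G;\fC]_{\ov\1}=[L;\fC]_{\ov\1}=L/K^+\simeq L/\ker(\ov c^+)\simeq\Ga^+,
\end{align*}
using lemma \ref{lem:germ}. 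The $\fQ^-$ case is symmetric with $\ov\0$, $K^-$, and $\Ga^-$.

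For parts (2) and (3), I would invoke lemma \ref{lem:a-germ}: for any $x\in\fC\setminus\fQ^+$, every germ in $[G;\fC]_x$ is realised by an element of Thompson's group $T$ embedded via $a$-colouring. Thus $[G;\fC]_x=[T;\fC]_x$ inside the germ group of $\Homeo(\fC)$ at $x$. The classical computation for $T$ recalled earlier then gives $\Z$ at rationals and trivial at irrationals. Part (2) excludes $\fQ^-\cup\fQ^+$ (still inside $\fC\setminus\fQ^+$); part (3) is just the irrational case.

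For part (4), the inclusion $[D(G);\fC]_x\subset[G;\fC]_x$ is automatic. For the reverse at $x\notin\fQ^+$, since $T$ is simple it is perfect, so $T\subset D(G)$, and the equality $[G;\fC]_x=[T;\fC]_x$ upgrades directly to $[D(G);\fC]_x=[G;\fC]_x$. The genuine work is at $x\in\fQ^+$: I must show every class in $\Ga^+$ is hit by $\ov c^+$ restricted to $D(G)\cap L$. Since $\Ga^+$ is generated by $a$ and $b$, and by the formulas in subsection \ref{sec:quotients} these are the images of $[\rho_2(a)/\lambda_2(a)]$ and $[\rho_2(b)/\lambda_2(b)]$ respectively, it suffices to place these two elements in $D(G)$. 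The first lies in the Thompson subgroup $F\subset T\subset D(G)$. For the second, theorem \ref{theo:abelianisation} says that membership in $D(G)$ is detected by $\chi(t)-\chi(s)\bmod n$; since $\chi(\rho_2(b))=\chi(\lambda_2(b))=2$, this difference is $0$, placing the element in $D(G)$. The $\fQ^-$ case is already covered by the $x\notin\fQ^+$ argument above (and can alternatively be verified directly by the analogous calculation via $\ov c^-$).

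The main obstacle is the verification at $\fQ^+$ in part (4), because $\Ga^+$ is non-abelian in general and we cannot simply read it off the abelianisation; the argument hinges on having explicit generating elements with tractable $b$-vertex counts so that theorem \ref{theo:abelianisation} reduces the non-trivial step to a one-line arithmetic check modulo $n$.
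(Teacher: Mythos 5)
Your proposal is correct, and parts (1)--(3) follow essentially the same route as the paper: for (1) you compute germs at $\ov\1$ (resp.\ $\ov\0$) as $L/K^+\simeq\Ga^+$ (resp.\ $L/K^-\simeq\Ga^-$) using lemma~\ref{lem:germ} and transitivity on $\fQ^\pm$; for (2)--(3) you reduce to Thompson's $T$ via lemma~\ref{lem:a-germ}. Your observation that $\Stab_G(\ov\1)=L$ (argued via the local actions forcing $\pi(l)=l$ and cyclicity of $\pi$) is a valid alternative to citing the dynamical characterisation of $L$ directly; both are fine.

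Where you genuinely diverge from the paper is in part (4). The paper proves $[D(G)]_x=[G]_x$ uniformly for all $x$ by a Higman--Epstein displacement argument: given $g$ fixing $x$, one builds $h\in G$ with bounded support agreeing with $g$ on a neighbourhood $U$ of $x$, finds $k\in G$ moving $\supp(h)\cup U$ off itself, and then $[h,k]\in D(G)$ realises the germ of $g$ at $x$. You instead split into cases. For $x\notin\fQ^+$ you exploit $[G]_x=[T]_x$ together with $T=D(T)\subset D(G)$, which is clean and immediate given parts (1)--(3). For $x\in\fQ^+$ you reduce the problem to surjectivity of $\ov c^+\colon D(G)\cap L\to\Ga^+$, then check it on the two generators $a=\ov c^+([\rho_2(a)/\lambda_2(a)])$ and $b=\ov c^+([\rho_2(b)/\lambda_2(b)])$: the first is an $a$-coloured diagram in $F\subset T\subset D(G)$, and the second has $\chi(\rho_2(b))-\chi(\lambda_2(b))=2-2=0$, so lies in $D(G)=\ker\ov\chi$ by theorem~\ref{theo:abelianisation}. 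Your reduction is valid: the germ-group inclusion $[D(G)]_{\ov\1}\hookrightarrow[G]_{\ov\1}\simeq L/K^+\simeq\Ga^+$ is onto precisely when $D(G)\cap L$ surjects onto $\Ga^+$ under $\ov c^+$, and hitting both generators suffices since $\ov c^+$ restricted to the subgroup $D(G)\cap L$ is still a homomorphism. The trade-off: the paper's displacement argument is uniform and works for any FS group with faithful canonical action, whereas your approach is shorter and more computational but leans on specific features of this family — the explicit presentation of $\Ga^+$, the tractable generators under $\ov c^+$, and the abelianisation criterion from theorem~\ref{theo:abelianisation}. Both are correct proofs.
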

\begin{proof}
(1) We show this only for $x\in\fQ^-$ as the other case follows similarly. Recall the $F$-type group $L$ is the subgroup of $G$ whose elements fix $\ov\0$. By the definition of groups of germs, lemma \ref{lem:germ}, and the quotient $c^-:L\to\Ga^-$, we deduce that
\begin{align*}
[G]_{\ov\0}=L/K^-=L/\ker(c^-)\simeq\Ga^-.
\end{align*}
Since any $x\in\fQ^-$ is in the same $G$-orbit as $\ov\0$ we have $[G]_x\simeq[G]_{\ov\0}\simeq\Ga^-$, as required.

(2) Let $x\in\fQ\setminus(\fQ^-\cup\fQ^+)$. The subgroup $G(a)\subset G$ of all $a$-coloured tree-diagrams induces an inclusion $[G(a)]_x\subset[G]_x$ and lemma \ref{lem:a-germ} implies $[G]_x\subset[G(a)]_x$, so $[G]_x=[G(a)]_x$. As isomorphism $T\simeq G(a)$ respects canonical actions and $[T]_x\simeq\Z$, we deduce $[G]_x\simeq\Z$.

(3) Let $x\in\fC\setminus\fQ$. As in item (2), lemma \ref{lem:a-germ} implies $[G]_x\simeq[T]_x$, and since $[T]_x\simeq\{e\}$ for irrational points, we deduce $[G]_x\simeq\{e\}$.

(4) For any $x\in\fC$ the subgroup $D(G)\subset G$ induces an inclusion $[D(G)]_x\subset[G]_x$. Given $g\in G$ one can construct $h\in G$ acting the same as $g$ on some neighbourhood $U$ of $x$ but that fixes some non-empty open set. Up to taking a proper subset of $U$, it follows that there exists a $k\in G$ such that $k(\supp(h)\cup U)\cap(\supp(h)\cup U)=\varnothing$, and so $[h,k]\in D(G)$ acts like $g$ on $U$. We refer the reader to Claim 1 of \cite[Thm.~3.5]{Brothier-Seelig24a} for further details of such a construction. This is to say that the germ of $g$ at $x$ can be represented by an element of $D(G)$, which completes the claim.
\end{proof}

{\bf Groups of germs for the rigid action.} We have computed groups of germs for the canonical action $G\act \fC.$ 
We will now deduce groups of germs for the rigid action $G\act\bS$.
Recall the continuous surjective map $\Sigma:\fC\onto\bS$ defined in subsection \ref{sec:rubin-FS}.

\begin{proposition}\label{prop:rigid-germ}
For the rigid action $\ga:G\act\bS$ the following are true.
\begin{enumerate}
    \item For all $x\in\Q_2$ we have $[G;\bS]_x\simeq\Ga^+\times\Ga^-$.
    \item For all $x\in\bS\setminus\Q_2$ we have $[G;\bS]_{x}\simeq[G;\fC]_{y}$, where $\Sigma^{-1}(x)=\{y\}$.
    \item For all $x\in\bS$ we have $[D(G);\bS]_x\simeq[G;\bS]_x$.
\end{enumerate}
\end{proposition}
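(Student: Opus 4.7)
The strategy hinges on the quotient map $\Sigma:\fC\onto\bS$ from subsection \ref{sec:rubin-FS}. It is a local homeomorphism away from $\Sigma^{-1}(\Q_2)$ and, at each $x\in\Q_2$, glues together exactly two points $y^-\in\fQ^+$ and $y^+\in\fQ^-$ of $\fC$ (with $y^-=\ov\1$, $y^+=\ov\0$ when $x=0$, and $y^-=w\cdot\0\ov\1$, $y^+=w\cdot\1\ov\0$ for some $w\in\{\0,\1\}^*$ otherwise). In either case $\Sigma$ carries left-neighbourhoods of $y^-$ in $\fC$ onto left-neighbourhoods of $x$ in $\bS$, and right-neighbourhoods of $y^+$ in $\fC$ onto right-neighbourhoods of $x$ in $\bS$; this is the essential geometric input.

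For (2), if $x\in\bS\setminus\Q_2$ and $\Sigma^{-1}(x)=\{y\}$, then $\Sigma$ restricts to a homeomorphism between neighbourhoods of $y$ and $x$, and conjugation by $\Sigma$ produces the isomorphism $[G;\bS]_x\simeq[G;\fC]_y$ directly. For (3), I plan to re-run the commutator argument of Proposition \ref{prop:canonical-germ}(4) inside $\bS$: given $g\in G$ with $\ga(g)(x)=x$, first find $h\in G$ agreeing with $g$ on a neighbourhood of $x$ and fixing some non-empty open set $U\subset\bS$, then pick $k\in G$ with $k(U\cup\supp(h))\cap(U\cup\supp(h))=\varnothing$, so that $[h,k]\in D(G)$ has the same germ at $x$ as $g$.

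The heart of the proof is (1). Any $g\in G$ with $\ga(g)(x)=x$ must fix \emph{each} of $y^-$ and $y^+$ under $\al$: since $\ga(g)$ is orientation-preserving, it preserves left- and right-neighbourhoods of $x$, so by the geometric correspondence above $\al(g)$ cannot swap $y^-$ with $y^+$. This lets me define a group homomorphism
\[
\Phi:[G;\bS]_x\to[G;\fC]_{y^-}\times[G;\fC]_{y^+},\quad [g]_x\mapsto([g]_{y^-},[g]_{y^+}).
\]
Injectivity is immediate, as a trivial germ at each $y^\pm$ in $\fC$ gives an element acting trivially on disjoint half-neighbourhoods whose $\Sigma$-images cover a neighbourhood of $x$ in $\bS$. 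Combining surjectivity of $\Phi$ with Proposition \ref{prop:canonical-germ}(1) then yields $[G;\bS]_x\simeq\Ga^+\times\Ga^-$.

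The main obstacle is surjectivity of $\Phi$: given $g_1,g_2\in G$ with $\al(g_i)$ fixing $y^-,y^+$ respectively, one needs $h\in G$ realising the prescribed pair of germs. The plan is to replace each $g_i$ by a representative in $L\subset G$ whose support is contained in a small cone $C_i\subset\fC$ around $y^-$ (resp.~$y^+$), arranged so that $C_1\cap C_2=\varnothing$, and then set $h:=g_1g_2$. Such a localised representative is produced by growing the tree-pair $(t/\pi/s)$ of $g_i$ deep enough at the leaf through which $y^\pm$ passes, noting that (because $g_i$ fixes $y^\pm$) this leaf on the $s$-side is matched by $\pi$ to the corresponding leaf on the $t$-side, and then replacing every other pair of leaves with trivial strands. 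This fragmentation step mirrors the classical argument for Thompson's group $T$ and exploits the same abundance of bounded-support elements in $G$ already used in the rigidity proof of Theorem \ref{theo:rigid}.
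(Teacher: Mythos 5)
Your overall strategy coincides with the paper's: parts (2) and (3) are handled the same way (conjugating by $\Sigma$ near a point with singleton fibre; re-running the commutator argument of Proposition \ref{prop:canonical-germ}(4)), and for part (1) you define the same homomorphism $\Phi:[G;\bS]_x\to[G;\fC]_{y^-}\times[G;\fC]_{y^+}$, with the same well-definedness and injectivity arguments. (The paper also first reduces to $x=0$ using transitivity of $G$ on $\Q_2$; your phrasing for general $x\in\Q_2$ is fine in spirit but the reduction is worth stating, since your fragmentation step produces elements of $L$, which fix $0$ specifically.)

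The one place where you diverge, and where your sketch has a genuine gap, is surjectivity of $\Phi$. You aim to produce, for a given germ $\gamma_1$ at $y^-=\ov\1$, an element of $L$ realizing $\gamma_1$ \emph{with support contained in a small cone around $\ov\1$}, by ``growing the tree-pair $(t/\pi/s)$ deep enough at the leaf through which $\ov\1$ passes and replacing every other pair of leaves with trivial strands.'' As stated this is not a well-posed operation on a tree-pair: $t$ and $s$ are different trees, and the subtrees of $t$ and $s$ hanging at corresponding leaves away from the last one need not have equal leaf counts, so ``replacing a pair of leaves with a trivial strand'' does not yield a valid tree-pair diagram. Also, growing only at the last leaf does nothing to control the behaviour of $g$ on the remaining cones. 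The conclusion you want (support localization) is in fact true, but it requires building a fresh pair rather than editing the given one --- e.g.\ set $g':=[Y_a(\lambda\ot\tilde t)/Y_a(\lambda\ot\tilde s)]$ where $\lambda$ is any fixed tree and $\tilde t,\tilde s$ are chosen with equal leaf counts and $a\cdot c^+(\tilde t)\cdot c^+(\tilde s)^{-1}\cdot a^{-1}=\gamma_1$ in $\Ga^+$; then $g'$ manifestly fixes $\0\cdot\fC$. The paper instead proves only the weaker (but sufficient) statement that one can kill the germ at $\ov\0$ while preserving the germ at $\ov\1$, via an explicit modification $g^+:=[tp''/sq'']$ of forests $p,q$ witnessing $tp=sq$, where $p''=p_1\ot p'\ot I^{\ot l-2}$ and $q''=q_1\ot q'\ot I^{\ot l-2}$; surjectivity follows by taking $g^+h^-$. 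Your approach, once the localization step is made precise, buys a bounded-support conclusion that the paper doesn't need, but as written the key construction is not justified.
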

\begin{proof}
(1) As the isomorphism type of the group of germs at $x$ only depends on its orbit, it suffices to show $[G;\bS]_0\simeq\Ga^+\times\Ga^-$. Recall $\Sigma^{-1}(0)=\{\ov\0,\ov\1\}$. Consider the map
\begin{align*}
\phi:[G;\bS]_0\to[G;\fC]_{\ov\1}\times[G;\fC]_{\ov\0},~[g]_0\mapsto([g]_{\ov\1},[g]_{\ov\0}).
\end{align*}
If $\ga(g)$ fixes a neighbourhood $U$ of $0$, then $\Sigma^{-1}(U)$ contains a neighbourhood of $\{\ov\0,\ov\1\}$ fixed by $\al(g)$. Hence, $\phi$ is well-defined, and is a group morphism as it factors through the diagonal morphism and canonical quotients. We claim $\phi$ is an isomorphism. For injectivity, suppose $\phi([g]_0)=([g]_{\ov\1},[g]_{\ov\0})=([e]_{\ov\1},[e]_{\ov\1})$, so that $\al(g)$ fixed a neighbourhood $C$ of $\{\ov\0,\ov\1\}$. It follows that $\Sigma(C)$ contains a neighbourhood of $0$ fixed by $\ga(g)$ and so $[g]_0=[e]_0$. Surjectivity is slightly more involved.

{\bf Claim: For any $g\in L$ there is $g^+\in L$ satisfying $[g^+]_{\ov\1}=[g]_{\ov\1}$ and $[g^+]_{\ov\0}=[e]_{\ov\0}$.}

Fix $g\in L$ and pick trees $t,s\in\cF$ with $l\geqslant3$ leaves such that $g=[t/s]$. Since $\cF$ has right-common-multiples, pick $p,q\in\cF$ be such that $tp=sq$. Let $l_p$ and $l_q$ be the number of leaves of the first tree of $p_1$ of $p$ and the first tree $q_1$ of $q$, respectively. Let $p'$ and $q'$ be any trees satisfying $|\Leaf(p')|+l_p=|\Leaf(q')|+l_q$ and define
\begin{align*}
p'':=p_1\ot p'\ot I^{\ot l-2}\quad\textnormal{and}\quad q'':=q_1\ot q'\ot I^{\ot l-2},    
\end{align*}
both of which have $l$ roots. Since $p''$ and $q''$ have the same number of leaves, it makes sense to form the fraction of trees $g^+:=[tp''/sq'']\in L$. By inspecting local actions at the first and last leaves, we deduce $[g^+]_{\ov\0}=[e]_{\ov\0}$ and $[g^+]_{\ov\1}=[g]_{\ov\1}$, as claimed.

By a similar argument, given $g\in L$ there is $g^-\in L$ such that $[g^-]_{\ov\1}=[e]_{\ov\1}$ and $[g^-]_{\ov\0}=[g]_{\ov\0}$. Fix $g,h\in L$ and let $g^+,h^-\in L$ be as defined above. We have
\begin{align*}
[g^+h^-]_{\ov\1}=[g^+]_{\ov\1}[h^-]_{\ov\1}=[g^+]_{\ov\1}=[g]_{\ov\1}
\end{align*}
and similarly $[g^+h^-]_{\ov\0}=[h]_{\ov\0}$. Hence, $[g^+h^-]_0$ maps to $([g]_{\ov\1},[h]_{\ov\0})$ under $\phi$, and so $\phi$ is an isomorphism, and we conclude using item (1) of proposition \ref{prop:canonical-germ}.

(2) If $x\in\bS\setminus\Q_2$, then $\Sigma^{-1}(x)=\{y\}$ for some $y\in\fC\setminus(\fQ^+\cup\fQ^-)$. Now 
\begin{align*}
\psi:[G;\bS]_{x}\to[G;\fC]_y,~[g]_{x}\mapsto[g]_y
\end{align*}
is a well-defined as group injective group morphism for similar reasons to above.
It is surjective since it factors through the identity and a quotient.

(3) The argument is similar to that of item (5) of proposition \ref{prop:canonical-germ}.
\end{proof}

\begin{remark}
Morally, item (1) of proposition \ref{prop:rigid-germ} says the local behavior of an element of $G$ at a ``breakpoint'' (endpoint of cone) is given by its local behaviour to the left and by its local behaviour to the right. Moreover, these left and right local behaviours may be different (like the slope to the left and the slope to the right of a breakpoint for Thompson's group $T$). 
This is in contrast to item (2) where the local behaviour at any other point is the same on the left and the right.
\end{remark}

\subsubsection{An example of the action of an element}\label{sec:graph}

Let $\bS=\R/\Z$ and identify the half open interval $[0,1)$ inside $\bS$ in the natural way. We will be considering closed intervals inside $[0,1)$ and we write them $[x,y]=\{z\in[0,1):x\leqslant z\leqslant y\}$. Consider the category $\cF_3$ as defined at the beginning of this section and let $g:=[Y_b/Y_a]\in G_3$. We have that $\ga(g):\bS\to\bS$ lifts to $\al(g):\fC\to\fC$. The action on the cone $\0\cdot\fC$ is straightforward
\begin{align*}
\al(g)(\0\cdot x)=B_\0A_\0^{-1}(\0\cdot x)
=B_\0(x)=A_\0^2(x)
=\0\0\cdot x.
\end{align*}
Hence, $\ga(g)$ maps $[\Sigma(\0\cdot\ov\0),\Sigma(\0\cdot\ov\1)]=[0,\frac{1}{2}]$ to $[\Sigma(\0\0\cdot\ov\0),\Sigma(\0\0\cdot\ov\1)]=[0,\frac{1}{4}]$ in the unique affine way: $z\mapsto z/2$.
Things are not so straightforward on $\1\cdot\fC$. Indeed
\begin{align*}
\al(g)(\1\cdot x)&=B_\1A_\1^{-1}(\1\cdot x)=B_\1(x)
\end{align*}
and we do not know how $B_\1$ acts on $x$ without further specification on $x$.  This was exactly the reason for introducing the infinite partition of $\fC$. In this case, the partition of $\fC$ is given by the leaves of the infinite tree appearing in diagram \ref{fig:infinite-tree}. In this case, the first three leaves of $\fT_3$ are given by $\mu_1=\0\0$, $\mu_2=\0\1$, and $\mu_3=\1\0$. Now, for any $x\in\fC$
\begin{align*}
\al(g)(\1\cdot\0\0\cdot x)&=B_\1(\0\0\cdot x)
=\0\1\cdot x,\\
\al(g)(\1\cdot\0\1\cdot x)&=B_\1(\0\0\cdot x)=\1\0\cdot x,~\textnormal{and}\\
\al(g)(\1\cdot\1\0\cdot x)&=B_\1(\1\0\cdot x)=\1\1\0\0\cdot x.
\end{align*}
Similar to before, this means $\al(g)$ maps $[\frac{1}{2},\frac{5}{8}]$ to $[\frac{1}{4},\frac{1}{2}]$, $[\frac{5}{8},\frac{3}{4}]$ to $[\frac{1}{2},\frac{3}{4}]$, and $[\frac{3}{4},\frac{7}{8}]$ to $[\frac{3}{4},\frac{13}{16}]$, all in the unique affine ways. Now $$\al(g)(\1\cdot\1\1\cdot x)=B_\1A_\1^2(x)=B_\1^{n+1}(x)=A_\1^2B_\1(x)=\1\1\cdot B_\1(x),$$
so by continuing to refine $x$ according to the leaves of $\tau_3$ we deduce that $\ga(g)$ looks like: 
\begin{figure}[H]
\centering
\InfinitePL
\label{fig:IF}
\end{figure}

There are some interesting features in this graph. Firstly, we have infinitely many ``fake breakpoints'' (at $\frac{5}{8},\frac{29}{32},\dots$) because the slopes either side are equal. 
Most importantly, there are infinitely many genuine breakpoints (at $\frac{1}{2},\frac{3}{4},\dots$) that are accumulating at $1$. One can draw a similar graph for $[Y_b/Y_a]$ but thought of as an element of Cleary's irrational slope Thompson groups (usually denoted $F_\tau,T_\tau,V_\tau$). This also yields a graph having infinitely many genuine breakpoints. However, the action $\ga:T_\tau\act\bS$ we deduce for Cleary's group is actually conjugate to a \emph{finite} piecewise affine action (because of McCleary--Rubin rigidity). It was partially this which motivated us to investigate the question of the existence of piecewise affine actions for FS groups: is this infinite piecewise affine action an artefact of our formalism or is it something inherant to certain FS groups? In subsection \ref{sec:PP-actions} we will show that not only is $\ga:G\act\bS$ not conjugate to a finite piecewise affine action, but that $G$ actually admits no such actions at all. Finally, there is fractal behaviour happening at the point $(1,1)$. The pattern of the graph given by the mapping $[\frac{1}{2},\frac{7}{8}]\to [\frac{1}{4},\frac{13}{16}]$ is repeated in $[\frac{7}{8},\frac{15}{16}]\to[\frac{13}{16},\frac{15}{16}]$ after being scaled by $\frac{1}{4}$, and this continues ad infinitum. 

\subsection{Non-existence of piecewise projective actions}\label{sec:PP-actions} In this section we gather the results we need to obstruct our groups having piecewise projective actions. In the next section we will then prove the \ref{theo:main}.
We keep the setup of the previous section, that is, $n\geqslant3$ and we have FS groups $L_n\subset G_n$ as defined at the start of section \ref{sec:examples}, with McCleary--Rubin rigid action $\ga_n:G_n\act\bS$. We drop $n$ if it is clear from context.

\subsubsection{Free subgroups of FS groups}\label{sec:free-subgroup}

We are first going to show that a certain subgroup of $G$ contains a non-abelian free subgroup. It is perhaps not surprising that $G$ contains non-abelian free subgroups as it is a group of homeomorphisms on the circle, see for instance \cite[Prop.~4.5]{Ghys01}, however the type of embedding we produce will obstruct the existence of piecewise projective actions. Let $K\subset L$ be the set of all elements fixing a neighbourhood of $0\in\bS$. Hence, $K=K^-\cap K^+$, where $K^\pm$ is the kernel of $\ov c^\pm:L\onto\Ga^\pm$. 

\begin{proposition}\label{prop:D(K)-simple}
If the canonical action of $G$ is faithful, then $D(K)$ is simple.
\end{proposition}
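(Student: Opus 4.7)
The plan is to adapt the proof strategy of Theorem~\ref{theo:simple} (the classical Higman--Epstein approach) to the subgroup $K \subset L$. Under the hypothesis that $\al: G \act \fC$ is faithful, its restriction to $K$ is also faithful. Every element of $K$ fixes pointwise some clopen neighborhood of $\ov\0$ and some clopen neighborhood of $\ov\1$, so effectively $K$ acts faithfully on the locally compact totally disconnected space $\fC \setminus \{\ov\0, \ov\1\}$.

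First, I would establish that $K$ contains many ``rigid stabilisers''. Via the equivariant embedding of Thompson's group $F$ into $L$ obtained by colouring interior vertices with $a$ (see observation~\ref{obs:faithful}), for any clopen cone $V \subset \fC$ disjoint from $\{\ov\0, \ov\1\}$ the subgroup $K_V := \{g \in K : \supp(\al(g)) \subset V\}$ contains an isomorphic copy of Thompson's group $F$ acting on $V$ in the standard piecewise affine fashion. In particular $K_V$ is nontrivial, it acts transitively on dyadic rationals in $V$, and $D(K_V)$ is simple (being essentially the derived subgroup of compactly supported Thompson's $F$).

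Next, I would deploy the classical commutator argument. Let $N \trianglelefteq D(K)$ be nontrivial and pick a nontrivial $g \in N$. By faithfulness, pick $x \in \fC \setminus \{\ov\0, \ov\1\}$ with $g(x) \neq x$ and a clopen cone $V \ni x$ such that $g(V) \cap V = \emptyset$ and $V$ is disjoint from some fixed clopen neighborhoods of $\ov\0$ and $\ov\1$. For any $h \in K_V$, the commutator $[g,h] = (g h g^{-1})\, h^{-1}$ lies in $N$ and factors as a product of two elements with disjoint supports (in $g(V)$ and $V$ respectively). Fixing $g$ and varying $h$, together with conjugation by elements of $K_V$, one extracts a copy of $D(K_V)$ inside $N$, showing that $D(K_V) \subset N$ for every admissible $V$.

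Finally, a fragmentation argument concludes. Any element of $D(K)$ can be written as a product of elements each supported in a small cone of the form $V$ above, by using the diagrammatic calculus: grow the tree-pair $[t/s]$ representing an element by a common $a$-forest to distribute its action across a prescribed partition of $\fC \setminus \{\ov\0, \ov\1\}$ into small cones. Combined with the preceding step this yields $D(K) \subset N$, hence $N = D(K)$. The main obstacle is this last fragmentation step: one must show that refining a tree-pair diagram preserves membership in the derived subgroup, and that the rigid stabilisers $K_V$ suffice to generate $D(K)$ as $V$ ranges over small cones away from the endpoints. This is where the rich diagrammatic calculus of $\cF$ becomes essential, producing the commutator identities needed to keep everything inside $D(K)$.
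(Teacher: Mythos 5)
You correctly identify the Higman--Epstein framework and the double-commutator trick: for a nontrivial $N\trianglelefteq D(K)$ with a nontrivial $g\in N$, choose a cone $V$ disjoint from $\{\ov\0,\ov\1\}$ with $\al(g)(V)\cap V=\varnothing$, and then $[[g,a],b]=[a^{-1},b]\in N$ for all $a,b\in D(K)$ supported in $V$. However, the concluding fragmentation step --- which you flag yourself as the main obstacle --- is where the argument fails. First, even if some $h\in D(K)$ could be written $h=h_1\cdots h_m$ with each $h_i$ supported in a small cone $V_i$, nothing forces $h_i\in D(K)\cap K_{V_i}$ (let alone puts $h_i$ among the commutators you have placed in $N$), so you cannot conclude $h\in N$. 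Second, fragmentation of FS diagrams is genuinely delicate: because of the skein relations, the support of $[t/s]$ is not read off by comparing $t$ and $s$ cone by cone, and ``growing by a common $a$-forest'' does not obviously produce a product decomposition into disjointly-supported pieces. The aside that $D(K_V)$ is simple ``being essentially compactly supported Thompson's $F$'' is also incorrect (and fortunately unnecessary): $K_V$ contains all FS diagrams of $K$ supported in $V$, not merely the $a$-coloured copy of $F$, so its simplicity is not easier than what you are trying to prove.

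The route the paper takes (through the cited claims of Theorem~3.5 of the companion preprint, which are also invoked in the proofs of propositions~\ref{prop:canonical-germ} and~\ref{prop:global-fixed-point}) avoids fragmentation entirely by combining \emph{compression} with \emph{perfection}. One establishes: (i) $D(K)$ is perfect; and (ii) for any clopen $D\subset\fC$ disjoint from $\{\ov\0,\ov\1\}$ and any nonempty open $U\subset\fC\setminus\{\ov\0,\ov\1\}$ there is $k\in D(K)$ with $\al(k)(D)\subset U$. Given arbitrary $c,d\in D(K)$, compress their supports by such a $k\in D(K)$ into the cone $V$ above, apply the double-commutator identity to obtain $k[c,d]k^{-1}\in N$, and then undo the conjugation (legitimate because $k\in D(K)$ normalises $N$) to get $[c,d]\in N$. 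Thus $D(D(K))\subset N$, and perfection of $D(K)$ gives $N=D(K)$. This sidesteps both of the gaps above: no fragmentation is needed, and one never has to place arbitrary small-support elements into $N$, only commutators.
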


This follows from the classical Higman--Epstein argument applied to a group admitting a sufficiently transitive action, for details see \cite[Thm.~3.5]{Brothier-Seelig24a}.

{\bf A non-abelian free quotient of $D(L)$.} Recall the quotient $\ov c^+:L\onto\Ga^+$ defined in subsection \ref{sec:quotients}. For the FS category in question we have $\Ga^+=\Gr\la a,b|a^2=b^n\ra$ where $n\geqslant3$. Modding out by the relation $a^2=e$ we deduce the quotient $\Ga^+\onto\Z_2*\Z_n$. Let $\pi:L\onto\Z_2*\Z_n$ be the composition of this quotient with $\ov c^+$. Now, it is standard that this induces a surjective group morphism between the derived subgroups 
\begin{align*}
\ov\pi:D(L)\onto D(\Z_2*\Z_n).  
\end{align*}
By \cite[Chap.~1, Sec.~1.3, Prop.~4]{Serre80} it follows that $D(\Z_2*\Z_n)$ is a free group of rank $n-1\geqslant2$ having basis given by the commutators $\{[a,b^i]:0<i<n\}$.

{\bf A non-abelian free subgroup of $D(L)$.} Using the universal property of free groups
\begin{align*}
\sigma:\{[a,b^i]:0<i<n\}\to D(L),~
[a,b^i]\mapsto[g_{1,a},g_{i,b}]
\end{align*}
extends uniquely to a group morphism
\begin{align*}
\ov\sigma:D(\Z_2*\Z_n)\to D(L).
\end{align*}
Since $\ov c\circ\sigma=\id$ by definition, we deduce that $\ov c\circ\ov\sigma=\id$, and so $\ov\sigma$ is injective. As such, $D(L)$ contains a free subgroup of rank $n-1\geqslant2$. Similarly, one can show $\Ga^+$ contains a non-abelian free subgroup. By proposition \ref{prop:rigid-germ} some groups of germs of the rigid action are non-amenable, which is something not shared by many other Thompson-like groups, which often have solvable groups of germs, see \cite{Brin96,Monod13}.

{\bf Embedding $L$ into $K$.} For any tree $t$ set $\iota(t):=Y_a(Y_a\ot I)(I\ot t\ot I)$. Hence, the local actions of $\iota(t)$ at its first and last leaves are $A_\0A_\0$ and $A_\1$, respectively, \emph{regardless of $t$}. Hence, if $t$ and $s$ are trees with the number of leaves, then so are $\iota(t)$ and $\iota(t)$ and  $[\iota(t)/\iota(s)]$ fix the cones $\0\0\cdot\fC$ and $\1\cdot\fC$ under the canonical action. Thus
\begin{align*}
\ov\iota:L\to K,~[t/s]\mapsto[\iota(t)/\iota(s)]
\end{align*}
is well-defined. 
This map is the restriction of a conjugation map in the fraction groupoid containing $L$ and so it automatically an injective group morphism. 

Heuristically, $\ov\iota:L\into K$ takes an element of $g\in L$ thought of as acting on the unit interval $[0,1]$, conjugates it by $\psi:x\mapsto\frac{1}{4}x+\frac{1}{4}$, and extends it into $\ov\iota(g)$ by fixing the complement of $\psi([0,1])=[\frac{1}{4},\frac{1}{2}]$. For example in $\cF_3$ the element $\ov\iota([Y_b/Y_a])$ has graph:
\begin{figure}[H]
\centering
\BoundedSupport
\label{fig:IF}
\end{figure}
which is a scaled and translated version of graph in subsection \ref{sec:graph}.

{\bf A non-abelian free subgroup of $D(K)$.} The embedding $\ov\iota:L\to K$ we have just constructed restricts to an embedding $\ov\iota:D(L)\to D(K)$. Hence 
\begin{align}\label{eqn:free-embedding}
\ov\iota\circ\ov\sigma:D(\Z_2*\Z_n)\to D(K)
\end{align}
is an embedding of the free group of rank $n-1\geqslant2$ into $D(K)$. 

\subsubsection{Bounded cohomology and actions on the circle}

We now seek to understand particular features of \emph{all} actions by orientation-preserving homeomorphisms (OPH actions) $D(K)\act\bS$. The tool suited for our needs is \emph{bounded cohomology}, for a nice overview see \cite{Ghys01} and for more details see for instance \cite{Frigerio17}. A classic result of Ghys states that the second integral bounded cohomology group $H^2_b(\Gamma;\Z)$ classifies OPH actions $\Gamma\act\bS$ up to \emph{semi-conjugacy}, which is a certain weakening of conjugacy, see \cite[Prop.~5.4]{Ghys87} or \cite[Thm.~6.6]{Ghys01}. In particular:

\begin{theorem}[Ghys `87]\label{theo:Ghys} If $H^2_b(\Ga;\Z)=0$, then any OPH action $\eta:\Ga\act\bS$ is semi-conjugate to the trivial action, hence $\eta:\Ga\act\bS$ has a global fixed point.
\end{theorem}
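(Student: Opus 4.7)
This is a classical result of Ghys, so my plan is to follow the standard route via the \emph{bounded Euler class}. The strategy is to build an explicit bounded $2$-cocycle that obstructs lifting the action to the universal cover, observe that vanishing of $H^2_b$ kills this obstruction in a controlled way, and then use the control (boundedness) to extract a fixed point.

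First I would recall the central extension
\begin{equation*}
0\to\Z\to\widetilde{\Homeo}^+(\bS)\to\Homeo^+(\bS)\to 1,
\end{equation*}
where $\widetilde{\Homeo}^+(\bS)$ is the group of orientation-preserving homeomorphisms of $\R$ commuting with the translation $T:x\mapsto x+1$. Pick the set-theoretic section $s:\Homeo^+(\bS)\to\widetilde{\Homeo}^+(\bS)$ sending $h$ to the unique lift $\tilde h$ with $\tilde h(0)\in[0,1)$. For any OPH action $\eta:\Ga\to\Homeo^+(\bS)$, the expression
\begin{equation*}
c(g,h):=s(\eta(gh))^{-1}s(\eta(g))s(\eta(h))\in\Z
\end{equation*}
takes values in $\{0,1\}$ and defines the bounded Euler cocycle representing $eu_b(\eta)\in H^2_b(\Ga;\Z)$.

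Next, using the hypothesis $H^2_b(\Ga;\Z)=0$, conclude $eu_b(\eta)=0$, so there exists a \emph{bounded} function $f:\Ga\to\Z$ with $c=\delta f$, i.e.\ $c(g,h)=f(gh)-f(g)-f(h)$. Define
\begin{equation*}
\tilde\eta(g):=s(\eta(g))\cdot T^{-f(g)}\in\widetilde{\Homeo}^+(\bS).
\end{equation*}
A direct computation using the cocycle identity shows $\tilde\eta$ is a genuine group homomorphism, hence an OPH action on $\R$ which projects down to $\eta$. This is the ``honest lift'' step.

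Finally, I would exploit the boundedness of $f$ to produce a global fixed point. Consider the orbit $A:=\tilde\eta(\Ga)\cdot 0\subset\R$. By construction $\tilde\eta(g)(0)=s(\eta(g))(0)-f(g)\in [0,1)-f(g)$, so $A$ is bounded in $\R$ by $\|f\|_\infty+1$. Since each $\tilde\eta(g)$ is an orientation-preserving (hence monotone) homeomorphism of $\R$ and $\tilde\eta(g)(A)=A$, we have $\tilde\eta(g)(\sup A)=\sup A$ for every $g\in\Ga$. Projecting $\sup A$ to $\bS$ yields a $\eta(\Ga)$-fixed point; moreover, the monotone map $\bS\to\bS$ collapsing each $\tilde\eta(\Ga)$-orbit closure furnishes the semi-conjugacy to the trivial action.

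The main obstacle, and the conceptual heart of the theorem, is precisely what separates bounded from ordinary cohomology. If one only had $eu(\eta)=0$ in $H^2(\Ga;\Z)$, the same construction would give a lift $\tilde\eta$, but the trivializing function $f$ need not be bounded, so $A$ could be unbounded in $\R$ and admit no supremum. The boundedness of $f$—inherited from the boundedness of $c$ together with vanishing in the \emph{bounded} cohomology group—is exactly what converts ``lift exists'' into ``orbit in $\R$ is bounded,'' and thence into a fixed point on the circle. Verifying all the compatibilities (that $\tilde\eta$ is a homomorphism, that $\sup A$ is fixed, that the induced quotient map realises a semi-conjugacy) are routine once this boundedness mechanism is in place.
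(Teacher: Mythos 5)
Your proof is correct in substance and follows the standard argument via the bounded Euler class, which is exactly the content of the references the paper cites (Ghys's 1987 paper and his survey). Note that the paper itself does not prove this theorem; it merely states it with citations, so there is no ``paper proof'' to compare against beyond those references.

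One small inconsistency worth fixing: your trivialization convention $c(g,h)=f(gh)-f(g)-f(h)$ is compatible with defining the lift as $\tilde\eta(g):=s(\eta(g))\cdot T^{+f(g)}$, not $T^{-f(g)}$. With your chosen sign, the verification reads $\tilde\eta(g)\tilde\eta(h)=s(\eta(g))s(\eta(h))T^{f(g)+f(h)}=s(\eta(gh))T^{c(g,h)+f(g)+f(h)}=s(\eta(gh))T^{f(gh)}=\tilde\eta(gh)$, and then $\tilde\eta(g)(0)=s(\eta(g))(0)+f(g)\in[f(g),f(g)+1)$, which is bounded since $f$ is. The rest of the fixed-point extraction via $\sup A$ and monotonicity is correct. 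The sketch of the semi-conjugacy to the trivial action (collapsing gaps in the orbit closure in $\R$ and descending to $\bS$) is standard and adequate at this level of detail. Conceptually you have correctly identified the crux: boundedness of the primitive $f$ is what turns ``the cohomological obstruction to lifting vanishes'' into ``the lifted orbit is bounded,'' and this is precisely what ordinary cohomology fails to provide.
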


Recently, Fournier-Facio and Lodha have developed a powerful algebraic criterion for a group $\Ga$ to have trivial second \emph{real} bounded cohomology group $H^2_b(\Ga;\R)=0$, see \cite[Thm.~1.2]{Fournier-Facio-Lodha23}. The condition on $\Ga$ is called having \emph{commuting conjugates}: for any finitely generated subgroup $\Delta\subset\Ga$ there exists $\ga\in\Ga$ such that every element of $\Delta$ commutes with every element of the conjugate $\ga\Delta\ga^{-1}$. 
When $\Ga$ is also perfect (i.e.,~has no proper abelian quotients), the authors deduce a stronger result concerning integral (rather than real) bounded cohomology:

\begin{theorem}[Fournier-Facio--Lodha `23]\label{theo:Fournier-Facio--Lodha}
If $\Ga$ is a perfect group with commuting conjugates, then $H_b^2(\Ga;\Z)=0$.
\end{theorem}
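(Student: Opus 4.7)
The plan is to deduce the integral statement from the real version, namely the result of \cite[Thm.~1.2]{Fournier-Facio-Lodha23} ensuring $H^2_b(\Ga;\R)=0$ whenever $\Ga$ has commuting conjugates. The main tool is the long exact sequence in bounded cohomology associated to the short exact sequence of trivial $\Ga$-modules
\begin{equation*}
0 \longrightarrow \Z \longrightarrow \R \longrightarrow \R/\Z \longrightarrow 0.
\end{equation*}
This sequence of normed abelian groups admits a bounded set-theoretic section (for instance by sending each class in $\R/\Z$ to its representative in $[0,1)$), which is enough for the connecting homomorphism to be well-defined on bounded cochains and to produce a long exact sequence whose relevant segment is
\begin{equation*}
H^1_b(\Ga;\R/\Z) \longrightarrow H^2_b(\Ga;\Z) \longrightarrow H^2_b(\Ga;\R).
\end{equation*}

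First, I would use the commuting conjugates hypothesis together with the cited real version of the theorem to kill the rightmost term, giving $H^2_b(\Ga;\R)=0$. By exactness, it then suffices to show that the leftmost term vanishes. For this I would identify bounded $1$-cocycles with trivial coefficients: a $1$-cocycle for a trivial action is nothing but a homomorphism, and when the target $\R/\Z$ is compact the boundedness condition is automatic; moreover the bounded $1$-coboundaries are zero. Hence
\begin{equation*}
H^1_b(\Ga;\R/\Z) \cong \Hom(\Ga,\R/\Z) \cong \Hom(\Ga^{\ab},\R/\Z).
\end{equation*}
The perfection hypothesis, i.e.~$\Ga^{\ab}=0$, now forces this group to vanish, and chasing through the exact sequence yields the desired equality $H^2_b(\Ga;\Z)=0$.

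The only step that requires genuine care is the first one: bounded cohomology is not always well-behaved with respect to short exact sequences of coefficients, so one must check that our particular coefficient sequence does give rise to a long exact sequence. This holds because $\R\to\R/\Z$ admits a bounded set-theoretic section, which lets the short exact sequence be fed through the standard bar resolution for bounded cohomology. Once this technical point is verified, all remaining steps reduce to the two vanishing facts above and are essentially formal.
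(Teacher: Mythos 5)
Your proposal is correct and matches the standard argument used by Fournier-Facio and Lodha in \cite[Cor.~4.20]{Fournier-Facio-Lodha23}, which the paper cites rather than reproducing: one feeds the coefficient sequence $0\to\Z\to\R\to\R/\Z\to0$ (using the bounded section $[0,1)$) into the bounded bar resolution, obtains the long exact sequence, kills $H^2_b(\Ga;\R)$ by the commuting-conjugates theorem, and kills $H^1_b(\Ga;\R/\Z)\cong\Hom(\Ga^{\ab},\R/\Z)$ by perfection.
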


See the proof of \cite[Cor.~4.20]{Fournier-Facio-Lodha23} for details. 

\begin{proposition}\label{prop:global-fixed-point}
The group $D(K)$ is perfect and has commuting conjugates. Hence $H^2_b(D(K);\Z)=0$ and any OPH action $D(K)\act\bS$ has a global fixed point.
\end{proposition}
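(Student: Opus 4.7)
The plan is to first establish the two algebraic conditions (perfectness and commuting conjugates), and then to chain the cited bounded-cohomology machinery.

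First I would prove perfectness. Theorem~\ref{theo:faithful} gives faithfulness of the canonical action $\al:G\act\fC$, so proposition~\ref{prop:D(K)-simple} applies and $D(K)$ is simple. The embedding (\ref{eqn:free-embedding}) exhibits a copy of the free group $D(\Z_2\ast\Z_n)$ of rank $n-1\geqslant 2$ inside $D(K)$, so $D(K)$ is in particular non-abelian. Its commutator subgroup $D(D(K))$ is then a non-trivial normal subgroup of the simple group $D(K)$, forcing $D(D(K))=D(K)$; equivalently, $D(K)$ is perfect.

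For commuting conjugates, let $\Delta=\la g_1,\dots,g_k\ra\subset D(K)$ be finitely generated. Each $g_i$ fixes an open arc containing $0\in\bS$, so the intersection $V$ of these arcs is an open arc containing $0$ fixed pointwise by every element of $\Delta$; equivalently, $\supp(\Delta)\subset C:=\bS\setminus V$, a closed arc which I view as a compact subinterval of $\bS\setminus\{0\}\cong(0,1)$. The aim is to find a conjugator $\gamma\in D(K)$ with $\gamma(C)\cap C=\varnothing$, whence $\supp(\Delta)$ and $\supp(\gamma\Delta\gamma^{-1})$ are disjoint and the two subgroups commute elementwise. The main obstacle is keeping $\gamma$ inside $D(K)$ rather than merely in $K$; producing such a $\gamma$ in $K$ is straightforward, but we need the commutator subgroup. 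I would bypass this using Thompson's group $F$ embedded in $L$ via $a$-coloring: its commutator subgroup $F_c=D(F)$ consists of elements fixing neighborhoods of both endpoints of $[0,1]$, so $F_c\subset K$, and is classically known to be simple. Hence $F_c=[F_c,F_c]\subset[K,K]=D(K)$. Since $F_c$ contains ample piecewise linear elements compressing any compact subinterval of $(0,1)$ into a chosen dyadic subinterval disjoint from it, this supplies the desired $\gamma\in F_c\subset D(K)$.

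With perfectness and commuting conjugates established, theorem~\ref{theo:Fournier-Facio--Lodha} of Fournier-Facio--Lodha yields $H^2_b(D(K);\Z)=0$, and theorem~\ref{theo:Ghys} of Ghys then implies every OPH action $D(K)\act\bS$ is semi-conjugate to the trivial action and in particular admits a global fixed point.
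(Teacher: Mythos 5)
Your proof is correct, and it diverges from the paper's in one substantive way: the construction of the conjugator in the commuting-conjugates argument. The paper works directly in the Cantor set $\fC$ and cites Claim 2 of \cite[Thm.~3.5]{Brothier-Seelig24a} as a black box to produce $\ga\in D(K)$ carrying a clopen set $D:=\fC\setminus C$ into a clopen $E\subset C$ avoiding $\{\ov\0,\ov\1\}$; you instead note that the $a$-coloured copy of Thompson's $F$ in $L$ has derived subgroup $D(F)$ lying inside $K$, that $D(F)$ is simple and non-abelian hence equals its own derived subgroup, so $D(F)=[D(F),D(F)]\subset[K,K]=D(K)$, and then you extract the conjugator from this embedded $D(F)$ using classical piecewise-affine compression. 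Your route is more self-contained (it trades an external transitivity claim for textbook facts about Thompson's $F$), and it also makes explicit the non-abelianness needed to pass from ``simple'' to ``perfect'' via the free subgroup from \ref{eqn:free-embedding}, a step the paper leaves implicit. The paper's route is shorter at the cost of the external citation. Both then finish identically via theorems \ref{theo:Fournier-Facio--Lodha} and \ref{theo:Ghys}.
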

\begin{proof}
By theorem \ref{theo:faithful} the canonical action $\al:G\act\fC$ is faithful, so proposition \ref{prop:D(K)-simple} implies $D(K)$ is simple, and thus perfect. Let $\Delta$ be the subgroup of $D(K)$ generated by some non-trivial elements $\ga_1,\dots,\ga_l\in D(K)$. By definition, each $\ga_i$ fixes a proper clopen  neighbourhood $C_i$ of $\{\ov\0,\ov\1\}$. 
Hence, $C=\cap_{i=1}^lC_i$ is a proper clopen neighbourhood of $\{\ov\0,\ov\1\}$ fixed by every element of $\Delta$. Let $D:=\fC\setminus C$, which is a non-empty clopen set, and let $E$ be a non-empty clopen subset of $C$ that doesn't contain $\ov\0$ and $\ov\1$. By claim 2 of \cite[Thm.~3.5]{Brothier-Seelig24a} there exists $\ga\in D(K)$ sending $D$ into $E$. It follows that any element of $\ga\Delta\ga^{-1}$ commutes with any element of $\Delta$. By theorem \ref{theo:Fournier-Facio--Lodha} we have $H_b^2(D(K);\Z)=0$, so by theorem \ref{theo:Ghys} any OPH action $D(K)\act\bS$ has a global fixed point.
\end{proof}

\subsubsection{Piecewise defined groups and their subgroup structure}\label{sec:piecewise}

Consider the real plane $\R^2$ equipped with its euclidean topology. The quotient of the subspace $\R^2\setminus\{(0,0)\}$ by the equivalence $(x,y)\sim(\lambda x,\lambda y)$, $\lambda\not=0$, is the \emph{real projective plane}, which we denote by $\bP$ equip with the quotient topology. We write the equivalence class of $(x,y)$ by $[x:y]$ and identify the real line $\R$ inside $\bP$ as the set of all points of the form $[x:1]$ and we denote the remaining point $\infty:=[1:0]$. The projective line is homeomorphic to the unit circle $\bS\subset\R^2$ via stereographic projection. As such, $\bP$ inherits a cyclic order (equivalently, an orientation) from $\bS$ and thus the notion of intervals. The group $\SL_2(\R)$ of $2\times2$ real matrices with determinant $1$ acts OPH on $\bP$ via the formula
\begin{align*}
\begin{bmatrix}
a&b\\
c&d
\end{bmatrix}
\cdot[x:y]=[ax+by:cx+dy].
\end{align*}
The kernel of this action is given by $\{\pm I\}$, where $I$ is the identity matrix. Hence, the quotient $\PSL_2(\R):=\SL_2(\R)/\{\pm I\}$ acts faithful OPH on $\bP$. The stabiliser of $\infty$ consists of all elements represented by matrices
\begin{align*}
\begin{bmatrix}
\sqrt{a}&\frac{b}{\sqrt{a}}\\
0&\frac{1}{\sqrt{a}}\\
\end{bmatrix},
\end{align*}
where $a>0$ and $b\in\R$. On $\R\subset\bP$ this matrix acts as the affine map $x\mapsto ax+b$. Hence, the stabiliser of $\infty$ in $\PSL_2(\R)$ is called the (orientation-preserving) \emph{affine group} and denote it by $\Aff_+(\R)$ (the symbol $+$ refers to ``orientation-preserving'').

{\bf Piecewise projective homeomorphisms.} We say a homeomorphism $\ga:\bP\to\bP$ is \emph{piecewise projective} if there are \emph{finitely many} intervals $I_1,\dots,I_n$ covering $\bP$ and elements $\ga_1,\dots,\ga_n\in\PSL_2(\R)$ such that $\ga|_{I_i}=\ga_i|_{I_i}$ for all $i=1,\dots,n$. 
The set $\PP_+(\bP)$ of all piecewise projective orientation-preserving homeomorphisms of $\bP$ is closed under composition, inverses, and contains the identity, so forms a group. 
The stabiliser subgroup of $\infty$ inside $\PP_+(\bP)$ is denoted $\PP_+(\R)$ which is the group of \emph{piecewise projective orientation-preserving homeomorphisms of $\R$}. 

{\bf Piecewise affine homeomorphisms.} Similarly, a homeomorphism $\ga:\bP\to\bP$ is \emph{piecewise affine} if it is piecewise projective but the elements $\ga_1,\dots,\ga_n$ defining the action of $\ga$ can be chosen from $\Aff_+(\R)$. The set of all piecewise affine homeomorphisms of $\bP$ is denoted $\PL_+(\bP)$ and it is also a group under composition. It is denoted $\PL$ instead of $\textnormal{PA}$ for historical reasons --- it stands for ``piecewise linear''. The stabiliser of $\infty$ inside $\PL_+(\bP)$ is denoted $\PL_+(\R)$ which exactly coincides with the usual group of piecewise affine homeomorphisms as defined in \cite{Brin-Squier85}. By definition we have $\PL_+(\R)\subset\PP_+(\R)$ and $\PL_+(\bP)\subset\PP_+(\bP)$. Our obstruction to acting piecewise affine and piecewise projective uses a famous theorem of Brin and Squier pertaining to the subgroup structure of $\PL_+(\R)$, nicely adapted to $\PP_+(\R)$ by Monod, see \cite{Brin-Squier85,Monod13}.

{\bf Connection between groups acting on the projective line and on the circle.} Consider $\bS=\R/\Z$ and write $\phi:[0,1]\to \bS$ for the usual quotient map.
Here, a homeomorphism $\ga:\bS\to\bS$ is called \emph{piecewise affine} if there exist finitely many intervals $I_1,\dots,I_n$ covering $[0,1]$ and satisfying that $\phi^{-1}\gamma\phi$ restricted to any of the $I_j$ is an affine map (hence of the form $x\mapsto ax+b$ with $a,b\in\R$).
We write $\PL_+(\bS)$ for the group of orientation-preserving piecewise affine homeomorphisms of $\bS$. As mentioned, the stereographic projection furnishes a homeomorphism $\varphi:\bP\to\bS$. Since this takes intervals to intervals and affine maps to affine maps, conjugation induces a group isomorphism $\PL_+(\bP)\to\PL_+(\bS)$, hence recovering the more standard group of piecewise affine homeomorphisms of the circle. 
One may then define the so-called group of piecewise projective orientation-preserving homeomorphisms of the circle $\PP_+(\bS)$ as being the conjugated group $\varphi \PP_+(\bP)\varphi^{-1}$ (noting that conjugating by $\varphi$ sends $\PL_+(\bP)$ to $\PL_+(\bS)$), see \cite{Cornulier21} for further details.

\begin{theorem}[Brin--Squier `85, Monod `13]\label{theo:Brin-Squier} Neither $\PL_+(\R)$ nor $\PP_+(\R)$ contain a non-abelian free subgroup.
\end{theorem}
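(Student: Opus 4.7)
The plan is to argue by contradiction, following the strategy of Brin--Squier for $\PL_+(\R)$ and Monod's adaptation for $\PP_+(\R)$. The fundamental observation I would first record is the \emph{commutation-from-disjoint-support} principle: if $\alpha,\beta \in \PL_+(\R)$ (resp.\ $\PP_+(\R)$) satisfy $\supp(\alpha)\cap\supp(\beta)=\varnothing$, then $[\alpha,\beta]=e$. This is immediate from the definition, but it is the entire engine of the argument.

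Assume then that $F=\langle f,g\rangle$ is a non-abelian free subgroup of $\PL_+(\R)$. Since $f$ and $g$ have only finitely many breakpoints, both $\supp(f)$ and $\supp(g)$ are finite unions of open intervals. My goal would be to produce two nontrivial elements $\alpha,\beta\in F$ whose supports are disjoint: the principle above would then give $[\alpha,\beta]=e$, contradicting freeness of $F$.

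To manufacture such $\alpha,\beta$, I would analyze the dynamics of $g$ on the connected components of $\supp(f)$ (and vice versa). By iterating, a high power $g^n$ can be arranged to either push a chosen component of $\supp(f)$ off itself or to fix it pointwise on a suitable sub-interval; in both cases one extracts a nontrivial element of $F$ whose support is strictly smaller than that of $f$ or $g$. This suggests an induction on a complexity invariant — say the total number of breakpoints appearing in any finite generating set, or the number of components of the union of supports. At each inductive step one replaces a generator by a commutator or a conjugate whose complexity is strictly less, eventually reducing to the single-interval case, where a classical analysis shows that two elements of $\PL_+(I)$ both supported on a single interval generate a (metabelian, hence non-free) group.

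The main obstacle I anticipate is setting up the induction so that the complexity genuinely decreases while the freeness of the subgroup is preserved at each stage — one must be careful that the new pair of elements one produces still generates a non-abelian free group. For the extension to $\PP_+(\R)$, which is the content of Monod's adaptation, the additional difficulty is that piecewise $\PSL_2(\R)$-homeomorphisms are less rigid than piecewise affine ones, so that the local structure on a single interval is richer. Monod's key insight, which I would invoke, is that despite this added flexibility the disjoint-support commutation still holds and the stabilizer of a sub-interval in $\PSL_2(\R)$ remains a solvable (affine) group, so the base case of the Brin--Squier induction survives, and the whole scheme can be transferred.
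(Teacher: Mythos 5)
The paper cites this result from Brin--Squier and Monod without reproducing a proof, so there is no internal argument to compare yours against; what follows assesses your sketch against the published arguments.

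Your opening observation --- that disjointly supported elements of $\PL_+(\R)$ or $\PP_+(\R)$ commute --- is indeed the engine of the argument, and your goal of manufacturing two non-commuting words in $f,g$ that nonetheless have disjoint supports is the right one. But two parts of the route you propose do not hold up. First, your base case is false: two elements of $\PL_+$ supported in a single compact interval can already generate Thompson's group $F$, which is not metabelian (its derived subgroup is an infinite simple group), so ``single-interval implies metabelian'' gives you nothing to bottom the induction out on. Second, the quantity you want to decrease is not obviously monotone: commutators and conjugates can acquire \emph{more} breakpoints than the elements you started with, and nothing in your sketch controls this; you also flag, but do not resolve, the issue that the new pair you produce may fail to generate a rank-two free group.

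The ingredient your sketch is missing --- and which is what Brin--Squier actually exploit --- is the structure of the \emph{germ group at a boundary point of the support}. If $(a,b)$ is a component of $\supp\langle f,g\rangle$, then every element of $\langle f,g\rangle$ fixes $a$ and $b$ and acts affinely on one-sided neighbourhoods of them, so the one-sided germ homomorphism at $a$ (and at $b$) lands in the multiplicative group $\R_{>0}$ of slopes, which is abelian. Consequently $c=[f,g]$ has trivial germ at both endpoints, meaning $\supp(c)\cap(a,b)$ is \emph{compactly contained} in $(a,b)$. One then notes that the $\langle f,g\rangle$-orbit of any interior point of a support component must accumulate at both endpoints (a point where it failed to accumulate would be a fixed point inside the support, a contradiction), so some $w\in\langle f,g\rangle$ pushes $\supp(c)\cap(a,b)$ entirely off itself within $(a,b)$. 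A short bookkeeping argument over the finitely many components of the support then produces a non-trivial relation $[c,wcw^{-1}]=e$ that cannot hold in a free group; no induction on breakpoints is needed.

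Monod's adaptation for $\PP_+(\R)$ enters exactly at the germ step, and this is where your phrasing is slightly off. A piecewise projective homeomorphism fixing an endpoint $p$ of its support acts near $p$ by a M\"obius transformation fixing $p$; the stabiliser of a \emph{point} (not a sub-interval) in $\PSL_2(\R)$ is conjugate to $\Aff_+(\R)$, which is \emph{not} abelian but is metabelian. So the one-sided germ group at a support endpoint is metabelian rather than abelian, and one must pass to the second derived subgroup (an iterated commutator) before the germs at endpoints become trivial and the support becomes compactly contained; from there the same dynamics goes through. Your intuition that solvability of the affine stabiliser is the saving grace is correct, but it acts through the germ computation at support endpoints, not as a base case of a breakpoint-counting induction.
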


\subsection{Proof of the main result}\label{sec:main-theorem} We are now able to prove the \ref{theo:main}. Fix $n\geqslant3$ and let $\cF_n,G_n,\al_n:G_n\act\fC,\ga_n:G_n\act\bS,K_n$ be as defined throughout the above section. As usual we drop $n$ subscripts if it is clear from context.

\subsubsection{Simple groups of type $\mathrm{F}_\infty$ acting by orientation-preserving homeomorphisms on the circle} By theorem \ref{theo:faithful}, the canonical action $\al:G\act\fC$ is faithful, so by theorem \ref{theo:simple} we deduce that $D(G)$ is simple. Hence, the rigid action $\ga:G\act\bS$ is faithful OPH (i.e.,~act by orientation-preserving homeomorphisms). Since $\cF=\FS\la a,b|\tau(a)=\rho(b)\ra$, where $\tau(a)$ and $\rho(b)$ both have $n$ carets, theorem \ref{theo:abelianisation} implies the abelianisation of $G$ is $\Z_n$. Since $\Z_n$ is finite and $G$ is of type $\mathrm{F}_\infty$ by theorem \ref{theo:finiteness-properties}, it follows that $D(G)$ is of type $\mathrm{F}_\infty$ also, see \cite[Cor.~7.2.4]{Geoghegan07}. In particular, $D(G)$ is finitely presented.

\subsubsection{No piecewise projective actions} For a contradiction, suppose $\eta:D(G)\act\bP$ is a non-trivial piecewise projective action. Since $D(G)$ is simple, it follows that $\eta$ is faithful. It follows that $\eta$ restricts into a faithful piecewise projective action $D(K)\act\bP$. This is (conjugate to) a faithful OPH action on the circle, so by proposition \ref{prop:global-fixed-point}, the action has a global fixed point. Up to conjugating the action by an element of $\PSL_2(\R)$ (which keeps the action piecewise projective) we may assume that $\infty$ is a global fixed point for $D(K)\act\bP$. As the action is faithful, and $\PP_+(\R)\subset\PP_+(\bP)$ is the stabiliser of $\infty$, we deduce a faithful action $D(K)\act\R$ by piecewise projective homeomorphisms. However, the embedding \ref{eqn:free-embedding} of a non-abelian free group inside $D(K)$ implies a non-abelian free group embeds inside $\PP_+(\R)$ via the faithful action $D(K)\act\R$. This contradicts theorem \ref{theo:Brin-Squier}, and so there cannot be any non-trivial piecewise projective actions $\eta:D(G)\act\bP$.

\subsubsection{Infinitely many isomorphism classes} 

Finally we are going to show $D(G_n)$ and $D(G_m)$ are isomorphic only if $n=m$. 
We describe all the groups of germs for the actions $\ga_n:D(G_n)\act \bS$. Recall that $\Q_2$ denotes the dyadic rationals inside $\bS=\R/\Z$ and let $\Q$ be the rational points inside $\bS$.
Firstly, from propositions \ref{prop:canonical-germ} and \ref{prop:rigid-germ} we deduce
\begin{align}\label{eqn:germ-list}
[D(G_k)]_x\simeq
\begin{cases}
\Z\times\Ga_k^+&\textnormal{if $x\in\Q_2$,}\\
\Z&\textnormal{if $x\in\Q\setminus\Q_2$,}\\
\{e\}&\textnormal{if $x\in\bS\setminus\Q$,}
\end{cases}
\end{align}
where $\Ga^+_k=\Gr\la a,b|a^2=b^k\ra$ for all $k\geqslant3$. Observe the abelianisation of $\Z\times\Ga^+_k$ is $\Z\times\Z$, hence $\Z\times\Ga^+_k$ cannot be isomorphic to $\Z$ nor to $\{e\}$. Fix $n,m\geqslant3$, and suppose $\phi:D(G_n)\to D(G_m)$ is an isomorphism. By theorem \ref{theo:rigid} the actions $\ga_n:D(G_n)\act\bS$ and $\ga_m:D(G_m)\act\bS$ we constructed in section \ref{sec:dynamics} are McCleary--Rubin rigid. Hence, by theorem \ref{theo:C-rigid} there exists a unique homeomorphism $\Phi:\bS\to\bS$ satisfying $\ga_m(\phi(g))=\Phi\circ\ga_n(g)\circ\Phi^{-1}$ for all $g\in D(G_n)$. As a result, we deduce an isomorphism
\begin{align*}
\phi_x:[D(G_n)]_x\to[D(G_m)]_{\Phi(x)},~[g]_x\mapsto[\Phi\circ g\circ\Phi^{-1}]_{\Phi(x)}
\end{align*}
between groups of germs for every $x\in\bS$. Suppose $x\in\Q_2$. The list of groups of germs \ref{eqn:germ-list} and the above observation forces $\Phi(x)\in\Q_2$. Hence, we deduce an isomorphism
\begin{align*}
\phi_x:\Z\times\Ga_n^+\to\Z\times\Ga_m^+.
\end{align*}
Now, the centre of a direct product is the direct product of centres. Hence, $Z(\Z\times\Ga^+_n)=Z(\Z)\times Z(\Ga^+_n)=\Z\times Z(\Ga^+_n)$. Moreover, the centre of $\Ga^+_n$ is the subgroup generated by $a^2=b^n$, so we deduce $\Z\times\Ga^+_n/Z(\Z\times\Ga^+_n)\simeq\Ga_n^+/Z(\Ga_n^+)\simeq\Z_2*\Z_n$. As the centre of a group is a characteristic subgroup, $\phi_x$ passes to an isomorphism between quotients
\begin{align*}
\ov\phi_x:\Z_2*\Z_n\to\Z_2*\Z_m.
\end{align*}
Finally, taking abelianisations, we deduce $\Z_2\times\Z_n\simeq\Z_2\times\Z_m$, so $n=m$. This completes the proof of the \ref{theo:main}.

\subsection{Comparison with other groups}
We finish the article by giving a comparison between our groups and two other classes of groups.

\subsubsection{Lodha's group}

As mentioned in the introduction, Lodha defined a variant $\Lambda$ of the Lodha--Moore groups that acts naturally on $\bP$ by piecewise projective homeomorphisms, and showed it was finitely presented, simple, and admits no non-trivial piecewise affine actions \cite{Lodha-Moore16,Lodha19}. One quick definition of $\Lambda$ is that it is generated by a natural copy of Thompson's group $T$ inside $\PP_+(\bP)$ together with $c\in\PP_+(\R)$ defined by
\begin{align*}
c(x):=
\begin{cases}
x&\textnormal{if $x\leqslant0$,}\\
\frac{2x}{1+x}&\textnormal{if $0\leqslant x\leqslant1$,}\\
\frac{2}{3-x}&\textnormal{if $1\leqslant x\leqslant2$,}\\
x&\textnormal{if $x\geqslant2$,}
\end{cases}
\end{align*}
see \cite{Lodha-Zaremsky23} where $\Lambda$ is denoted $S$. 
Since $T$ contains non-abelian free subgroups, so does $\Lambda$.
It was later shown by Lodha and Zaremsky that $\Lambda$ is of type $\mathrm{F}_\infty$, see \cite[Thm.~6.3]{Lodha-Zaremsky23}. Initially, we were only interested in non-existence of piecewise affine actions for FS groups, but after becoming aware of Lodha's work we investigated piecewise projective actions and it turned out our obstruction was strong enough to obstruct piecewise projective actions also. Despite $\Lambda$ admitting a piecewise projective action, the proof that $\Lambda$ does not admit a piecewise affine action shares similarities with ours. 
Lodha's piecewise affine obstruction is due to a specific embedding $BS(1,2)\into\Lambda$, where $BS(m,n):=\Gr\la a,b|ba^mb^{-1}=a^n\ra$ is the $(m,n)$-Baumslag--Solitar group, see \cite[Prop.~5.1]{Lodha19}. 
The argument goes: if $\eta:\Lambda\act\bS$ were a piecewise affine action, then by a theorem of Guelman--Liousse \cite[Prop.~2.3]{Guelman-Liousse11}, there exists an embedding $\iota:BS(1,2^m)\into\Lambda$ (for a certain $m\geqslant 1$) for which $\iota(BS(1,2^m))\act \bS$ has a global fixed point under $\eta$. Hence, $\eta$ induces a piecewise affine action of $BS(1,2^m)$ on the unit interval $[0,1]$, however no such action exists due to a variant of the Brin--Squier theorem due to Guba--Sapir \cite[Cor.~23]{Guba-Sapir99}. 
Using similar arguments, Lodha shows $\Lambda$ does not admit a $C^1$ (continuously differentiable) action on $\bS$. This is the first finitely presented simple group witnessing this behaviour. We currently do not know any FS groups that cannot admit a $C^1$ action on $\bS$. 
However, note all $n$-ary Higman--Thompson groups $F_n$ can be realised as FS groups, and these groups admit a $C^\infty$ action on $\bS$ due to Ghys--Sergiescu \cite{Ghys-Sergiescu87}. 
On the geometric side of things, the embedding $BS(1,2)\into\Lambda$ implies $\Lambda$ cannot act properly on a CAT(0) cube complex by \cite[Thm.~1.2]{Haglund23}. Though, it is still unknown if $\Lambda$ has the Haagerup property (which is equivalent of having Gromov's a-T-menability and is a consequence of acting properly on a CAT(0) cube complex). At this moment we still don't have much insight regarding FS groups and properties such as acting $C^1$ on the circle, acting properly on a CAT(0) cube complex, or having the Haagerup property.

\subsubsection{Finite germ extensions}\label{sec:FGE}

As discussed in subsection \ref{sec:graph}, the rigid action $\ga:G_3\act \bS$ produces a homeomorphism $\ga(g)$ of $\bS$ that acts piecewise affine except around the point $1$ and having infinitely many breakpoints which accumulate at $1.$ In fact, taking any element $g\in G_3$ we obtain a homeomorphism $\alpha(g)$ with possibly infinitely many breakpoints accumulating in a \emph{finite} subset $X(g)\subset \bS$ so that $\alpha(g)$ is locally affine on the complement of $X(g)$. This behaviour is strikingly similar to the axioms defining \emph{finite germ extensions}, recently introduced by Belk, Hyde, and Matucci, \cite{Belk-Hyde-Matucci24}. Given a Hausdorff space $X$ and groups $B\subset\Ga\subset\Homeo(X)$, one calls $\Ga$ a finite germ extension (FGE) over $B$ if
\begin{itemize}
\item every element of $\Ga$ acts locally like some element of $B$ everywhere in $X$ except at \emph{finitely many} ``singular points'';
\item $B$ can be characterised as the subgroup of $\Ga$ whose elements have no singular points; and
\item for any $\ga\in\Ga$ and a singular point $x$ of $\ga$ there exists $\ga'\in\Ga$ whose \emph{unique} singular point is $x$ and that acts locally the same as $\ga$ at $x$. 
\end{itemize}	
In \cite{Belk-Hyde-Matucci24}, the authors deduce results for FGE's about their finiteness properties, simplicity, abelianisation, and also realise many existing groups as FGE's, for instance, Cleary's irrational slope Thompson groups and R\"{o}ver--Nekrashevych groups coming from bounded automata groups. With this new perspective the authors resolve a conjecture of Nekrashevych regarding the finiteness properties of the aforementioned class of R\"{o}ver--Nekrashevych groups. Moreover, the authors produce two new type $\mathrm{F}_\infty$ (in particular, finitely presented) simple groups $T\cA$ and $V\cA$ as FGE's which contain every countable abelian group, hence resolving the Boone--Higman conjecture for countable abelian groups. As hinted at above, each member of our family of examples $(G_n)_{n\geqslant3}$ can be realised as FGE's acting on the circle over their respective subgroups of $a$-coloured diagrams (isomorphic to Thompson's group $T$). Hence, $B=T$, $\Ga$ is our FS group $G_n$, $X=\bS$, and the singular points of $g\in G_n$ is the set $X(g)$ of accumulation points of the breakpoints. However, we do not know if every FS group (with faithful canonical action) can be realised in this way. Detailed proofs of these claims above will appear in a future article. We are hopeful that studying certain FS groups as FGE's may reveal new properties of FS groups and that these connections may be beneficial to both theories.

\newcommand{\etalchar}[1]{$^{#1}$}

\end{document}